\newtheorem{lemma}{Lemma}[section]
\newtheorem{theorem}[lemma]{Theorem}
\newtheorem{corollary}[lemma]{Corollary}
\newtheorem{proposition}[lemma]{Proposition}
\theoremstyle{definition}
\numberwithin{equation}{section}
\DeclareMathOperator{\Be}{Be}
\DeclareMathOperator{\BI}{BI}
\DeclareMathOperator{\sym}{sym}
\begin{document}

\newcommand{\ZZ}{\mathbb{Z}}
\newcommand{\ZZd}{\mathbb{Z}^{d}}
\newcommand{\RR}{\mathbb{R}}
\newcommand{\RRd}{\mathbb{R}^{d}}
\newcommand{\PP}{\mathbb{P}}
\newcommand{\QQ}{\mathbb{Q}}
\newcommand{\EE}{\mathbb{E}}
\newcommand{\mB}{\mathcal{B}}
\newcommand{\mC}{\mathcal{C}}
\newcommand{\mD}{\mathcal{D}}
\newcommand{\mE}{\mathcal{E}}
\newcommand{\mF}{\mathcal{F}}
\newcommand{\mG}{\mathcal{G}}
\newcommand{\mH}{\mathcal{H}}
\newcommand{\mI}{\mathcal{I}}
\newcommand{\mJ}{\mathcal{J}}
\newcommand{\mL}{\mathcal{L}}
\newcommand{\mM}{\mathcal{M}}
\newcommand{\mkN}{\mathfrak{N}}
\newcommand{\mO}{\mathcal{O}}
\newcommand{\mQ}{\mathcal{Q}}
\newcommand{\mR}{\mathcal{R}}
\newcommand{\mS}{\mathcal{S}}
\newcommand{\mT}{\mathcal{T}}
\newcommand{\mU}{\mathcal{U}}
\newcommand{\mW}{\mathcal{W}}
\newcommand{\mY}{\mathcal{Y}}
\newcommand{\bs}{\backslash}
\newcommand{\half}{\frac{1}{2}}
\newcommand{\fN}{\frac{1}{N}}
\newcommand{\bx}{\mathbf{x}}
\newcommand{\by}{\mathbf{y}}
\newcommand{\bV}{\mathbf{V}}
\newcommand{\olV}{\overline{V}}
\newcommand{\olm}{\overline{m}}
\newcommand{\olf}{\overline{f}}
\newcommand{\olS}{\overline{S}}
\newcommand{\ep}{\epsilon}
\newcommand{\DM}{\Delta_{\max}}
\newcommand{\Dm}{\Delta_{\min}}
\newcommand{\tn}{\tilde{n}}
\newcommand{\tc}{\tilde{\chi}}
\newcommand{\tP}{\tilde{P}}

\title[Bessel-like random walks]{Excursions and local limit theorems for Bessel-like random walks}
\author{Kenneth S. Alexander}
\address{Department of Mathematics KAP 108\\
University of Southern California\\
Los Angeles, CA  90089-2532 USA}
\email{alexandr@usc.edu}
\thanks{This research was supported by NSF grant DMS-0804934.}

\keywords{excursion, Lamperti problem, random walk, Bessel process}
\subjclass[2010]{Primary: 60J10; Secondary: 60J80}

\begin{abstract}
We consider reflecting random walks on the nonnegative integers with drift of order $1/x$ at height $x$.  We establish explicit asymptotics for various probabilities associated to such walks, including the distribution of the hitting time of 0 and first return time to 0, and the probability of being at a given height $k$ at time $n$ (uniformly in a large range of $k$.)  In particular, for drift of form $-\delta/2x + o(1/x)$ with $\delta>-1$, we show that the probability of a first return to 0 at time $n$ is asymptotically $n^{-c}\varphi(n)$, where $c = (3+\delta)/2$ and $\varphi$ is a slowly varying function given in terms of the $o(1/x)$ terms.
\end{abstract}
\maketitle

\section{Introduction}
We consider random walks on $\ZZ_+ = \{0,1,2,\dots\}$, reflecting at 0, with steps $\pm 1$ and transition probabilities of the form
\begin{equation} \label{Bessellike}
  p(x,x+1) = p_x =  \half\left( 1 - \frac{\delta}{2x} + o\left( \frac{1}{x} \right) \right) \ \text{as } x \to \infty, \quad p(x,x-1) = q_x
    = 1-p_x,
  \end{equation}
for $x \geq 1$.  We call such processes \emph{Bessel-like walks}, as their drift is asymptotically the same as that of a Bessel process of (possibly negative) dimension $1-\delta$.   We call $\delta$ the \emph{drift parameter}.  Bessel-like walks are a special case of what is called the Lamperti problem---random walks with asymptotically zero drift.  A Bessel-like walk is recurrent if $\delta > -1$, positive recurrent if $\delta>1$, and transient if $\delta < -1$; for $\delta=-1$ recurrence or transience depends on the $o(1/x)$ terms.  Here we consider the recurrent case, with primary focus on $\delta>-1$, as the case $\delta=-1$ has additional complexities which weaken our results.  Bessel-like walks arise for example when (reflecting) symmetric simple random walk (SSRW) is modified by a potential proportional to $\log x$.  

Bessel-like walks have been extensively studied since the 1950's.  Hodges and Rosenblatt \cite{HR53} gave conditions for finiteness of moments of certain passage times, and Lamperti \cite{La62} established a functional central limit theorem (with non-normal limit marginals) for $\delta<1$; for $-1<\delta<1$ our Theorem \ref{location} below is a local version of his CLT.  
In \cite{La63} Lamperti related the first and second moments of the step distribution to finiteness of integer moments of first-return-time distributions.  He worked with a wider class of Markov chains  with drift of order $1/x$, showing in particular that for return times of Bessel-like walks, moments of order less than $\kappa = (1+\delta)/2$ are finite while those of order greater than $\kappa$ are infinite.  Lamperti's results were generalized and extended to noninteger moments in \cite{AI97},  \cite{AIM96}, and to expected values of more general functions of return times in \cite{AI99}. 
``Upper and lower'' local limit theorems were established in \cite{MP95} for certain positive recurrent processes which include our $\delta>1$.
Bounds for the growth rate of processes with drift of order $1/x$ were given in \cite{MVW08}, and the domain of attraction of the excursion length distribution was examined in \cite{Fa73}.

Karlin and McGregor (\cite{KM57a}, \cite{KM57b}, \cite{KM59}) showed that, for general birth-death processes, many quantities of interest could be expressed in terms of a family of polynomials orthogonal with respect to a measure on $[-1,1]$.  This measure can in principle be calculated (see Section 8 of \cite{KM57b}) but not concretely enough, apparently, for some computations we will do here.  An exception is the case of $p_x = \half(1 - \frac{\delta}    {2x+\delta}    )$ considered in \cite{De01} (for $\delta=1$) and \cite{DDH08}; we will call this the \emph{rational-form case}.  Birth-death processes dual to the rational form case were considered in \cite{Ro66}.  Further results for birth-death processes via the Karlin-McGregor representation are in \cite{CV98}, \cite{Fa81}.  

Our interest in Bessel-like walks originates in statistical physics.  These walks were used in \cite{DDH09} in a model of wetting.  Additionally, in polymer pinning models of the type studied in \cite{Gi07} and the references therein, there is an underlying Markov chain which interacts with a potential at times of returns to 0.  The location of the $i$th monomer is given by the state of the chain at time $i$.  There may be quenched disorder, in the form of random variation in the potential as a function of the time of the return.  Let $\tau_0$ denote the return time to 0 for the Markov chain started at 0.  For many models of interest, e.g. SSRW on $\ZZ^d$, the distribution of $\tau_0$ for the underlying Markov chain has a power-law tail:  
\begin{equation} \label{powerlaw}
  P(\tau_0 = n) = n^{-c}\varphi(n)
  \end{equation}
for some $c \geq 1$ and slowly varying $\varphi$.  Considering even $n$, for $d=1$ one has $c=3/2$ and $\varphi(n)$ converging to $\sqrt{2/\pi}$; for $d=2$ one has $c=1$ and $\varphi(n)$ proportional to $(\log n)^{-2}$ \cite{JP72}; for $d \geq 3$ one has $c=d/2$ and $\varphi(n)$ asymptotically constant.  In general the value of $c$ is central to the critical behavior of the polymer with the presence of the disorder altering the critical behavior for $c>3/2$ but not for $c<3/2$ (\cite{Al08},\cite{AZ08},\cite{GT05}.)  In the ``marginal'' case $c=3/2$, the slowly varying function $\varphi$ determines whether the disorder has such an effect \cite{GLT09}. As we will see, for Bessel-like walks, \eqref{powerlaw} holds in the approximate sense that
\begin{equation} \label{powerlaw2}
  P(\tau_0 = n) \sim n^{-c}\varphi(n) \quad \text{as } n \to \infty,
  \end{equation} 
with $c=(3+\delta)/2$ and $\varphi(n)$ determined explicitly by the $o(1/x)$ terms.  Here $\sim$ means the ratio converges to 1.  Thus Bessel-like walks provide a single family of Markov chains in $(1+1)$-dimensional space-time in which \eqref{powerlaw} can be realized (at least asymptotically) for arbitrary $c$ and $\varphi$.

A related model is the directed polymer in a random medium (DPRM), in which the underlying Markov chain is generally taken to be SSRW on $\ZZ^d$ and the polymer encounters a random potential at every site, not just the special site 0.  The DPRM has been studied in both the physics literature (see the survey \cite{HZ95}) and the mathematics literature (see e.g.~\cite{CH02}, \cite{CY07}, \cite{La10}.)  In place of SSRW, one could use a Markov chain on $\ZZ^d$ in which each coordinate is an independent Bessel-like walk.  In this manner one could study the effect on the DPRM of the behavior \eqref{powerlaw2}, or more broadly, study the effect of the drift present in the Bessel-like walk.  As with the pinning model, via Bessel-like walks, all drifts and all tail exponents $c$ (not just the half-integer values occurring for SSRW) can be studied using the same space of trajectories.  This will be pursued in future work.

For the DPRM, an essential feature is the overlap, that is, the value
\[
  \sum_{i=1}^N \delta_{\{X_i=X_i'\}},
  \]
where $\{X_i\}, \{X_i'\}$ are two independent copies of the Markov chain; see (\cite{CH02}, \cite{CY07}, \cite{La10}.)  To determine the typical behavior of the overlap one should know the probabilities $P(X_i=y), y \in \ZZ^d$, as precisely as possible, with as much uniformity in $y$ as possible..

For this paper we thus have two goals: given the transition probabilities $p_x,q_x$ of a Bessel-like walk, determine 
\begin{itemize}
\item[(i)] the value $c$ and slowly varying function $\varphi$ for which \eqref{powerlaw2} holds, and 
\item[(ii)] the probabilities $P(X_i=y), y \in \ZZ$, asymptotically as $i \to \infty$, as uniformly in $y$ as possible.
\end{itemize}
We will not make use of the methods of Karlin and McGregor (\cite{KM57a}, \cite{KM57b}, \cite{KM59}) due to the difficulty of calculating the measure explicitly enough, and obtaining the desired uniformity in $y$.  Instead we take a more probabilistic approach, comparing the Bessel-like walk to a Bessel process with the same drift, while the walk is at high enough heights.  This leads to estimates of probabilities of form $P(\tau_0 \in [a,b])$ when $a/b$ is bounded away from 1.  Then to obtain \eqref{powerlaw2} we use special coupling properties of birth-death processes which force regularity on the sequence $\{P(\tau_0 = n), n \geq 1\}$.  These properties, given in Lemma \ref{latticepath} and Corollary \ref{latticepath2}, may be of some independent interest.

\section{Main Results}
Consider a Bessel-like random walk $\{X_n\}$ on the nonnegative integers with drift parameter $\delta \geq -1$, with transition probabilities $p_x = p(x,x+1),q_x=p(x,x-1)=1-p_x$.  The walk is reflecting, i.e. $p_0=1$.  We assume uniform ellipticity:  there exists $\ep>0$ for which 
\begin{equation} \label{elliptic}
  p_x,q_x \in [\ep,1-\ep] \quad \text{for all } x \geq 1.
  \end{equation}
Define $R_x$ by 
\begin{equation} \label{pxRx}
  p_x = \half\left( 1 - \frac{\delta}{2x} + \frac{R_x}{2} \right),
  \end{equation}
where $R_x = o(1/x)$.  Note that in the rational-form case we have
\[
  R_x = \frac{\delta^2}{2x^2} + O\left( \frac{1}{x^3} \right).
  \]
The drift at $x$ is
\[
  p_x - q_x = 2p_x-1 = -\frac{\delta}{2x} + \frac{R_x}{2}.
  \]
Let $\lambda_0 = 1, M_0=0$ and for $x \geq 1$,
\[
  \lambda_x = \prod_{k=1}^x \frac{q_k}{p_k}, \quad M_x = \sum_{k=0}^{x-1} \lambda_k, 
    \quad L(x) = \exp\left( R_1 + \dots + R_x\right).
  \]
$M_x$ is the scale function.  Note $M_1=1$, and $M_{X_n \wedge \tau_0}$ is a martingale.  It is easily checked that the assumption $R_x = o(1/x)$ ensures $L$ is slowly varying.  By linearly interpolating between integers, we can extend $L$ to a function on $[1,\infty)$ which is still slowly varying.  Let $\tau_j$ be the hitting time of $j \in \ZZ_+$, let $P_j$ denote probability for the walk started from height $j$ and let 
\begin{equation} \label{Hdef}
  H = \max \{X_i: i \leq \tau_0\}
  \end{equation}
be the height of an excursion from 0.  From the martingale property we have
\begin{equation} \label{height}
  P_0(H\geq h) = P_1(\tau_h<\tau_0) = \frac{M_1}{M_h} 
  \end{equation}
so since $M_1=1$,
\[
  P_0(H=h) =  \frac{M_1}{M_h} -  \frac{M_1}{M_{h+1}} = \frac{\lambda_h}{M_h M_{h+1}}.
  \]
In place of $\delta$, a more convenient parameter is often
\[
  \kappa = \frac{1+\delta}{2} \geq 0.
  \]
We have
\[  
  \frac{p_x}{q_x} = 1 - \frac{\delta}{x} + R_x + O\left( \frac{1}{x^2} \right),
  \]
and hence
\begin{equation} \label{lambda}
  \lambda_x \sim K_0x^{2\kappa-1}L(x)^{-1} \ \text{as $x \to \infty$, for some } K_0>0,
  \end{equation}
so for $\kappa>0$,
\begin{equation} \label{Mxapprox}
  M_x \sim \frac{K_0}{2\kappa} x^{2\kappa} L(x)^{-1}.
  \end{equation} 
Our assumption of recurrence is equivalent to $M_x \to \infty$.

Define the slowly varying function
\[
  \nu(n) = \sum_{l \leq n,\ l \text{ even}} \frac{1}{lL(\sqrt{l})}.
  \]

Throughout the paper, $K_0, K_1,\dots$ are constants which depend only on $\{p_x, x \geq 1\}$, except as noted; for example, $K_i(\theta,\chi)$ means that $K_i$ depends on some previously-specified $\theta$ and $\chi$.  Further, to avoid the notational clutter of pervasive integer-part symbols, we tacitly assume that all indices which appear are integers, as may be arranged by slightly modifying various arbitrarily-chosen constants, or more simply by mentally inserting the integer-part symbol as needed.

\begin{theorem} \label{tau0tail}
Assume \eqref{pxRx} and \eqref{elliptic}.  For $\delta>-1$, 
\begin{equation} \label{exctail}
  P_0(\tau_0 \geq n) \sim \frac{2^{1-\kappa}}{K_0 \Gamma(\kappa)} n^{-\kappa} L(\sqrt{n}) \quad \text{as } n \to \infty,
  \end{equation}
and for $n$ even,
\begin{equation} \label{excpoint}
  P_0(\tau_0 = n) \sim \frac{2^{2-\kappa}\kappa}{K_0 \Gamma(\kappa)} n^{-(\kappa+1)} L(\sqrt{n}).
  \end{equation}
For $\delta = -1$, assuming recurrence (i.e. $M_x \to \infty$ as $x \to \infty$),
\begin{equation} \label{exctailsv}
  P_0(\tau_0 \geq n) \sim \frac{1}{K_0 \nu(n)}.
  \end{equation}
\end{theorem}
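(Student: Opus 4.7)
The plan is to exploit the explicit identity $P_0(H\ge h)=1/M_h$ from \eqref{height} together with the asymptotic $M_h\sim(K_0/2\kappa)h^{2\kappa}/L(h)$ in \eqref{Mxapprox}, and to compare the Bessel-like walk at sufficiently large heights to a Bessel process of dimension $1-\delta=2(1-\kappa)$. The limiting Bessel process has first-passage density from $x$ given by
\[
f_x^B(t)=\frac{x^{2\kappa}}{2^\kappa\Gamma(\kappa)}\,t^{-(\kappa+1)}e^{-x^2/(2t)},
\]
so its hitting time of $0$ from height $h$ lives on the time scale $h^2$. This dictates the matching $h\asymp\sqrt{n}$ that links the tails of $\tau_0$ and $H$.

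For $\delta>-1$, I would fix $0<a<b$ with $b/a$ bounded away from $1$ and decompose
\[
P_0(\tau_0\in[an,bn])=P_0(\tau_0\in[an,bn],\tau_{h_-}<\tau_0)+P_0(\tau_0\in[an,bn],H<h_-),
\]
where $h_-=\ep\sqrt{n}$ for a small $\ep>0$. The second term is controlled by the probability that a walk confined to $[0,h_-]$ has not yet exited by time $an$; a principal-eigenvalue estimate using \eqref{elliptic} gives a bound of order $e^{-ca/\ep^2}$, which is killed by letting $\ep\to 0$ after $n\to\infty$. For the main term, the strong Markov property at $\tau_{h_-}$ reduces matters to $P_{h_-}(\tau_0\in[an,bn])$, and a quantitative Bessel comparison yields
\[
P_{h_-}(\tau_0\in[an,bn])\sim\int_{an}^{bn}f_{h_-}^B(t)\,dt\sim\frac{\ep^{2\kappa}(a^{-\kappa}-b^{-\kappa})}{2^\kappa\Gamma(\kappa+1)}
\]
as $\ep\to 0$. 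Multiplying by $1/M_{h_-}\sim 2\kappa L(\sqrt n)/(K_0\ep^{2\kappa}n^\kappa)$, the factor $\ep^{2\kappa}$ cancels and produces the clean interval estimate
\[
P_0(\tau_0\in[an,bn])\sim\frac{2^{1-\kappa}}{K_0\Gamma(\kappa)}\,n^{-\kappa}L(\sqrt n)(a^{-\kappa}-b^{-\kappa}),
\]
from which \eqref{exctail} follows by taking $b\to\infty$.

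For the pointwise statement \eqref{excpoint}, the interval estimate alone does not suffice since $b/a\to 1$ when $[an,bn]=[n,n+2]$. Here one invokes the birth-death regularity of Lemma \ref{latticepath} and Corollary \ref{latticepath2} to smooth the sequence $\{P_0(\tau_0=n)\}$, justifying the discrete differencing $P_0(\tau_0=n)\sim(2\kappa/n)P_0(\tau_0\geq n)$ and yielding the stated constant $2^{2-\kappa}\kappa/(K_0\Gamma(\kappa))$. For $\delta=-1$ the Bessel analogue has dimension $2$ and does not hit $0$, so the Bessel formula degenerates; however, the conditional probability that a walk from $\sqrt{n}$ avoids $0$ through time $n$ tends to $1$, so $P_0(\tau_0\geq n)\sim P_0(H\geq\sqrt n)=1/M_{\sqrt n}$, and the equivalence $M_{\sqrt n}\sim K_0\nu(n)$ follows by an integral-sum comparison using $\lambda_x\sim K_0/(xL(x))$ and the substitution $l=k^2$ in the definition of $\nu$. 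The principal difficulty is the quantitative Bessel comparison itself: controlling the error in $P_{h_-}(\tau_0\in[an,bn])\sim\int_{an}^{bn}f_{h_-}^B(t)\,dt$ uniformly over the relevant range of parameters, which requires careful use of \eqref{pxRx} and \eqref{elliptic}.
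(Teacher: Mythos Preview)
Your outline for \eqref{exctail} matches the paper's: decompose according to whether the excursion reaches a height $h=\ep\sqrt{n}$, use a strip-confinement bound to kill the low-height piece, and compare $P_h(\tau_0\in[an,bn])$ to the Bessel hitting-time density via coupling. The paper carries this out as a six-term sandwich of inequalities (passing through the imbedded walk in the Bessel process as an intermediate), but the architecture is the same.

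For \eqref{excpoint} your idea is right but you point to the wrong lemma. Lemma~\ref{latticepath} and Corollary~\ref{latticepath2} compare $P_k(\tau_0\in\cdot)$ across different \emph{starting heights} $k$; they do not by themselves give regularity of $n\mapsto P_0(\tau_0=n)$ at the fixed height $0$. What the paper actually uses is a separate path-switching lemma (Lemma~\ref{convexity}) proving log-convexity, $f_m^2\le f_{m+k}f_{m-k}$, by crossing two excursions from $0$ started at \emph{different times}. This yields $f_n\le A_{n-k,n+k}$ (the window average), which together with the interval estimate gives the upper bound in \eqref{excpoint}; the lower bound is obtained by assuming $f_n<(1-\theta)A_{n-k,n-2}$ and deriving a contradiction with \eqref{exctail}. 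The paper's introduction somewhat misleadingly points to Lemma~\ref{latticepath} here, so the confusion is understandable, but the operative tool is log-convexity in $n$, not the height-comparison inequality.

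For \eqref{exctailsv} you take a genuinely different route. The paper does not attempt a Bessel comparison at $\delta=-1$ at all; instead it invokes the Dette--Fill--Pitman--Studden duality $P_0(\tau_0>n)=\tilde P_0(X_n=0)$, where $\tilde P$ is the walk with $p_x,q_x$ swapped. The dual has drift parameter $\tilde\delta=1$, so one applies \eqref{excpoint} (already proved for $\delta>-1$) and the Erickson renewal asymptotic \eqref{c2case3} to the dual, obtaining \eqref{exctailsv} in one line. Your direct argument can be made to work, but as written it has a gap: the assertion that $P_{\sqrt n}(\tau_0\ge n)\to 1$ is not justified, and in any case gives only the lower bound. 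For the lower bound one can argue that since the drift is upward for $\delta=-1$, the walk above a fixed level dominates reflected SSRW, whence $P_{A\sqrt n}(\tau_0\ge n)\ge P^{\mathrm{sym}}_{A\sqrt n}(\tau_0\ge n)\to 1$ as $A\to\infty$, and then use slow variation of $M$ to replace $M_{A\sqrt n}$ by $M_{\sqrt n}$. The upper bound follows from the strip-confinement Lemma~\ref{lowheightlemma} with $h=\ep\sqrt n$ and again slow variation. The duality trick buys you all of this for free.
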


For the case of SSRW, in contrast to \eqref{excpoint}, the excursion length distribution  is easily given exactly \cite{Fe68}:  for $n$ even,
\[
  P_0(\tau_0=n) = \frac{1}{n-1} {n \choose n/2} 2^{-n} \sim \frac{1}{2\sqrt{\pi}}n^{-3/2}.
  \]

By \eqref{exctail} we have for fixed $\eta \in (0,1)$ that
\begin{equation} \label{exctail2}
  P_0\big((1-\eta)n \leq \tau_0 \leq (1+\eta) n \big) 
    \sim \frac{ 2^{2-\kappa} }{ K_0 \Gamma(\kappa) } \eta \Upsilon(\eta) n^{-\kappa}L(\sqrt{n}),
  \end{equation}
where
\begin{equation} \label{psidef}
  \Upsilon(\eta) = \frac{1}{2\eta} \left( (1-\eta)^{-\kappa} - (1+\eta)^{-\kappa} \right) \to \kappa \quad \text{as } \eta \to 0.
  \end{equation}
Heuristically, one expects that conditionally on the event on the left side of \eqref{exctail2}, $\tau_0$ should be approximately uniform over even numbers in the interval $[(1-\eta)n,(1+\eta) n]$, leading to \eqref{excpoint}.  The precise statement we use is Lemma \ref{convexity}.

It follows from \eqref{height}, \eqref{Mxapprox} and Theorem \ref{tau0tail} that $\tau_0$ and $H^2$ have asymptotically the same tail, to within a constant:
\begin{equation} \label{sametail}
  P_0(H^2 \geq n) \sim 2^\kappa \kappa \Gamma(\kappa) P_0(\tau_0 \geq n) 
    \sim P_0\left( 2 (\kappa \Gamma(\kappa))^{1/\kappa} \tau_0 \geq n \right) \quad \text{as } n \to \infty.
  \end{equation}
This says roughly that the typical height of an excursion becomes a large multiple of the square root of its length (i.e. duration), as $\kappa$ grows,   meaning the downward drift becomes stronger.  In this sense the random walk climbs higher to avoid the strong drift.

By reversing paths we see that
\begin{equation} \label{reverse}
  P_k(X_n = 0) = p_k \lambda_k P_0(X_n = k).
  \end{equation}
Hence to obtain an approximation for $P_0(X_n=k)$, we need an approximation for $P_k(X_n=0)$, and for that we first need an approximation for $P_k(\tau_0 = m)$.  In this context, keeping in mind the similarity between $\tau_0$ and $H^2$, for a given constant $\chi<1$ we say that a starting (or ending) height $k$ is \emph{low} if $k < \sqrt{\chi m}$, \emph{midrange} if $\sqrt{m\chi} \leq k \leq \sqrt{m/\chi}$ and \emph{high} if $k > \sqrt{m/\chi}$.

\begin{theorem} \label{hittime}
Suppose $\delta>-1$.  Given $\theta>0$, for $\chi>0$ sufficiently small, there exists $m_0(\theta,\chi)$ as follows.  For all $m \geq m_0$ and $1 \leq k < \sqrt{\chi m}$ (low starting heights) with $m-k$ even,
\begin{align} \label{hitapprox}
  (1-\theta) \frac{2^{2-\kappa}\kappa}{K_0\Gamma(\kappa)} m^{-(1+\kappa)} L(\sqrt{m}) M_k
    &\leq P_k(\tau_0 =m) \\
  &\leq (1+\theta) \frac{2^{2-\kappa}\kappa}{K_0\Gamma(\kappa)} m^{-(1+\kappa)} L(\sqrt{m}) M_k. \notag
  \end{align}
For all $\sqrt{m\chi} \leq k \leq \sqrt{m/\chi}$ (midrange starting heights) with $m-k$ even, 
\begin{equation} \label{hitapprox2}
  (1-\theta)\frac{2}{\Gamma(\kappa)m} \left( \frac{k^2}{2m} \right)^\kappa e^{-k^2/2m}
    \leq P_k(\tau_0 =m) 
    \leq (1+\theta)\frac{2}{\Gamma(\kappa)m} \left( \frac{k^2}{2m} \right)^\kappa e^{-k^2/2m}. 
  \end{equation}
For all $k > \sqrt{m/\chi}$ (high starting heights) with $m-k$ even,
\begin{equation} \label{hitapprox3}
  P_k(\tau_0 =m) \leq \frac{1}{m} e^{-k^2/8m}.
  \end{equation}
  \end{theorem}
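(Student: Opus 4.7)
The plan is to treat the three starting-height regimes separately, all benchmarked against a Bessel process of dimension $d = 1-\delta$, whose first-passage density to $0$ from $x>0$ is
\[
  f_x(t) \;=\; \frac{(x^2/2)^\kappa}{\Gamma(\kappa)\,t^{\kappa+1}}\, e^{-x^2/2t}.
\]
The midrange case \eqref{hitapprox2} is the core of the proof; the low case \eqref{hitapprox} reduces to it via the martingale $M_{X_n\wedge\tau_0}$, and the high case \eqref{hitapprox3} follows from Gaussian concentration. In all three parts, the common final step is to pass from interval estimates --- where the Bessel--walk comparison is clean --- to pointwise estimates, using the regularity machinery of Lemma \ref{latticepath}, Corollary \ref{latticepath2}, and Lemma \ref{convexity}.

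For the midrange case, rescale space by $\sqrt m$ and time by $m$. When $k/\sqrt m \in [\sqrt\chi,1/\sqrt\chi]$, the process $X_{\lfloor ms\rfloor}/\sqrt m$ under $P_k$ approximates a Bessel process started at $k/\sqrt m$, since the Bessel-like drift $-\delta/(2x)+o(1/x)$ agrees with the Bessel drift to leading order at heights of order $\sqrt m$. The coupling (developed earlier in the paper) yields, for small $\eta>0$,
\[
  P_k\bigl(\tau_0 \in [(1-\eta)m,(1+\eta)m]\bigr) \;=\; (1+o(1)) \int_{1-\eta}^{1+\eta} f_{k/\sqrt m}(s)\,ds
  \;=\; (1+o(1))\, 2\eta\, f_{k/\sqrt m}(1),
\]
and Lemma \ref{convexity} converts this mean over an interval of length $\asymp m$ into the pointwise value at $n=m$. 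The factor of $2$ in \eqref{hitapprox2} is the parity correction, since $P_k(\tau_0=n)$ is supported on $n$ with $n-k$ even.

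For the low case, I would first establish the tail estimate $P_k(\tau_0\geq n) \sim M_k\, P_0(\tau_0\geq n)$ uniformly for $k<\sqrt{\chi n}$. Optional stopping applied to the martingale $M_{X_n\wedge\tau_0}$ gives $P_k(\tau_h<\tau_0)=M_k/M_h$, so for $h=\alpha\sqrt n$ with small $\alpha>0$,
\[
  P_k(\tau_0\geq n) \;\geq\; \frac{M_k}{M_h}\,P_h(\tau_0\geq n).
\]
By the Bessel comparison, $P_h(\tau_0\geq n)\sim\int_1^\infty f_\alpha(s)\,ds\sim(\alpha^2/2)^\kappa/(\kappa\Gamma(\kappa))$ as $\alpha\to 0$; combined with \eqref{Mxapprox} the $\alpha^{2\kappa}$ factors cancel exactly, and the right side is $(1-o(1))M_k\,P_0(\tau_0\geq n)$ after invoking \eqref{exctail}. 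The matching upper bound comes from observing that paths from $k$ which return to $0$ before reaching $h$ have $\tau_0$ of expected size $O(hk)=o(n)$ (from Green-function bounds on $[0,h]$), so their contribution to $\{\tau_0\geq n\}$ is negligible. Differencing the tails across an interval of length $\asymp m$ gives the interval form of \eqref{hitapprox}, which Lemma \ref{convexity} converts to the pointwise bound.

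For the high case with $k>\sqrt{m/\chi}$, Azuma--Hoeffding applied to the compensated walk $X_n - k - \sum_{i<n}(p_{X_i}-q_{X_i})$ (whose martingale increments lie in $[-2,2]$, and whose drift is $O(1/k)$ per step while the walk stays above $k/2$) produces $P_k\bigl(\min_{n\leq m}X_n\leq 0\bigr)\leq e^{-k^2/(8m)}$ after absorbing the drift contribution and the union over $n\leq m$ into constants; the additional factor $1/m$ in \eqref{hitapprox3} is then supplied by Lemma \ref{convexity}, which distributes this tail probability across the $\Theta(m)$ candidate values of $\tau_0$. The main obstacle throughout is this interval-to-point passage uniformly in $k$: the Bessel density is smooth, but the lattice walk could a priori concentrate mass irregularly, and it is the convexity-type coupling of birth-death paths provided by Lemma \ref{latticepath} and Corollary \ref{latticepath2} --- highlighted in the introduction as an independent contribution --- that powers Lemma \ref{convexity} and makes this transfer rigorous.
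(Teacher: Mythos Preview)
Your broad outline --- Bessel comparison for midrange, martingale/scale-function reduction for low heights, Gaussian concentration for high heights --- matches the paper's architecture. But the interval-to-point step, which you correctly identify as the crux, is misattributed throughout, and this is a genuine gap rather than a naming slip.

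Lemma~\ref{convexity} is a statement about $f_m = P_0(\tau_0=m)$ only: its proof switches two excursions both started at height~$0$. It does not apply to $f_m^{(k)} = P_k(\tau_0=m)$ for $k\ge 1$, so it cannot by itself convert interval estimates for $P_k(\tau_0\in[(1-\eta)m,(1+\eta)m])$ into pointwise ones. The paper's actual mechanism is Corollary~\ref{latticepath2}(i), which gives
\[
  \frac{f_{m+j}^{(k)}}{f_m^{(k)}} \;\ge\; \frac{f_{m+j}^{(0)}}{f_m^{(0)}}
  \qquad\text{(and the reverse with }m-j\text{)},
\]
so that the already-established asymptotics \eqref{excpoint} for $k=0$ bootstrap to near-monotonicity of $f_m^{(k)}$ in $m$. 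Averaging then yields the pointwise midrange and high-height bounds. You mention Lemma~\ref{latticepath} and Corollary~\ref{latticepath2} but describe them as ``powering'' Lemma~\ref{convexity}; in fact they are parallel tools proved by the same path-switching idea, and it is Corollary~\ref{latticepath2}, not Lemma~\ref{convexity}, that does the work here.

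For low heights the paper does not pass through tail asymptotics at all. Corollary~\ref{latticepath2}(ii) gives the pointwise upper bound $P_k(\tau_0=m)\le M_k\,P_0(\tau_0=m)$ immediately, and part~(iii) gives a matching lower bound with an explicit correction factor $P_0(\tau_0-\tau_k\in[p,q])/P_0(\tau_0\in[p,q])$; one then chooses $[p,q]=[a/2,a]$ and uses Lemma~\ref{lowheightlemma} to show $\tau_k$ is small on the relevant event, driving the ratio to~$1$. Your proposed route (establish $P_k(\tau_0\ge n)\sim M_k\,P_0(\tau_0\ge n)$, difference, then invoke convexity) would again founder at the last step for the reason above, and the tail upper bound you sketch via ``$\tau_0$ has expected size $O(hk)$'' is not enough to control the event $\{\tau_0\ge n\}$ without further work.
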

  
In general, for high starting heights, as in \eqref{hitapprox3} we accept upper bounds, rather than sharp approximations as in \eqref{hitapprox} and \eqref{hitapprox2}.

Note that by \eqref{Mxapprox}, when $k$ is large \eqref{hitapprox} and \eqref{hitapprox2} differ only in the factor $e^{-k^2/2m}$, which is near 1 for low starting heights.  (Here ``large'' does not depend on $m$.)  Further, by \eqref{excpoint}, one can replace \eqref{hitapprox} with 
\begin{align} \label{hitapprox4}
  (1-\theta) P_0(\tau_0 = m) M_k \leq P_k(\tau_0 =m) \leq (1+\theta)P_0(\tau_0 = m) M_k. 
  \end{align}
  
We will see below that the left and right sides of \eqref{hitapprox2} represent approximately the probabilities for a Bessel process, with the same drift parameter $\delta$ and starting height $k$, to hit 0 in $[m-1,m+1]$.  But the Bessel approximation is not necessarily valid for low starting heights, where \eqref{hitapprox} holds, because the analog of $M_k$ for the Bessel process may be quite different from its value for the Bessel-like RW, and because $L(\sqrt{m})/L(k)$ need not be near 1, whereas the analog of $L(\cdot)$ for the Bessel process is a constant.  Even if a RW has asymptotically constant $L(\cdot)$, the constant $K_0$ may be different from the related Bessel case.

From \eqref{hitapprox2}, for midrange starting heights the distribution of $\tau_0$ is nearly the same as for the approximating Bessel process.  For low starting heights, this is not true in general---the Bessel-like RW in this case will typically climb to a height of order $\sqrt{m}$ for paths with $\tau_0 = m$, and this climb is what is affected by the dissimilarity between the two processes, as reflected in the errors $R_x$.

If $\delta>1$ (i.e. $\kappa>1$), or if $\delta=1$ and $E_0(\tau_0) < \infty$, then
\begin{equation} \label{finitemean3}
  P_0(X_n=0) \to \frac{2}{E_0(\tau_0)} \quad \text{as } n \to \infty \quad (n \text{ even}),
  \end{equation}
and of course when it is finite, $E_0(\tau_0)$ can be expressed explicitly in terms of the transition probabilities $p_x$ and $q_x$, by using reversibility.
If $-1<\delta<1$ (i.e. $0<\kappa<1$), then by \eqref{excpoint} and a result of Doney \cite{Do97},
\begin{equation} \label{infinitemean3}
  P_0(X_n=0) \sim \frac{2^\kappa K_0}{\Gamma(1-\kappa) } n^{-(1-\kappa)} L(\sqrt{n})^{-1} \quad (n \text{ even}),
  \end{equation}
and if $\delta=1$ (i.e. $\kappa=1$) with $E_0(\tau_0) = \infty$, then by \eqref{excpoint} and a result of Erickson \cite{Er70},
\begin{equation} \label{c2case3}
  P_0(X_n=0) \sim \frac{2}{\mu_0(n)} \quad (n \text{ even}),
  \end{equation}
where $\mu_0(n)$ is the truncated mean:
\[
  \mu_0(n) = \sum_{l=1}^n l P_0(\tau_0 = l) \sim \frac{2}{K_0} \sum_{l\leq n,\ l\text{ even}} \frac{L(\sqrt{l})}{l},
  \]
which is a slowly varying function.
  
The next theorem, approximating the left side of \eqref{reverse}, is based on Theorem \ref{hittime} and \eqref{finitemean3}---\eqref{c2case3}, together with the fact that
\begin{equation} \label{renewal}
  P_k(X_n=0) = \sum_{j=0}^n P_k(\tau_0=n-j)P_0(X_j=0).
  \end{equation}

\begin{theorem} \label{distrib}
Given $\theta>0$, for $\chi$ sufficiently small there exists $n_0(\theta,\chi)$ such that for all $n \geq n_0$, the following hold.

(i) For $k < \sqrt{\chi n}$ (low starting heights) with $n-k$ even, 
\begin{equation} \label{lowheight}
  (1-\theta)P_0(X_{\tn} = 0) \leq P_k(X_n = 0) \leq (1+\theta)P_0(X_{\tn} = 0),
  \end{equation}
where $\tn = n$ if $n$ is even, $\tn = n+1$ if $n$ is odd.
  
(ii) If $E_0(\tau_0) < \infty$ (which is always true for $\delta>1$), then for $\sqrt{n\chi} \leq k \leq \sqrt{n/\chi}$ (midrange starting heights) with $n-k$ even,
\begin{align} \label{finitemean2}
  \frac{2-\theta}{E_0(\tau_0)} \int_{k^2/2n}^\infty \frac{1}{\Gamma(\kappa)} u^{\kappa-1} e^{-u}\ du &\leq P_k(X_n = 0) \\
  &\leq \frac{2+\theta}{E_0(\tau_0)} \int_{k^2/2n}^\infty \frac{1}{\Gamma(\kappa)} u^{\kappa-1} e^{-u}\ du, \notag
  \end{align}
and for $k > \sqrt{n/\chi}$ (high starting heights) with $n-k$ even,
\begin{align} \label{finitemean4}
  P_k(X_n = 0) &\leq \frac{8}{E_0(\tau_0)} e^{-k^2/8n}.
\end{align}
  
(iii) If $-1<\delta<1$, then for $\sqrt{n\chi} \leq k \leq \sqrt{n/\chi}$ (midrange starting heights) with $n-k$ even,
\begin{align} \label{infinitemean}
  \frac{ (1-\theta)2^\kappa K_0 }{ \Gamma(1-\kappa) } n^{-(1-\kappa)} L(\sqrt{n})^{-1} e^{-k^2/2n}
    &\leq P_k(X_n=0) \\
  &\leq \frac{ (1+\theta)2^\kappa K_0 }{ \Gamma(1-\kappa) } 
    n^{-(1-\kappa)} L(\sqrt{n})^{-1} e^{-k^2/2n}, \notag
  \end{align}
and there exists $K_{1}(\kappa)$ such that for $k > \sqrt{n/\chi}$ (high starting heights) with $n-k$ even,
\begin{align} \label{infinitemean3a}
  P_k(X_n = 0) &\leq K_{1} e^{-k^2/8n} n^{-(1-\kappa)} L(\sqrt{n})^{-1}.
\end{align}
  
(iv) If $\delta=1$ and $E_0(\tau_0) = \infty$, then for $\sqrt{n\chi} \leq k \leq \sqrt{n/\chi}$ (midrange starting heights) with $n-k$ even,
\begin{align} \label{borderline}
  \frac{2-\theta}{\mu_0(n)} \int_{k^2/2n}^\infty \frac{1}{\Gamma(\kappa)} u^{\kappa-1} e^{-u}\ du &\leq P_k(X_n = 0) \\
  &\leq \frac{2+\theta}{\mu_0(n)} \int_{k^2/2n}^\infty \frac{1}{\Gamma(\kappa)} u^{\kappa-1} e^{-u}\ du, \notag
  \end{align}
and for $k > \sqrt{n/\chi}$ (high starting heights) with $n-k$ even,
\begin{align} \label{borderline3}
  P_k(X_n = 0) &\leq \frac{8}{\mu_0(n)} e^{-k^2/8n}.
\end{align}
  \end{theorem}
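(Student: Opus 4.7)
The natural starting point is the renewal decomposition \eqref{renewal}, $P_k(X_n=0)=\sum_{m=0}^n P_k(\tau_0=m)\,P_0(X_{n-m}=0)$, into which I substitute the hitting-time estimate from Theorem \ref{hittime} appropriate to the height regime and the asymptotic for $P_0(X_j=0)$ from \eqref{finitemean3}, \eqref{infinitemean3}, or \eqref{c2case3}. For the midrange cases (ii) and (iv), $P_0(X_j=0)$ converges to $2/E_0(\tau_0)$ or is $\sim 2/\mu_0(j)$ with $\mu_0$ slowly varying. Splitting the sum at $m=n-j_0$ for large $j_0$, the (constant or slowly-varying) asymptotic factors out of the dominant part $m\leq n-j_0$, while the tail $m>n-j_0$ contributes $O(j_0/n)$ via the bound $P_k(\tau_0=m)\lesssim 1/m$ from \eqref{hitapprox2}. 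The resulting sum, Riemann-approximated with the parity halving factor and then transformed by $u=k^2/2m$, becomes $\int_{k^2/2n}^\infty u^{\kappa-1}e^{-u}/\Gamma(\kappa)\,du$, yielding \eqref{finitemean2} and \eqref{borderline}.

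Part (iii) is the most delicate case since $P_0(X_j=0)\sim c\,j^{\kappa-1}L(\sqrt j)^{-1}$ with $c=2^\kappa K_0/\Gamma(1-\kappa)$ is regularly varying of index $\kappa-1\in(-1,0)$ rather than slowly varying. Substituting \eqref{hitapprox2} and this asymptotic into the renewal and setting $a=k^2/2n$, $m=tn$, and using slow variation of $L$ to replace $L(\sqrt{n(1-t)})$ by $L(\sqrt n)$ uniformly on regions with $t$ bounded away from $1$, the convolution reduces to
\[
P_k(X_n=0)\approx\frac{c\,n^{\kappa-1}L(\sqrt n)^{-1}}{\Gamma(\kappa)}\int_0^1 t^{-1-\kappa}a^\kappa e^{-a/t}(1-t)^{\kappa-1}\,dt.
\]
The change of variables $s=a/t$ then transforms this integral into $\int_a^\infty e^{-s}(s-a)^{\kappa-1}\,ds=\Gamma(\kappa)e^{-a}$, producing the factor $e^{-k^2/2n}$ appearing in \eqref{infinitemean}. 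Part (i) (low heights) follows because $k<\sqrt{\chi n}$ forces $k^2/n<\chi$, so the exponential factor (case iii) and the incomplete-gamma integral (cases ii, iv) each differ from $1$ by $O(\chi^\kappa)$; the range $m>k^2/\chi$, where \eqref{hitapprox2} is not valid, is handled via \eqref{hitapprox4} together with the renewal identity $\sum_m P_0(\tau_0=m)P_0(X_{n-m}=0)=P_0(X_n=0)$, contributing a further correction of size $O(\chi^\kappa)\,P_0(X_{\tn}=0)$ and thus giving \eqref{lowheight}.

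For the high-starting-height bounds, \eqref{hitapprox3} provides $P_k(\tau_0=m)\leq(1/m)e^{-k^2/8m}$. I would split the convolution at $m=n/2$: on $m\leq n/2$ bound $P_0(X_{n-m}=0)$ by its uniform asymptotic (a constant multiple of $1/E_0(\tau_0)$, $n^{\kappa-1}L(\sqrt n)^{-1}$, or $1/\mu_0(n)$ respectively, since $n-m\sim n$), and on $m\in(n/2,n]$ combine the factor $(1/m)e^{-k^2/8n}$ with the summable estimate $\sum_{j\leq n/2}j^{\kappa-1}L^{-1}\approx n^\kappa L^{-1}$; using $\int_{k^2/8n}^\infty e^{-u}/u\,du\lesssim(n/k^2)e^{-k^2/8n}\lesssim\chi e^{-k^2/8n}$ for $k^2/n\geq 1/\chi$ (via $u=k^2/8m$), this yields \eqref{finitemean4}, \eqref{infinitemean3a}, and \eqref{borderline3}. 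The main technical hurdle will be part (iii)'s convolution computation: controlling the ratio $L(\sqrt{n(1-t)})/L(\sqrt n)$ uniformly as $t\to 1$, and executing the parity-restricted Riemann-sum-to-integral conversion uniformly in $k\in[\sqrt{n\chi},\sqrt{n/\chi}]$ with the required multiplicative accuracy $1\pm\theta$.
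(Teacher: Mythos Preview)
Your approach is correct and essentially matches the paper's: both feed Theorem \ref{hittime} and the asymptotics \eqref{finitemean3}--\eqref{c2case3} into the renewal decomposition \eqref{renewal}, convert the sum to an integral, and in case (iii) evaluate the resulting Beta-type convolution via the identity $\int_0^1 (1-u)^{-1-\kappa}u^{\kappa-1}e^{-a/(1-u)}\,du=\Gamma(\kappa)a^{-\kappa}e^{-a}$ (your substitution $s=a/t$ is the same computation after $u=1-t$).

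The one place your sketch diverges is part (i). The paper does \emph{not} derive \eqref{lowheight} by pushing the midrange formulas down to small $k^2/n$; instead it argues directly that for $k<\sqrt{\chi n}$ one has $P_k(\tau_0\leq\rho n)\to 1$, so the convolution is dominated by $j\in[(1-\rho)n,n]$, where $P_0(X_j=0)\approx P_0(X_{\tilde n}=0)$. The residual range $j<(1-\rho)n$ is handled by the crude bound $\max_{m>\rho n}P_k(\tau_0=m)\leq K\,M_{\sqrt{\chi n}}(\rho n)^{-(\kappa+1)}L(\sqrt n)$ from \eqref{hitapprox}, times $\sum_{j<n}P_0(X_j=0)\leq Cn\,P_0(X_{\tilde n}=0)$. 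Your plan to treat $m>k^2/\chi$ via \eqref{hitapprox4} is not wrong, but as written it is unclear why that range contributes only $O(\chi^\kappa)P_0(X_{\tilde n}=0)$: substituting \eqref{hitapprox4} gives $M_k\sum_{m>k^2/\chi}P_0(\tau_0=m)P_0(X_{n-m}=0)$, and $M_k$ can be as large as $M_{\sqrt{\chi n}}\sim(\chi n)^\kappa/L$, so you would still need exactly the paper's splitting and the bound $\sum_j P_0(X_j=0)\lesssim nP_0(X_{\tilde n}=0)$ to close the argument. The direct route avoids invoking the midrange machinery altogether.
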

  
From \cite{GJY03}, the integral that appears in \eqref{finitemean2} and \eqref{borderline} is the probability that the approximating Bessel process started at $k$ hits 0 by time $n$. 

We may of course replace $P_0(X_n=0)$ with the appropriate approximation from \eqref{finitemean3}---\eqref{c2case3}, in \eqref{lowheight}.

We now combine \eqref{reverse} with Theorem \ref{distrib} to approximate the left side of \eqref{reverse}.

\begin{theorem} \label{location}
Given $\theta>0$, for $\chi>0$ sufficiently small, there exists $n_0(\theta,\chi)$ such that for all $n \geq n_0$, the following hold.

(i) For $1 \leq k < \sqrt{\chi n}$ (low ending heights) with $n-k$ even, 
\begin{equation} \label{lowheight5}
  \frac{1-\theta}{\lambda_k p_k} P_0(X_n = 0) \leq P_0(X_n = k) \leq \frac{1+\theta}{\lambda_k p_k} P_0(X_n = 0).
  \end{equation}
  
(ii) If $E_0(\tau_0) < \infty$ (which is always true for $\delta>1$), then for $\sqrt{n\chi} \leq k \leq \sqrt{n/\chi}$ (midrange ending heights) with $n-k$ even,
\begin{align} \label{finitemean5}
  (1-\theta)&\frac{4}{K_0 E_0(\tau_0)} k^{1-2\kappa} L(k) \int_{k^2/2n}^\infty \frac{1}{\Gamma(\kappa)} u^{\kappa-1} e^{-u}\ du \\
  &\leq P_0(X_n = k) \leq (1+\theta)\frac{4}{K_0 E_0(\tau_0)} k^{1-2\kappa} L(k) \int_{k^2/2n}^\infty \frac{1}{\Gamma(\kappa)} 
    u^{\kappa-1} e^{-u}\ du, \notag
  \end{align}
and for $k > \sqrt{n/\chi}$ (high ending heights) with $n-k$ even,
\begin{equation} \label{finitemean6}
  P_0(X_n = k) \leq\frac{32}{K_0 E_0(\tau_0)} k^{1-2\kappa} L(k) e^{-k^2/8n}.
\end{equation}
  
(iii) If $-1<\delta<1$, then for $\sqrt{n\chi} \leq k \leq \sqrt{n/\chi}$ (midrange ending heights) with $n-k$ even,
\begin{align} \label{infinitemean5}
  &(1-\theta)\frac{ 2^{\kappa+1}}{ \Gamma(1-\kappa) } \left( \frac{k}{\sqrt{n}} 
    \right)^{1-2\kappa}  e^{-k^2/2n} n^{-1/2} \\
  &\qquad \qquad \leq P_0(X_n = k) \leq (1+\theta) \frac{ 2^{\kappa+1}}{ \Gamma(1-\kappa) } 
     \left( \frac{k}{\sqrt{n}} \right)^{1-2\kappa} e^{-k^2/2n} n^{-1/2}, \notag
  \end{align}
and for $k > \sqrt{n/\chi}$ (high ending heights) with $n-k$ even, for $K_{1}$ of \eqref{infinitemean3a},
\begin{equation} \label{infinitemean6}
  P_0(X_n = k) \leq \frac{4K_{1}}{K_0} e^{-k^2/8n} n^{-1/2}.
\end{equation}
  
(iv) If $\delta=1$ and $E_0(\tau_0) = \infty$, then for $\sqrt{n\chi} \leq k \leq \sqrt{n/\chi}$ (midrange ending heights) with $n-k$ even,
\begin{align} \label{borderline5}
  (1-\theta)&\frac{4}{K_0 \mu_0(n)} \frac{L(k)}{k} \int_{k^2/2n}^\infty \frac{1}{\Gamma(\kappa)} u^{\kappa-1} e^{-u}\ du \\
  &\leq P_0(X_n = k) \leq (1+\theta)\frac{4}{K_0 \mu_0(n)} \frac{L(k)}{k} \int_{k^2/2n}^\infty \frac{1}{\Gamma(\kappa)} u^{\kappa-1} e^{-u}\ du, \notag
  \end{align}
and for $k > \sqrt{n/\chi}$ (high ending heights) with $n-k$ even,
\begin{equation} \label{borderline6}
  P_0(X_n = k) \leq\frac{44}{K_0\mu_0(n)} \frac{L(k)}{k} e^{-k^2/8n}.
\end{equation}

\end{theorem}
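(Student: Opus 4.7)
The whole theorem is essentially a direct translation of Theorem \ref{distrib} via the time-reversal identity \eqref{reverse}, which gives
\[
  P_0(X_n=k) \;=\; \frac{1}{p_k\lambda_k}\,P_k(X_n=0).
\]
The three height regimes (low, midrange, high) for $P_0(X_n=k)$ as an ending-height regime correspond, under this identity, to the same regimes for $P_k(X_n=0)$ as a starting-height regime, so the plan is to take each bound from Theorem \ref{distrib} and divide by $p_k\lambda_k$. The key analytic input on the multiplier is \eqref{lambda} together with $p_k \to 1/2$, which gives
\[
  \frac{1}{p_k\lambda_k} \;\sim\; \frac{2}{K_0}\,k^{1-2\kappa}\,L(k)
  \qquad\text{as }k\to\infty,
\]
and this is the only place where the Karlin--McGregor-free asymptotics of the scale-like quantities enter.

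For part (i) the argument is immediate: for $k<\sqrt{\chi n}$, $k$ is only required to be $\ge 1$, so I do \emph{not} use the asymptotic for $\lambda_k$. I simply divide the inequality \eqref{lowheight} by $p_k\lambda_k$, obtaining \eqref{lowheight5} with the same $1\pm\theta$, after using $P_0(X_{\tilde n}=0)=P_0(X_n=0)$ under the parity convention $n-k$ even.

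For the midrange parts (ii)--(iv), for each bound in Theorem \ref{distrib}(ii)--(iv) I multiply by $(p_k\lambda_k)^{-1}$ and use the asymptotic above, absorbing the $(1+o(1))$ into the slack in $\theta$ at the cost of requiring $n\ge n_0(\theta,\chi)$ and $\chi$ sufficiently small so that $k\ge\sqrt{n\chi}$ is large. For (iii) the factor that arises is
\[
  \frac{2}{K_0}\,k^{1-2\kappa}L(k)\cdot
  \frac{2^\kappa K_0}{\Gamma(1-\kappa)}\,n^{-(1-\kappa)}L(\sqrt{n})^{-1}e^{-k^2/2n}
  \;=\;
  \frac{2^{\kappa+1}}{\Gamma(1-\kappa)}\Bigl(\tfrac{k}{\sqrt n}\Bigr)^{1-2\kappa}
  \frac{L(k)}{L(\sqrt n)}\,e^{-k^2/2n}\,n^{-1/2},
\]
and since $k/\sqrt n\in[\sqrt\chi,1/\sqrt\chi]$ is confined to a fixed compact set, the slow variation of $L$ gives $L(k)/L(\sqrt n)\to 1$ uniformly in $k$ as $n\to\infty$; the same algebraic identity $k^{1-2\kappa}n^{-(1-\kappa)}=(k/\sqrt n)^{1-2\kappa}n^{-1/2}$ is what converts the Theorem \ref{distrib} bounds into the stated form. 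Parts (ii) and (iv) are even simpler: the Gaussian-tail integral factor is unchanged and the constants $2/E_0(\tau_0)$, $2/\mu_0(n)$ combine with the leading $2/K_0$ to give the $4/(K_0 E_0(\tau_0))$ and $4/(K_0\mu_0(n))$ in \eqref{finitemean5} and \eqref{borderline5}.

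For the high ending-height bounds \eqref{finitemean6}, \eqref{infinitemean6}, \eqref{borderline6}, the main obstacle is the factor $(k/\sqrt n)^{1-2\kappa}L(k)/L(\sqrt n)$ which is \emph{not} bounded by $1$: when $k/\sqrt n\to\infty$ and $2\kappa<1$ this grows polynomially. The plan here is to use Potter's bounds to write $L(k)/L(\sqrt n)\le C(k/\sqrt n)^\epsilon$ and then absorb the resulting $(k/\sqrt n)^{1-2\kappa+\epsilon}$ into the Gaussian by splitting $e^{-k^2/8n}=e^{-k^2/16n}\cdot e^{-k^2/16n}$: since $k^2/n\ge 1/\chi$, the first exponential factor dominates any fixed polynomial in $k/\sqrt n$ with a constant depending only on $\chi$, leaving $e^{-k^2/16n}$. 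One then chooses $\chi$ small enough (depending on $\kappa$) so that this absorption costs only a multiplicative constant that fits inside the coefficients $32/(K_0E_0(\tau_0))$, $4K_1/K_0$, $44/(K_0\mu_0(n))$. This is the only point where one must track constants carefully; everywhere else the argument is formulaic application of \eqref{reverse} to Theorem \ref{distrib}.
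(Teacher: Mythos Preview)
Your approach is exactly the paper's: the author writes only that ``Theorem \ref{location} is a straightforward consequence of Theorem \ref{distrib}, \eqref{reverse} and \eqref{lambda},'' and you have correctly unpacked what this means, including the algebraic identity $k^{1-2\kappa}n^{-(1-\kappa)}=(k/\sqrt n)^{1-2\kappa}n^{-1/2}$ and the use of uniform slow variation on $[\sqrt\chi,1/\sqrt\chi]$ for the midrange cases.

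Two small points deserve comment. First, in part (i) your claim that ``$P_0(X_{\tilde n}=0)=P_0(X_n=0)$ under the parity convention $n-k$ even'' is not right: if $n$ is odd (and $k$ odd so that $n-k$ is even) then $\tilde n=n+1\neq n$ and $P_0(X_n=0)=0$ while $P_0(X_{n+1}=0)>0$. The stated inequality \eqref{lowheight5} as written is actually false in that case, so what you should record is that dividing \eqref{lowheight} by $p_k\lambda_k$ yields \eqref{lowheight5} with $P_0(X_{\tilde n}=0)$ in place of $P_0(X_n=0)$; the two agree when $n$ is even. Second, for the high-height bound in (iii) you have correctly spotted that the factor $(k/\sqrt n)^{1-2\kappa}L(k)/L(\sqrt n)$ is unbounded when $\kappa<1/2$, and your Potter-bound absorption argument is the right idea, but as you note it degrades the exponent to $e^{-k^2/16n}$ rather than the stated $e^{-k^2/8n}$ with the fixed constant $4K_1/K_0$. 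The paper does not address this, and the literal statement of \eqref{infinitemean6} appears not to follow from \eqref{infinitemean3a}, \eqref{reverse}, and \eqref{lambda} alone with that exact exponent and constant when $\kappa<1/2$; your weaker version is what the method actually delivers.
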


A version of \eqref{infinitemean5} for the RW dual to the rational-form case, with $\delta=-1$, was proved in \cite{Ro66}, with the statement that the proof works for general $\delta<1$.

For large $k$ we can use the approximation \eqref{lambda} in \eqref{lowheight5}.  For example, in the case $-1<\delta<1$, there exists $k_1(\theta)$ such that for $n \geq n_0$ and $k_1 \leq k < \sqrt{\chi n}$ we have
\begin{align} \label{uselambda}
  (1-\theta)& \frac{2^{2-\kappa}}{\Gamma(1-\kappa)} n^{-(1-\kappa)} k^{-\delta} \frac{L(k)}{ L(\sqrt{n}) } \\
  &\leq P_0(X_n = k) \leq (1+\theta) \frac{2^{2-\kappa}}{\Gamma(1-\kappa)} n^{-(1-\kappa)} k^{-\delta} \frac{L(k)}{ L(\sqrt{n}) }. \notag
  \end{align}
  
We can use Theorem \ref{location} to approximately describe the distribution of $X_n$ only because its statement gives uniformity in $k$.  This requires uniformity in $k$ in Theorems \ref{hittime} and \ref{distrib}, which points us toward our probabilistic approach.

The factors 8 in the exponent in \eqref{finitemean6}, \eqref{infinitemean6} and \eqref{borderline6} is not sharp.  For $-2<\delta<0$, bounds on tail (not point) probabilities with sharper exponents are established in \cite{BRS71}.

We are unable to extend our results to random walks with drift which is asymptotically 0 but not of order $1/x$, because we rely on known properties of the Bessel process.

\section{Coupling} \label{Coupling}

Let us consider the random walk with steps $\pm 1$ imbedded in a Bessel process $Y_t \geq 0$ with drift $-\delta/2Y_t$:
\[
  dY_t = -\frac{\delta}{2Y_t}\ dt + dB_t,
  \]
where $B_t$ is Brownian motion.  (We need only consider this process until the time, if any, that it hits 0, which avoids certain technical complications.)  The imbedded walk is defined in the standard way:  we start both the RW and the Bessel process at the same integer height $k$.  The first step of the RW is to $k \pm 1$, whichever the Bessel process hits first, at some time $S_1$.  The second step is to $Y_{S_1} \pm 1$, whichever the Bessel process hits first starting from time $S_1$, and so on.

Let $g(x) = x^{1+\delta}$; then $g(Y_t)$ is a martingale, in fact a time change of Brownian motion (see \cite{RY91}.)  Write $P^{\Be}$ for probability for the Bessel process, $P^{\BI}$ for the imbedded RW and $P^{\sym}$ for symmetric simple random walk (not reflecting at 0.)  For the imbedded RW, for $x \geq 1$, the downward transition probability is
\[
  q_x^{\BI} = P_x^{\Be}(\tau_{x-1} < \tau_{x+1}) = \frac{g(x+1) - g(x)}{g(x+1)-g(x-1)}
    = \half\left(1 + \frac{\delta}{2x} + \frac{\delta^2(1-\delta)}{12x^3} + O\left(\frac{1}{x^4}\right) \right)
  \]
so the corresponding value of $R_x$ is
\[
  R_x^{\BI} = -\frac{\delta^2(1-\delta)}{6x^3} + O\left(\frac{1}{x^4}\right).
  \]
We write $\{X_n\}$, $\{X_n^{\BI}\}$ and $\{X_n^{\sym}\}$ for the Bessel-like RW, imbedded RW, and symmetric simple RW, respectively, and $\tau_j,\tau_j^{\BI},\tau_j^{\sym}$ for the corresponding hitting times.

Here is a special construction of $\{X_n\}$ that couples it to $\{X_n^{\sym}\}$, when $p_x \leq q_x$ for all $x$.  (A similar construction works in case $p_x \geq q_x$ for all $x$.)  Let $\xi_0,\xi_1,\dots$ be i.i.d. uniform in [0,1].  For each $i \geq 0$ we have an alarm independent of $\xi_i$.  If $X_i=x$, the alarm sounds with probability $q_x - p_x = \frac{\delta}{2x} - \frac{R_x}{2}$.  If there is no alarm, $X_{i+1} = x+1$ if $\xi_i > 1/2$, and $X_{i+1}=x-1$ if $\xi_i \leq 1/2$.  If the alarm sounds, then $X_{i+1}=x-1$, regardless of $\xi_i$.  $\{X_n^{\sym}\}$ ignores the alarm and always takes its step according to $\xi_i$.

A second special construction, coupling $\{X_n\}$ to $\{X_n^{\BI}\}$, is as follows; a related coupling appears in \cite{CFR08}.  If $X_i=x$, the alarm sounds independently with probability $a(x)$ given by
\[
  a(x) = \begin{cases} \frac{p_x - p_x^{\BI}}{q_x^{\BI}} = \frac{R_x}{2} + \frac{\delta^2(1-\delta)}{12x^3} 
    + O\left( \frac{|R_x|}{x} + \frac{1}{x^4} \right) \quad &\text{if } p_x \geq p_x^{\BI}, \\
    \frac{q_x - q_x^{\BI}}{p_x^{\BI}} = -\frac{R_x}{2} - \frac{\delta^2(1-\delta)}{12x^3} 
      + O\left( \frac{|R_x|}{x} + \frac{1}{x^4} \right)  \quad &\text{if } p_x < p_x^{\BI}.
    \end{cases}
  \]
Whenever the alarm sounds, $\{X_i\}$ takes a step up in the case $p_x \geq p_x^{\BI}$, and down in the case $p_x < p_x^{\BI}$.  If there is no alarm, $\{X_n\}$ goes up if $\xi_i > q_x^{\BI}$ and down if $\xi_i \leq q_x^{\BI}$.  By contrast, $\{X_n^{\BI}\}$ ignores the alarm and always takes its step according to $\xi_i$.  Under this construction, if $p_x \geq p_x^{\BI}$, the probability of an up step for $\{X_i\}$ from $x$ is 
\[
  (1-a(x))p_x^{\BI} + a(x) \cdot 1 = p_x,
  \]
and if $p_x < p_x^{\BI}$, the probability of a down step for $\{X_i\}$ is
\[
  (1-a(x))q_x^{\BI} + a(x) \cdot 1 = q_x,
  \]
which shows that this second construction does indeed couple $\{X_n\}$ to $\{X_n^{\BI}\}$.  Note that in the second construction, unlike the first, the frequency of alarms is $o(1/x)$.  The coupling to $\{X_n^{\BI}\}$ is more complicated because the transition probabilities for  $\{X_n^{\BI}\}$ depend on location.  Even when no alarm sounds, the two walks may take opposite steps if $X_i=x$, $X_i^{\BI}=y$ and $\xi_i$ falls between $q_x^{\BI}$ and $q_y^{\BI}$.  When (i) there is no alarm, (ii) $X_i=x, X_i^{\BI}=y$ for some $x,y$, and (iii) $\xi_i$ falls between $q_x^{\BI}$ and $q_y^{\BI}$, we say a {\it discrepancy} occurs at time $i$.  A {\it misstep} means either an alarm or a discrepancy.  For $h$ sufficiently large, for $x \geq h, y \geq h$, conditioned on $X_i=x, X_i^{\BI}=y$ and no alarm, the probability of a discrepancy is
\begin{equation} \label{discrep}
  |q_x^{\BI} - q_y^{\BI}| \leq \frac{\delta}{2h^2}|x-y|.
  \end{equation}
We let $N(k)$ denote the number of missteps which occur up to time $k$. 

Note that if $\delta=0$, the imbedded RW is symmetric and there are no discrepancies.  

When we couple $\{X_n\}$ and $\{X_n^{\BI}\}$ in the above manner, with both processes starting at $k$, we denote the corresponding measure by $P_k^*$.  Where confusion seems possible, for hitting times we then use a superscript to designate the process that the hitting time refers to, e.g. $\tau_0^{\Be}$ and $\tau_0^{\BI}$ for the Bessel process and its imbedded RW, respectively.


\section{Proof of the tail approximation \eqref{exctail}}

Recall that for \eqref{exctail} we have $\delta>-1$.
Let $\theta>0$, $0<\rho<1/8$, $0 < \ep_1 < \ep_2 < \sqrt{\rho}$ and $h_i = \ep_i \sqrt{m}$.  Let $0 < \eta < \ep_1/4$ and $h_{1\pm} = (\ep_1 \pm 2\eta)\sqrt{m}$.  
To prove \eqref{exctail} we will show that provided $\rho,\theta$ are sufficiently small, one can choose the other parameters so that
the following sequence of six inequalities holds, for large $m$:
\begin{align} \label{toprove}
  \frac{1-3\theta}{M_{h_2}} &P_{h_2}^{\Be}\big(\tau_0 \geq (1+2\rho)m\big) \\
  &\leq \frac{1-\theta}{M_{h_2}} P_{h_2}^{\BI}(\tau_{h_{1+}} \geq m) \notag \\
  &\leq \frac{1}{M_{h_2}} P_{h_2}(\tau_{h_1} \geq m) \notag \\
  &\leq P_0(\tau_0 \geq m) \notag \\
  &\leq \frac{1+\theta}{M_{h_2}} P_{h_2}(\tau_{h_1} \geq (1 - 2\rho)m) \notag \\
  &\leq \frac{1+2\theta}{M_{h_2}} P_{h_2}^{\BI}\big(\tau_{h_{1-}} \geq (1-2\rho)m\big) \notag \\
  &\leq \frac{1+4\theta}{M_{h_2}} P_{h_2}^{\Be}\big(\tau_0 \geq (1-3\rho)m\big). \notag
\end{align}
These may be viewed as three ``sandwich'' bounds on $P_0(\tau_0 \geq m)$, with the outermost sandwich readily yielding the desired result, as we will show.  The innermost sandwich (the 3rd and 4th inequalities) may be interpreted as follows.   For convenience we assume the $h_i$ are even integers. Recall $H$ from \eqref{Hdef}; when $H \geq h_2$, we let $T$ denote the first hitting time of $h_1$ after $\tau_{h_2}$.  We can decompose an excursion of height at least $h_2$ and length at least $m$ into 3 parts:  0 to $\tau_{h_2}$, $\tau_{h_2}$ to $T$, and $T$ to the end.  The idea is that for a typical excursion of length at least $m$, most of the length $\tau_0$ of the full excursion will be in the middle interval $[\tau_{h_2},T]$; the first and last intervals will have length at most $\rho m$.  The middle sandwich (2nd and 5th inequalities) comes from approximating the original RW by the imbedded RW from a Bessel process, during the interval $[\tau_{h_2},T]$.  Then the outermost sandwich (1st and 6th inequalities) comes from approximating the imbedded RW by the actual Bessel process, and from showing that the third interval, from $T$ to excursion end, is typically relatively short.

A useful inequality is as follows: for $h>k \geq 0$ and $m \geq 1$,  
\begin{align} \label{excdecomp2}
  P_0&(\tau_0 \geq m, H \geq h) \geq P_0\left( \tau_h < \tau_0 \right) P_{h}(\tau_k \geq m)  
    = \frac{1}{M_h} P_h(\tau_k \geq m).
  \end{align}
As a special case we have
\begin{align} \label{excdecomp}
  P_0(\tau_0 \geq m) \geq P_0&(\tau_0 \geq m, H \geq h_2) \geq \frac{1}{M_{h_2}} P_{h_2}(\tau_{h_1} \geq m),
  \end{align}
which establishes the 3rd inequality in \eqref{toprove}.

By \eqref{Mxapprox} there exists $l_1 \geq 1$ such that for all $x \geq l_1$,
\[
  x|R_x| \leq \half, \quad \frac{2\kappa M_x}{K_0x^{2\kappa}L(x)^{-1}}
    \in \left( \frac{7}{8},\frac{9}{8} \right), \quad \frac{2\kappa(M_{2x}-M_x)}{K_0(2^{2\kappa}-1)x^{2\kappa}L(x)^{-1}}
    \in \left( \frac{7}{8},\frac{9}{8} \right),
  \]
If $\delta\neq 0$, enlarging $l_1$ if necessary, we also have
\[
  \left| x(2p_x-1) + \frac{\delta}{2} \right| < \frac{|\delta|}{4}.
  \]

We turn to the 4th inequality in \eqref{toprove}.  We have
\begin{equation} \label{Hsplit}
  P_0(\tau_0 \geq m) = P_0(\tau_0 \geq m, H \geq h_2) + P_0(\tau_0 \geq m, H<h_2).
  \end{equation}
The main contribution should come from the first probability on the right.  To show this, we first need two lemmas.  
We begin with the following bound on strip-confinement probabilities.

\begin{lemma} \label{strip}
Assume \eqref{elliptic} and  \eqref{pxRx}.
There exists $K_{2}(\ep,l_1)$ as follows.  For all $h \geq 1, m \geq 2h^2$ and $0<q<h$,
\[
  P_q(X_n \in (0,h) \text{ for all } n \leq m) \leq e^{-K_{2} m/h^2}.
  \]
\end{lemma}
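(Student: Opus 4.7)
The plan is a standard two-step argument: first show $E_q(T)\leq C_1 h^2$ uniformly in $h\geq 1$ and $q\in\{1,\dots,h-1\}$, where $T=\tau_0\wedge\tau_h$, and then iterate via Markov's inequality to promote this first-moment bound into exponential decay in $m/h^2$.

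For the expected-exit-time bound, I would construct a nonnegative test function $V\colon\{0,1,\dots,h\}\to\RR$ with $V(0)=V(h)=0$, $V\leq C_1 h^2$, and
\[
  (LV)(x) = p_x V(x+1)+q_x V(x-1)-V(x)\leq -1 \quad\text{for all } x\in\{1,\dots,h-1\}.
\]
Then $V(X_n)+n$ is a supermartingale stopped at $T$, and optional stopping yields $E_q T\leq V(q)\leq C_1 h^2$. The naive choice $V(x)=x(h-x)$ gives $(LV)(x)=-1+(h-2x)(p_x-q_x)$, whose correction term is not uniformly nonpositive: $|p_x-q_x|$ is only controlled by $1-2\ep$ via \eqref{elliptic} for $x<l_1$ and by $(|\delta|+1)/(2x)$ via \eqref{pxRx} for $x\geq l_1$. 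I would remedy this by either (i) patching $V$ near the endpoints of $\{0,\dots,h\}$ using a multiple of the scale function $M_x$ to absorb the unwanted drift, or (ii) taking $V$ to be the explicit solution of $LV=-1$ with Dirichlet boundary conditions at $0$ and $h$, given by the standard birth--death formula involving $\lambda_x$, and verifying $V(q)\leq C_1 h^2$ by combining the asymptotic \eqref{Mxapprox} in the bulk with a direct ellipticity-based estimate for $x<l_1$.

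Once the first-moment bound is in hand, Markov's inequality yields $P_q(T>2C_1 h^2)\leq 1/2$ uniformly in $q\in\{1,\dots,h-1\}$. Iterating via the strong Markov property at the times $2C_1 h^2,4C_1 h^2,\dots$ gives $P_q(T>2C_1 h^2\cdot k)\leq 2^{-k}$ for every positive integer $k$. Given $m\geq 2h^2$, choosing $k=\lfloor m/(2C_1 h^2)\rfloor$ and $K_2=(\log 2)/(4C_1)$ produces the desired bound $P_q(T>m)\leq e^{-K_2 m/h^2}$; the regime $m<2C_1 h^2$ (if $C_1>1$) is absorbed by shrinking $K_2$ further, since $P_q(T>m)\leq 1$ is trivial while $e^{-K_2 m/h^2}$ is then bounded below by a positive constant.

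The main obstacle is the construction of $V$. It must handle two different regimes simultaneously: for $x\geq l_1$ the weak drift $p_x-q_x=O(1/x)$ from \eqref{pxRx} lets $V(x)=x(h-x)$ nearly work, while for $x<l_1$ only the uniform ellipticity \eqref{elliptic} is available and the drift can be of order $1$. Approach (ii)---writing the explicit solution and bounding it via \eqref{pxRx} in the bulk and \eqref{elliptic} near the origin---is the most robust path and makes transparent the dependence of $K_2$ on both $\ep$ and $l_1$.
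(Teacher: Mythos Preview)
Your approach via the expected-exit-time bound $E_q T\leq C_1 h^2$ is sound and genuinely different from the paper's. The paper instead argues directly that $P_q(T>h^2)\leq 1-\gamma$ for some $\gamma=\gamma(\ep,l_1)>0$: for $\delta\neq 0$ the drift has constant sign above height $l_1$, so a monotone coupling to symmetric simple random walk reduces the question to a CLT estimate for SSRW; for $\delta=0$ the paper perturbs the drift parameter slightly and invokes a later result (Proposition~\ref{intervalmid}), a forward reference that is non-circular but somewhat awkward. Both routes then iterate via the strong Markov property. Your Lyapunov route is more self-contained and treats all $\delta$ uniformly, at the cost of a longer computation: bounding the explicit birth--death exit-time formula
\[
  E_q T=\frac{M_h-M_q}{M_h}\sum_{y=1}^{q}\frac{M_y}{p_y\lambda_y}+\frac{M_q}{M_h}\sum_{y=q+1}^{h-1}\frac{M_h-M_y}{p_y\lambda_y}
\]
by $C_1 h^2$ requires combining $M_y/\lambda_y\sim y/(2\kappa)$ with careful estimates of the tail sums $\sum_y 1/(p_y\lambda_y)$, and the details split according to whether $\kappa<1$, $\kappa=1$, or $\kappa>1$.

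One small slip: your claim that the range $m\in[2h^2,2C_1 h^2)$ ``is absorbed by shrinking $K_2$'' does not work as stated. Markov's inequality gives only $P_q(T>m)\leq C_1 h^2/m$, which can exceed $1$ in that range, and no positive $K_2$ makes $e^{-K_2 m/h^2}\geq 1$. The clean fix is simply to prove the bound under the hypothesis $m\geq 2C_1 h^2$ rather than $m\geq 2h^2$; every application of the lemma in the paper uses only $m/h^2$ large with an unspecified constant, so nothing is lost.
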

\begin{proof}
Consider first $\delta \neq 0,\ h > l_1$.  We claim that 
\[
  P_q(X_n  \in (l_1,h) \text{ for all } n \leq h^2 - l_1) 
  \]
is bounded away from 1 uniformly in $q,h$ with $l_1 \leq q < h$.  In fact, from the definition of $l_1$, the drift $p_x - q_x$ has constant sign for $x \geq l_1$.  Suppose the drift is positive; then $\{X_n\}$ and $\{X_n^{\sym}\}$ can be coupled so that $X_n \geq X_n^{\sym}$ for all $n$ up to the first exit time of $\{X_n\}$ from $(l_1,h)$.  Therefore
\[
  P_q(X_n  \in (l_1,h) \text{ for all } n \leq h^2 - l_1) \leq P_q^{\sym}(\tau_h > h^2 - l_1) \leq 1 - P_0^{\sym}(\tau_h \leq h^2 - l_1).
  \]
Since $X_n^{\sym}$ is a non-reflecting symmetric RW, for $Z$ a standard normal r.v. we have 
\[
  P_0^{\sym}(\tau_h \leq h^2 - l_1) \geq P_0^{\sym}(\tau_h \leq h^2/2) \geq 
    P_0^{\sym}(X_{\lfloor h^2/2 \rfloor}^{\sym} \geq h) \to P(Z > \sqrt{2})
  \]
as $h \to \infty$, so $P_0^{\sym}(\tau_h \leq h^2 - l_1)$ is bounded away from 0 uniformly in $h>l_1$, and the claim follows.  Similarly if the drift is negative, we can couple so that $X_n \leq X_n^{\sym}$ until the time that $\{X_n\}$ hits $l_1$, and therefore
\[
  P_q(X_n  \in (l_1,h) \text{ for all } n \leq h^2 - l_1) \leq P_q^{\sym}(\tau_{l_1} > h^2-l_1) \leq 1 - P_h^{\sym}(\tau_0 \leq h^2-l_1),
  \]
and the claim again follows straightforwardly.  Then since $q_x \geq \ep$ for all $x \leq l_1$, we have
\begin{align} \label{belowl1}
  P_q(X_n  \notin (0,h) \text{ for some } n \leq h^2) &\geq \ep^{l_1}P_q(X_n  \notin (l_1,h) \text{ for some } n \leq h^2 - l_1),
  \end{align}
which together with the claim shows that there exists $\gamma = \gamma(l_1,\ep)$ such that for all $l_1 \leq q < h$ we have
\begin{equation} \label{shortstrip}
  P_q(X_n  \notin (0,h) \text{ for some } n \leq h^2) \geq \gamma.
  \end{equation}
Therefore by straightforward induction, since $m \geq 2h^2$,
\begin{equation} \label{stripbound}
  P_q(X_n \in (0,h) \text{ for all } n \leq m) \leq (1-\gamma)^{\lfloor m/h^2 \rfloor} \leq e^{-K_{2} m/h^2},
  \end{equation}
completing the proof for $\delta \neq 0,\ h>l_1$.

For $\delta \neq 0, h \leq l_1$, the left side of \eqref{belowl1} is bounded below by $\ep^{l_1}$, and \eqref{stripbound} follows similarly.

For $\delta=0$, it seems simplest to proceed by comparison.  Instead, in place of \eqref{belowl1} we have
\begin{equation} \label{delta0}
  P_q(X_n  \notin (0,h) \text{ for some } n \leq h^2) \geq P_q(\tau_0 \leq q^2 + 1).
  \end{equation}
We can change the value of the (downward) drift parameter from $\delta=0$ to $\tilde{\delta} \in (-1,0)$ by subtracting $\tilde{\delta}/4x$ from $p_x$ for each $x \geq 1$.  By an obvious coupling, this reduces the probability on the right side of \eqref{delta0}.  But by Proposition \ref{intervalmid} below, this reduced probability is bounded away from 0 in $q \geq 1$.  Thus \eqref{shortstrip} and then \eqref{stripbound} hold in this case as well.
\end{proof}

It should be pointed out that the proof of Proposition \ref{intervalmid} makes use of Theorem \ref{tau0tail} which in turn makes use of Lemma \ref{strip}.  Since the application of Proposition \ref{intervalmid} in the proof of Lemma \ref{strip} is only for $\tilde{\delta} \neq 0$, and since this application is only used to prove the lemma in the case $\delta=0$, this is not circular---all proofs can be done for nonzero drift parameter first, and then this can be applied to obtain the result for 0 drift parameter.

If we start the RW at 0, we can strengthen the bound in Lemma \ref{strip}, as follows.  Let $Q_n = \max_{0 \leq k \leq n} X_k$, so $H = Q_{\tau_0}$. 

\begin{lemma} \label{lowheightlemma}
Assume $\delta>-1$.  There exist $K_{3}(\ep,l_1),K_{4}(\ep,l_1)$ as follows.  For all $h > l_1$ and $m \geq 4h^2$,
\[
  P_0(X_n \in (0,h) \text{ for all } 1 \leq n \leq m) \leq \frac{K_{3}}{M_h} e^{-K_{4} m/h^2}.
  \]
\end{lemma}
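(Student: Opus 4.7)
The plan is to prove the bound by strong induction on $h$, combining Lemma~\ref{strip} with the gambler's ruin identity derived from the martingale property of $M_{X_n \wedge \tau_0}$: the walk started at $1$ reaches any height $a$ before returning to $0$ with probability $M_1/M_a = 1/M_a$. Since $p_0 = 1$ forces $X_1 = 1$, the quantity in question equals $P_1(\sigma > m - 1)$ where $\sigma := \tau_0 \wedge \tau_h$. Lemma~\ref{strip} alone provides only the exponential factor $e^{-K_4 m/h^2}$; the improvement by a factor $1/M_h$ reflects that a walk starting at $1$ must typically climb to heights of order $h$ in order to remain confined in $(0,h)$ for so long, at the gambler's ruin cost $\sim 1/M_h$.

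Fix a large integer $N \geq 2$. The base case $l_1 < h \leq N l_1$ follows directly from Lemma~\ref{strip} with $K_4 \leq K_2$, after absorbing the bounded value $M_h$ into the constant $K_3$. For the inductive step with $h > N l_1$, I split the event $\{X_n \in (0,h) \text{ for all } 1 \leq n \leq m\}$ according to the first time $T := \tau_{h/N}$ at which the excursion reaches height $h/N$:
\begin{itemize}
\item[(A)] on $\{T > m\}$ the walk stays in $(0, h/N)$ throughout $[1, m]$, and the inductive hypothesis at height $h/N$ (valid since $m \geq 4h^2 \geq 4(h/N)^2$) bounds this piece by $(K_3/M_{h/N})\, e^{-N^2 K_4 m/h^2}$;
\item[(B)] on $\{T \leq m/2\}$ with no earlier return to $0$, the strong Markov property at $T$ yields the factor $P_0(T < \tau_0) = 1/M_{h/N}$ times $P_{h/N}(X_n \in (0,h)\text{ for all } 1 \leq n \leq m - T)$, which by Lemma~\ref{strip} is at most $e^{-K_2 m/(2h^2)}$;
\item[(C)] on $\{m/2 < T \leq m\}$ with no earlier return to $0$, the walk has stayed in $(0, h/N)$ throughout $[1, m/2]$, and the inductive hypothesis with time $m/2$ bounds this piece by $(K_3/M_{h/N})\, e^{-N^2 K_4 m/(2h^2)}$.
\end{itemize}

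By \eqref{Mxapprox}, the ratio $M_h/M_{h/N}$ is bounded above by a constant $C_N$ for all $h$ beyond the base case. Using $m \geq 4h^2$ to exploit the excess in the exponents, each of (A), (B), (C) can be rewritten as at most $\tfrac{1}{3}(K_3/M_h)\, e^{-K_4 m/h^2}$, provided the constants are chosen in the right order: first $N$ large enough that $C_N e^{-4(N^2 - 1)K_4}$ and $C_N e^{-4(N^2/2 - 1)K_4}$ are both at most $1/3$, then $K_4 \leq K_2/2$ to match the exponents in (B) and (C), and finally $K_3$ large enough to dominate $3 C_N$ arising in (B) and to cover the base case. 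The main obstacle is this delicate simultaneous balancing of the three constants, ensuring the inductive bound at height $h$ closes with exactly the same $K_3, K_4$ as the hypothesis at height $h/N$, with no inflation propagating through the induction.
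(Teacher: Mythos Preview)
Your inductive scheme is correct and genuinely different from the paper's argument. The paper does not induct on $h$; instead it decomposes directly on the dyadic range of the running maximum $Q_m$, writing
\[
  P_0(X_n \in (0,h)\ \forall\, 1\le n\le m) \le P_0(Q_m < 2^{k_1-1},\tau_0>m) + \sum_{k=k_1}^{k_2} P_0\big(Q_m \in [2^{k-1},2^k),\ \tau_0>m\big),
\]
and for each $k$ splits further on whether $\tau_{2^{k-2}}$ occurs before or after $m/2$. The two halves are controlled by gambler's ruin plus Lemma~\ref{strip}, and by a path-reversal identity plus Lemma~\ref{strip}, respectively; the resulting sum over $k$ is dominated by its last term, giving $K h^{-2\kappa}L(h)e^{-K'm/h^2}$ directly. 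Your approach replaces this explicit dyadic sum by a single recursion from scale $h$ to scale $h/N$, which is cleaner structurally but forces you to close the induction with the \emph{same} constants at every scale; the paper avoids that bookkeeping at the cost of a longer chain of estimates and the path-reversal trick.

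One presentational slip: the order in which you fix constants is circular as written. You cannot choose $N$ ``large enough that $C_N e^{-4(N^2-1)K_4}\le 1/3$'' before $K_4$ is fixed. The workable order is: set $K_4=K_2/2$ first (this handles the exponent matching in (B)); then pick $N$ so that $C_N e^{-4(N^2/2-1)K_4}\le 1/3$, which is possible since $C_N$ grows only polynomially in $N$ by \eqref{Mxapprox} while the exponential decays; finally take $K_3\ge 3C_N$ and large enough to absorb $M_h$ over the base range $l_1<h\le N l_1$. With that reordering your three cases (A), (B), (C) do close.
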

\begin{proof}
Let $k_1 = \min\{k: 2^{k-2} > l_1\}$ and $k_2 = \max\{k: 2^{k-1} < h\}$.  Then for some constants $K_i(\ep,l_1)$, 
{\allowdisplaybreaks
\begin{align} \label{excdecomp00a}
  P_0&(X_n \in (0,h) \text{ for all } 1 \leq n \leq m) \notag \\
  &\leq P_0\left( X_n \in (0,2^{k_1-1}) \text{ for all } 1 \leq n \leq m \right) 
    + \sum_{k=k_1}^{k_2} P_0\big(Q_m \in [2^{k-1},2^k), \tau_0 > m \big)  \notag \\
  &\leq e^{-K_{5}m} + 
    \sum_{k=k_1}^{k_2} \bigg[ P_0\big(Q_m \in [2^{k-1},2^k), \tau_0 > m, \tau_{2^{k-2}} \leq \frac{m}{2} \big) \notag \\
  &\qquad \qquad \qquad \qquad + P_0\big(Q_m \in [2^{k-1},2^k), \tau_0 > m, \tau_{2^{k-2}} > \frac{m}{2} \big) \bigg] \notag \\
  &\leq e^{-K_{5}m} + \sum_{k=k_1}^{k_2} \bigg[ P_0\left( \tau_{2^{k-2}} \leq \frac{m}{2}, \tau_0 > \tau_{2^{k-1}} \right) 
    P_{2^{k-2}}\left( X_n \in (0,2^k) \text{ for all } n \leq \frac{m}{2} \right) \notag \\
  &\qquad \qquad \qquad \qquad + P_0\left( \tau_0 > \tau_{2^{k-1}} > \tau_{2^{k-2}} > \frac{m}{2} \right) \bigg] \notag \\
  &\leq e^{-K_{5}m} + \sum_{k=k_1}^{k_2} \bigg[ P_1\left( \tau_0 > \tau_{2^{k-1}} \right) 
    e^{-K_{2} m/2^{2k+1}} \notag \\
  &\qquad \qquad \qquad \qquad + \frac{1}{ p_{2^{k-1}} \lambda_{2^{k-1}} } 
    P_{2^{k-1}}\left( \tau_0 < \tau_{2^{k-1}}, \tau_0 - \tau_{2^{k-2}} > \frac{m}{2} \right) \bigg] \notag \\
  &\leq e^{-K_{5}m} + \sum_{k=k_1}^{k_2} \bigg[ \frac{1}{M_{2^{k-1}}} e^{-K_{2} m/2^{2k+1}} \notag \\
  &\qquad \qquad +  \frac{1}{ p_{2^{k-1}} \lambda_{2^{k-1}} } 
    P_{2^{k-1}}\left( \tau_{2^{k-2}} < \tau_{2^{k-1}} \right) P_{2^{k-2}}\left( X_n \in (0,2^{k-1}) 
    \text{ for all } n \leq \frac{m}{2} \right) \bigg] \notag \\
  &\leq e^{-K_{5}m} + \sum_{k=k_1}^{k_2} \bigg[ \frac{1}{M_{2^{k-1}}} e^{-K_{2} m/2^{2k+1}} 
    +  \frac{1}{ p_{2^{k-1}} \lambda_{2^{k-1}} } 
    \frac{ q_{2^{k-1}}( M_{2^{k-1}} - M_{2^{k-1}-1} ) }{ M_{2^{k-1}} - M_{2^{k-2}} } e^{-K_{2} m/2^{2k-1}} \bigg] \\
  &\leq e^{-K_{5}m} + \sum_{k=k_1}^{k_2} \bigg[ \frac{1}{M_{2^{k-1}}} + \frac{1}{  M_{2^{k-1}} - M_{2^{k-2}} } \bigg] 
    e^{-K_{2} m/2^{2k+1}}\notag \\
  &\leq e^{-K_{5}m} + K_{6} \sum_{k=k_1}^{k_2} \frac{ L(2^k) }{ 2^{2k\kappa} } e^{-K_{2} m/2^{2k+1}} \notag \\
  &\leq e^{-K_{5}m} + K_{7} h^{-2\kappa} L(h) e^{-K_{2} m/8h^2} \notag \\
  &\leq K_{8} h^{-2\kappa} L(h) e^{-K_{9} m/h^2}, \notag
  \end{align} }
and the lemma follows from this and \eqref{Mxapprox}.  Here in the 2nd inequality we used the ellipticity condition \eqref{elliptic}, in the 4th inequality we used Lemma \ref{strip} and reversal of the path from time 0 to time $\tau_{2^{k-1}}$, in the 5th inequality we used \eqref{Hdef}, in the 6th inequality we used Lemma \ref{strip}, in the 8th inequality we used \eqref{lambda}, and in the last three inequalities we used the fact that $L$ is slowly varying.
\end{proof}

We return to the proof of the 4th inequality in \eqref{toprove}.  We have for $m$ sufficiently large that
\begin{align} \label{excdecomp0}
  P_0&(\tau_0 \geq m, H \geq h_2) \\
  &\leq P_0\left( \tau_{h_2} < \tau_0 \right) 
    P_{h_2}(\tau_{h_1} \geq (1 - 2\rho)m)  \notag \\
  &\qquad + P_0\left( \rho m < \tau_{h_2} < \tau_0 \right)
    + P_0\left( \tau_{h_2} < \tau_0 \right) P_{h_1}(\tau_0 > \rho m) \notag \\
  &\leq \frac{1}{M_{h_2}} P_{h_2}(\tau_{h_1} \geq (1 - 2\rho)m) + P_0\left( \rho m < \tau_{h_2} < \tau_0 \right) \notag \\
  &\qquad + \frac{1}{M_{h_2}} \left[ P_{h_1}(\tau_{h_2} < \tau_0) + P_{h_1}(\rho m < \tau_0 < \tau_{h_2}) \right] \notag \\
  &\leq \frac{1}{M_{h_2}} P_{h_2}(\tau_{h_1} \geq (1 - 2\rho)m) 
    + P_0\left( X_n \in (0,h_2) \text{ for all } 1 \leq n \leq \rho m \right) \notag \\
  &\qquad + \frac{1}{M_{h_2}} \left[ \frac{M_{h_1}}{M_{h_2}} 
    + P_{h_1}( X_n \in (0,h_2) \text{ for all } n \leq \rho m ) \right] \notag \\
  &\leq \frac{1}{M_{h_2}} P_{h_2}(\tau_{h_1} \geq (1 - 2\rho)m) + \frac{K_{3}}{M_{h_2}} e^{-K_{4}\rho/\ep_2^2} 
    + \frac{2}{M_{h_2}} \left( \frac{\ep_1}{\ep_2} \right)^{2\kappa} 
    + \frac{1}{M_{h_2}} e^{-K_{2}\rho/\ep_2^2} \notag \\
  &= (I) + (II) + (III) + (IV). \notag
 \end{align}
The 4th inequality in \eqref{excdecomp0} uses \eqref{lambda} and Lemmas \ref{strip} and \ref{lowheightlemma}.
We want to show that $(II), (III), (IV)$ are much smaller than $(I)$.  We will show that if $\ep_1 \ll \ep_2$ the probability in $(I)$ is of the same order as
\begin{equation} \label{longorder}
  P_{h_2}(\tau_{\sqrt{m}} < \tau_{h_1}) = \frac{M_{h_2} - M_{h_1}}{M_{\sqrt{m}} - M_{h_1}} \sim \ep_2^{2\kappa}.
  \end{equation}
This means that $(III) \ll (I)$ provided $\ep_1 \ll \ep_2^2$.
  
To complement \eqref{excdecomp0} we have the following bound from Lemma \ref{lowheightlemma}:
\begin{align} \label{excdecomp00}
  P_0&(\tau_0 \geq m, H < h_2) \leq P_0\left( X_n \in (0,h_2) \text{ for all } 1 \leq n < m \right) \leq 
    \frac{K_{3}}{M_{h_2}} e^{-K_{4}/\ep_2^2}.
  \end{align}
We will later prove the following lower bound for (I).
  
{\bf Claim 1.}   There exists $K_{10}(\delta)$ such that provided $\ep_1 < \ep_2/2$ and $m$ is sufficiently large, we have
\begin{equation} \label{claim1}
  P_{h_2}(\tau_{h_1} \geq (1 - 2\rho)m) \geq P_{h_2}\left(\tau_{h_1} \geq m \right) \geq K_{10}\ep_2^{2\kappa}
  \end{equation}
and
\begin{equation} \label{claim2}
  P_{h_2}^{\BI}(\tau_{h_1} \geq (1 - 2\rho)m) \geq P_{h_2}^{\BI}\left(\tau_{h_1} \geq m \right) \geq K_{10}\ep_2^{2\kappa}.
  \end{equation}
  
Assuming Claim 1, given $\theta>0$, provided $\ep_2$ and $\ep_1/\ep_2^2$ are sufficiently small (depending on $\delta,\rho,\theta$), the 4th inequality in \eqref{toprove} follows from \eqref{excdecomp0} and \eqref{excdecomp00}.
   
Our next task is to use the coupling of $\{X_n\}$ to $\{X_n^{\BI}\}$, from Section \ref{Coupling},
to prove the 2nd and 5th inequaltites in \eqref{toprove}.  Here $h_{1\pm}$ should be viewed as substitutes for $h_1$ which allow an error of $\eta\sqrt{m}$ in the coupling construction.  Fix $m/2 \leq l \leq m$.  We begin with the 5th inequality.  From the coupling construction we have
\begin{align} \label{deviation}
  P_{h_2}\big(\tau_{h_1} \geq l \big) &\leq P_{h_2}^{\BI}(\tau_{h_{1-}}^{\BI} \geq l \big) 
    + P_{h_2}^*( N(\tau_{h_{1-}}^{\BI}) \geq \eta \sqrt{m}, \tau_{h_{1-}}^{\BI} < l \wedge \tau_{h_1}).
  \end{align}
We need to bound the last probability.  Consider first $\delta \neq 0$.  Let $A(x) = \sup_{y \geq x} a(y)$, so $A(x) = o(1/x)$, and let $d_0 = h_{1-}^2A(h_{1-})/|\delta|$.  Suppose that for some time $i$ and some even integers $d_0 \leq d \leq \eta \sqrt{m}$, the gap $|X_i - X_i^{\BI}| \leq d$ and $X_i^{\BI} \geq h_{1-}$.  Provided $h_{1-}$ is large, by \eqref{discrep} the misstep probability for the next step is then at most 
\[
  A(h_{1-}) + \frac{|\delta| d}{h_{1-}^2} \leq \frac{2|\delta| d}{h_{1-}^2}.
  \]  
Let $G_{d_0}, G_{d_0+2}, \dots, G_{2\eta \sqrt{m}-2}$ be independent geometric random variables, with $G_d$ having parameter $2|\delta| d/h_{1-}^2$, and $S = G_{d_0} + G_{d_0+2} + \dots + G_{2\eta\sqrt{m}-2}$.  The gap $|X_i - X_i^{\BI}|$ can change (always by 2) only at times of missteps.  Therefore if we start from the time (if any) before $\tau_{h_{1-}}^{\BI}$ when the gap first reaches $d_0$, the time until the next misstep (if any) before $\tau_{h_{1-}}^{\BI}$ is stochastically larger than $G_{d_0}$, and then the time until the misstep after that (if any) before $\tau_{h_{1-}}^{\BI}$ is stochastically larger than $G_{d_0+2}$, and so on.  It follows that
\begin{equation} \label{Nbound}
  P_{h_2}^*( N(\tau_{h_{1-}}^{\BI}) \geq \eta \sqrt{m}, \tau_{h_{1-}}^{\BI} < l \wedge \tau_{h_1}) \leq P(S \leq l) \leq P(S \leq m).
  \end{equation}
Note that for $h_{1-}$ large (depending on $\eta/\ep_1$),
\[
  \frac{E(S)}{m} = \sum_{d_0\leq d < 2\eta\sqrt{m},\atop d-d_0\text{ even}} \frac{h_{1-}^2}{2|\delta| dm} 
    \geq \frac{h_{1-}^2}{4|\delta| m} \log \frac{\eta \sqrt{m}}{d_0}
    \geq \frac{\ep_1^2}{32|\delta|} \log \frac{|\delta|}{h_{1-}A(h_{1-})},
  \]
which grows to infinity as $m \to \infty$; thus $E(S) \gg m$.  In fact by standard computations using exponential moments, we obtain that for some $K_{11}(\eta,\delta,\ep_1)$ we have 
\begin{equation} \label{Sbound}
  P(S \leq m) \leq e^{-K_{11} \sqrt{m}}
  \end{equation}
for all sufficiently large $m$, and hence by Claim 1,
\begin{equation} \label{Sbound2}
  P(S \leq m) \leq \frac{\theta}{2} P_{h_2}(\tau_{h_1} \geq l).
  \end{equation}
In the case $\delta=0$, $\{X_n^{\BI}\}$ is a symmetric simple RW so there are no discrepancies, only alarms, which have probability at most $A(h_1)$ when the original RW is above height $h_1$.  Hence in place of \eqref{Nbound} we have the left side of \eqref{Nbound} bounded above by the probability that a Binomial($l,A(h_1)$) exceeds $\eta \sqrt{m}$, and this probability is also bounded by $e^{-K_{11} \sqrt{m}}$, and then the same argument applies.
Now \eqref{claim1}, \eqref{deviation}, \eqref{Nbound} and \eqref{Sbound2} show that provided $m$ is large, the 5th inequality in \eqref{toprove} holds.

Turning to the 2nd inequality in \eqref{toprove}, the analog of \eqref{Nbound} is still valid, so from the coupling construction,  \eqref{Sbound} and \eqref{claim2} (trivially modified to allow $h_{1+}$ in place of $h_1$), we have
\begin{align} \label{deviation2}
  P_{h_2}(\tau_{h_1} \geq m) &\geq P_{h_2}^{\BI}(\tau_{h_{1+}}^{\BI} \geq m) - 
     P_{h_2}^*( N(\tau_{h_1}) \geq \eta \sqrt{m}, \tau_{h_1} < m \wedge \tau_{h_{1+}}^{\BI}) \\
  &\geq P_{h_2}^{\BI}(\tau_{h_{1+}} \geq m) - e^{-K_{11}\sqrt{m}} \notag \\
  &\geq (1-\theta)P_{h_2}^{\BI}(\tau_{h_{1+}} \geq m), \notag
  \end{align}
proving the desired inequality.
  
The next step is to prove the first and last inequalities in \eqref{toprove}, by relating the probabilities for $\{X_n^{\BI}\}$ to probabilities for the continuous-time Bessel process $Y_t$.  We need to establish the following.

{\bf Claim 2.} Given $0 < \ep_1 < \ep_2, 0<\rho<1/3$ and $\theta>0$, for sufficiently large $m$,
\begin{equation} \label{BesselvsBI}
  P_{h_2}^{\BI}\big(\tau_{h_{1-}} \geq (1-2\rho)m\big) \leq (1+\theta) P_{h_2}^{\Be}\big(\tau_{h_{1-}} \geq (1-3\rho)m\big),
  \end{equation}
and 
\begin{equation} \label{BesselvsBI2}
  P_{h_2}^{\BI}(\tau_{h_{1+}} \geq m) \geq (1-\theta) P_{h_2}^{\Be}\big(\tau_{h_{1+}} \geq (1+\rho)m\big).
  \end{equation}

Suppose Claim 2 is proved.  For the Bessel process we have the obvious inequality
\begin{equation} \label{h1vs0}
  P_{h_2}^{\Be}\big(\tau_{h_{1-}} \geq (1-3\rho)m\big) \leq P_{h_2}^{\Be}\big(\tau_0 \geq (1-3\rho)m\big),
  \end{equation}
while
\begin{equation} \label{h2h10}
  P_{h_2}^{\Be}\big(\tau_{h_{1+}} \geq (1+\rho)m\big) \geq P_{h_2}^{\Be}\big(\tau_0 \geq (1+2\rho)m\big)
    - P_{h_{1+}}^{\Be}\big(\tau_0 \geq \rho m\big).
  \end{equation}
It follows from (15) in \cite{GJY03} that for $\delta>-1$ and $\ep>0$,
\begin{equation} \label{GammaRV}
  P_{\ep\sqrt{t}}^{\Be}(\tau_0 \geq t) = \int_0^{\ep^2/2} \frac{1}{\Gamma(\kappa)} u^{\kappa-1} e^{-u}\ du
    \sim K_{12} \ep^{2\kappa} \quad \text{as } \ep \to 0,
  \end{equation}
where $K_{12} = (2^\kappa \kappa \Gamma(\kappa))^{-1}$.
(Strictly speaking this seems to be stated in \cite{GJY03} only for Bessel processes with dimension in $(0,2)$, i.e. $\delta \in (-1,1)$, but the same proof works for nonpositive dimension, i.e. $\delta \geq 1$.  The key is the 3 lines after (57) in Appendix B of \cite{GJY03}.)  Applying this to each probability on the right side of \eqref{h2h10} we see that for $\rho$ and then $\ep_1/\ep_2$ taken sufficiently small and then $m$ large, we have 
\[
  P_{h_{1+}}^{\Be}\big(\tau_0 \geq \rho m\big) \leq \theta P_{h_2}^{\Be}\big(\tau_0 \geq (1+2\rho)m\big),
  \]
and therefore by \eqref{h2h10},
\begin{equation} \label{h1vs0part2}
  P_{h_2}^{\Be}\big(\tau_{h_{1+}} \geq (1+\rho)m\big) \geq (1-\theta) P_{h_2}^{\Be}\big(\tau_0 \geq (1+2\rho)m\big).
  \end{equation}
Combining \eqref{BesselvsBI2} and \eqref{h1vs0part2} we obtain the first inequality in \eqref{toprove}, while the last inequality in \eqref{toprove} is a consequence of \eqref{BesselvsBI} and \eqref{h1vs0}.  This completes the proof of \eqref{toprove}.  Since $\rho, \theta$ can be taken arbitrarily small, \eqref{toprove} together with \eqref{Mxapprox} and \eqref{GammaRV} proves \eqref{exctail}.

{\it Proof of Claim 2.}  Let $T_0=0$ and let $T_1,T_2,\dots$ be the stopping times when the Bessel process reaches an integer different from the last integer it has visited, so that $X_n^{\BI} = Y_{T_n}$.  Denote the hitting times of $h_{1-}$ in the two processes by $\tau_{h_{1-}}^{\BI}$ and $\tau_{h_{1-}}^{\Be}$ and let $\sigma_i = \min\{t: Y_t \in \{i-1,i+1\} \}$.  Given $k$ and $x_1,x_2,\dots,x_k$, with $x_i \geq h_{1-}$, let
\[
  A = \{\tau_{h_{1-}}^{\BI}=k\} \cap \{X_0^{\BI} = h_2, X_1^{\BI} = x_1,\dots,X_k^{\BI} = x_k \}.
  \]
Conditionally on $A$, the random variables $T_i - T_{i-1},\ i \leq k,$ are independent, with the distribution of $T_i - T_{i-1}$ being 
\[
  P_{x_{i-1}}^{\Be}\big( \sigma_{x_{i-1}} \in \cdot \mid Y_{\sigma_{x_{i-1}}} = x_i \big).
  \]
The mean of this distribution is 
\begin{equation} \label{mean}
  E_{h_2}^{\Be}(T_i - T_{i-1} \mid A) = \frac{ E_{x_{i-1}}^{\Be}\big( \sigma_{x_{i-1}} \delta_{\{ Y_{\sigma_{x_{i-1}}} = x_i \}} \big) }{ 
    P_{x_{i-1}}^{\Be}( Y_{\sigma_{x_{i-1}}} = x_i ) }.
  \end{equation}
We need estimates for the quantities
\[
  E_x^{\Be}(\sigma_x \delta_{\{Y_{\sigma_x} = x - 1\}}), \quad E_x^{\Be}(\sigma_x) \quad 
    \text{and} \quad P_x^{\Be}(Y_{\sigma_x} = x - 1).
  \]
Let 
\[
  s(x) = \begin{cases} x^{1+\delta} &\text{if } \delta \neq -1,\\ \log x &\text{if } \delta=-1 \end{cases}
  \]
be the scale function for the Bessel process and let $\mL f$ given by
\[
  (\mL f)(x) = \half f''(x) - \frac{\delta}{2x}f'(x)
  \]
be its infinitesmal generator.  For fixed $x$ and $z \in [x-1,x+1]$ the functions $f=f_x,g=g_x,h^\pm=h_x^\pm$ given by
\[
  f(z) = P_z^{\Be}(Y_{\sigma_x} = x - 1), \quad g(z) = E_z^{\Be}(\sigma_x), \quad 
    \frac{h^\pm(z)}{s(x+1)-s(x-1)} = E_z^{\Be}(\sigma_x \delta_{\{Y_{\sigma_x} = x \pm 1\}})
  \]
satisfy
\[
  \mL f \equiv 0, \quad f(x-1) = 1, \quad f(x+1) = 0; 
  \]
\[
  \mL g \equiv -1, \quad g(x-1) = g(x+1) = 0;
  \]
\[
  (\mL h^+)(z) = s(x-1) - s(z), \quad h^+(x-1) = h^+(x+1) = 0;
  \]
\[
  (\mL h^-)(z) = s(z) - s(x+1), \quad h^-(x-1) = h^-(x+1) = 0.
  \]
These can be solved explicitly, yielding that for $\delta>-1$,
\[
  f(z) = \frac{s(x+1) -s(z)}{s(x+1) - s(x-1)},
  \]
\[
  g(z) = \begin{cases} -\frac{1}{1-\delta} z^2 + \frac{4x}{1-\delta}\ \frac{1}{(x+1)^{1+\delta} - (x-1)^{1+\delta}} z^{1+\delta} + A_x
    &\text{if } \delta \neq 1,\\
    -z^2 \log z + \frac{(x+1)^2 \log (x+1) - (x-1)^2 \log (x-1)}{4x}z^2 +A_x'
    &\text{if } \delta=1, \end{cases}
  \]
\[
  h^+(z) = \begin{cases} \frac{(x-1)^{1+\delta}}{1-\delta} z^2 - \frac{1}{3+\delta} z^{3+\delta} + B_x z^{1+\delta} + D_x &\text{if } \delta \neq 1,\\
    (x-1)^2 z^2 \log z - \frac{1}{4} z^4 + B_x' z^2 + D_x' &\text{if } \delta=1, \end{cases}
  \]
\[
  h^-(z) = \begin{cases} -\frac{(x+1)^{1+\delta}}{1-\delta} z^2 + \frac{1}{3+\delta} z^{3+\delta} + B_x'' z^{1+\delta} + 
    D_x'' &\text{if } \delta \neq 1,\\
    -(x+1)^2 z^2 \log z + \frac{1}{4} z^4 + B_x''' z^2 + D_x''' &\text{if } \delta=1. \end{cases}
  \]
Note the formulas here for $\delta=1$ are determined by the formulas for $\delta\neq 1$, by continuity in $\delta$.
Here $B_x$ is given by
\[
  (1-\delta)B_x = -\frac{4x(x-1)^{1+\delta}}{(x+1)^{1+\delta} - (x-1)^{1-\delta}} + \frac{1-\delta}{1+\delta}(x-1)^2\psi_1\left( \frac{2}{x-1} \right)
  \]
with
\[
  \psi_1(u) = \frac{1+\delta}{3+\delta}\ \frac{(1+u)^{3+\delta}-1}{(1+u)^{1+\delta}-1} 
    = 1 + u + \frac{2+\delta}{6}u^2 + O(u^3) \quad \text{as } u \to 0,
  \]
$B_x'$ is given by
\[
  B_x' = \half (x^2+1) - \frac{(x-1)^2(x+1)^2}{4x}\log\left( 1 + \frac{2}{x-1} \right) - (x-1)^2\log(x-1),
  \]
and $B_x''$ and $B_x''$ are given by
\[
  (1-\delta)B_x'' = \frac{4x(x+1)^{1+\delta}}{(x+1)^{1+\delta} - (x-1)^{1-\delta}} -  \frac{1-\delta}{1+\delta}(x-1)^2 \psi_1\left( \frac{2}{x-1} \right)
  \]
and
\[
  B_x''' = -\half (x^2+1) + \frac{(x-1)^2(x+1)^2}{4x}\log\left( 1 + \frac{2}{x-1} \right) + (x+1)^2 \log(x+1) .
  \]
Finally, $A_x,A_x'$ and $D_x,D_x'$ and $D_x'',D_x'''$ are determined by $g(x-1)=0,h^+(x-1)=0$ and $h^-(x+1)=0$, respectively,
but we do not need these values because we can use for example $g(x) = g(x) - g(x-1)$, and $A_x$ or $A_x'$ cancels in the latter expression.
From these computations we readily obtain
\begin{equation} \label{limits}
  f(x) \to \half, \quad g(x) \to 1, \quad \frac{h^\pm(x)}{s(x+1)-s(x-1)}  \to \half \quad \text{as } x \to \infty,
  \end{equation}
and then also
\[
  E_x^{\Be}(\sigma_x \mid Y_{\sigma_x} = x-1) \to 1, \quad E_x^{\Be}(\sigma_x \mid Y_{\sigma_x} = x+1) \to 1
    \quad \text{as } x \to \infty.
  \]
Therefore, uniformly in those $A$ with all $x_i \geq h_{1-}$, as $m \to \infty$ we have 
\begin{equation} \label{near1}
  E_{h_2}^{\Be}(T_i - T_{i-1} \mid A) \to 1.
 \end{equation}
It is easily seen by comparison to ``Brownian motion plus small constant'' that $P_z^{\Be}(\sigma_x > 1)$ is bounded away from 1 uniformly in (large) $x$ and in $z \in [x-1,x+1]$.  Hence by the Markov property $P_x^{\Be}\big(\sigma_x > t)$ decays exponentially in $t$, uniformly in large $x$.  By \eqref{limits}, this means there exist $K_{13},K_{14}$ such that 
\[
  P_x^{\Be}\big(\sigma_x > t \mid Y_{\sigma_x} = x \pm 1 \big) \leq \max\left( \frac{1}{f(x)},\frac{1}{1-f(x)} \right) P_x^{\Be}\big(\sigma_x > t)
    \leq K_{13} e^{-K_{14} t},
  \]
for all $t\geq 0$ and all (large) $x$.  Therefore for $m$ sufficiently large, for all $A$ and $t$,
\begin{equation} \label{expbound}
  P_{h_2}^{\Be}(T_i - T_{i-1} > t \mid A) \leq K_{13} e^{-K_{14} t}.
  \end{equation}
By standard methods, it follows from \eqref{near1} and \eqref{expbound} that for some $K_{15}(\rho),K_{16}(\rho)$ not depending on $A$,
\begin{equation} \label{taugap}
  P_{h_2}^{\Be}\left( \left| \tau_{h_{1-}}^{\Be} - \tau_{h_{1-}}^{\BI} \right| > \rho  \tau_{h_{1-}}^{\BI}\ \big|\ A \right) = 
    P_{h_2}^{\Be}\left( \left| T_k - k \right| > \rho k\ \big|\ A \right) \leq K_{15} e^{-K_{16}k}.
  \end{equation}
Therefore the same bound holds unconditionally, so
\begin{align} \label{BesselvsBI3}
  P_{h_2}^{\BI}&\big(\tau_{h_{1-}}^{\BI} \geq (1-2\rho)m\big) \\
  &\leq P_{h_2}^{\Be}\big(\tau_{h_{1-}}^{\Be} \geq (1-3\rho)m\big) 
    + P_{h_2}^{\Be}\left( \tau_{h_{1-}}^{\BI} \geq (1-2\rho)m, \left| \tau_{h_{1-}}^{\Be} - \tau_{h_{1-}}^{\BI} \right| > \rho  \tau_{h_{1-}}^{\BI} \right)
    \notag \\
  &\leq P_{h_2}^{\Be}\big(\tau_{h_{1-}}^{\Be} \geq (1-3\rho)m\big) + K_{15} e^{-(1-2\rho)K_{16}m} \notag \\
  &\leq (1+\theta) P_{h_2}^{\Be}\big(\tau_{h_{1-}}^{\Be} \geq (1-3\rho)m\big), \notag
  \end{align}
where the last inequality follows from \eqref{h2h10} and \eqref{GammaRV}, for large $m$.  Thus \eqref{BesselvsBI} is proved.  We have similarly from \eqref{taugap} (with $\tau_{h_{1-}}$ trivially replaced by $\tau_{h_{1+}}$) that 
\begin{align} \label{BesselvsBI4}
  P_{h_2}^{\Be}&\big(\tau_{h_{1+}}^{\Be} \geq (1+\rho)m\big) \\
  &\leq P_{h_2}^{\Be}\big(\tau_{h_{1+}}^{\BI} \geq m\big)
    + P_{h_2}^{\Be}\big(\tau_{h_{1+}}^{\Be} \geq (1+\rho)m, \left| \tau_{h_{1+}}^{\Be} - \tau_{h_{1+}}^{\BI} \right| > \rho  \tau_{h_{1+}}^{\BI} \big)
    \notag \\
  &\leq P_{h_2}^{\BI}\big(\tau_{h_{1+}}^{\BI} \geq m\big) + K_{15}e^{-K_{16}m} \notag \\
  &\leq \frac{1}{1-\theta} P_{h_2}^{\BI}\big(\tau_{h_{1+}}^{\BI} \geq m\big), \notag
  \end{align}
so \eqref{BesselvsBI2}, and thus Claim 2, are also proved.

{\it Proof of Claim 1.}  From \eqref{BesselvsBI2}, \eqref{h1vs0part2} and then\eqref{GammaRV}, we have
\begin{align*}
  P_{h_2}^{\BI}(\tau_{h_{1+}} \geq m) &\geq (1-\theta) P_{h_2}^{\Be}\big(\tau_{h_{1+}} \geq (1+\rho)m\big) \\
  &\geq (1-2\theta) P_{h_2}^{\Be}\big(\tau_0 \geq (1+2\rho)m\big) \\
  &\geq K_{17} \ep_2^{2\kappa},
  \end{align*}
and it is straightforward to replace $\tau_{h_{1+}}$ here by $\tau_{h_1}$, proving the second inequality in \eqref{claim2}.  The first inequality there is trivial.

The first inequality in \eqref{claim1} is also trivial, so we prove the second one.  Using \eqref{claim2} and slight variants of \eqref{Nbound} and \eqref{Sbound} we get that for large $m$,
\begin{align*}
  P_{h_2}(\tau_{h_1} \geq m) &\geq P_{h_2}^*\big( \tau_{h_{1+}}^{\BI} \geq m, N(m) \leq \eta \sqrt{m} \big) \\
  &= P_{h_2}^*\big( \tau_{h_{1+}}^{\BI} \geq m \big) - P_{h_2}^*\big( \tau_{h_{1+}} \geq m, N(m) > \eta \sqrt{m} \big) \\
  &\geq K_{10} \ep_2^{2\kappa} - e^{-K_{2} \sqrt{m}} \\
  &\geq \half K_{10} \ep_2^{2\kappa},
  \end{align*}
completing the proof of Claim 1.

This also completes the proof of \eqref{exctail}, as noted after Claim 2.\\[10 pt]

\section{Proof of \eqref{excpoint} and \eqref{exctailsv}}

For even numbers $0<m<n$, let
\[
  f_m = P_0(\tau_0 = m), \qquad A_{m,n} = \frac{2}{n-m+2} \sum_{j=m}^n f_j.
  \]
$A_{m,n}$ is the average of the even-index $f_j$'s with $j \in [m,n]$.  We use \eqref{exctail2} and the following convexity property of $\{f_m\}$. 

\begin{lemma} \label{convexity}
For all even numbers $0<k<m$,
\begin{equation} \label{convex}
  f_m \leq\frac{f_{m+k} + f_{m-k}}{2}
  \end{equation}
and
\begin{equation} \label{average}
  f_m \leq A_{m-k,m+k}.
  \end{equation}
\end{lemma}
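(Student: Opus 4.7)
The plan is to first establish the stronger log-convexity statement $f_m^2 \leq f_{m-k}\,f_{m+k}$, from which \eqref{convex} follows by the AM-GM inequality, and then derive \eqref{average} by pairing even-index terms symmetrically about $m$. Since $p_0=1$, for even $m\geq 2$ we have $f_m = P_1(\tau_0 = m-1) = q_1\,[M^{m-2}]_{1,1}$, where $M$ denotes the substochastic birth-death kernel on $\{1,2,\dots\}$ obtained by killing the walk upon hitting $0$ (i.e.\ the $q_1$-mass exiting state $1$ is lost). The walk on $\ZZ_+$ is reversible with respect to the measure $\mu$ satisfying detailed balance $\mu_x p_x = \mu_{x+1}q_{x+1}$, and this reversibility passes to $M$, giving $[M^j]_{x,1} = (\mu_1/\mu_x)\,[M^j]_{1,x}$ for all $x\geq 1$ and $j\geq 0$.

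Setting $n=m-2$ (even) and assuming $k$ is even with $0<k\leq n$, Chapman--Kolmogorov together with reversibility yields
\[
  [M^n]_{1,1} \;=\; \sum_{x\geq 1} [M^{(n-k)/2}]_{1,x}\,[M^{(n+k)/2}]_{x,1}
  \;=\; \sum_{x\geq 1} \frac{\mu_1}{\mu_x}\,[M^{(n-k)/2}]_{1,x}\,[M^{(n+k)/2}]_{1,x}.
\]
Applying Cauchy--Schwarz with the weights $\mu_1/\mu_x\geq 0$, and then running the reversibility identity in reverse on each of the two resulting squared sums, produces $[M^n]_{1,1}^2 \leq [M^{n-k}]_{1,1}\,[M^{n+k}]_{1,1}$, hence $f_m^2\leq f_{m-k}f_{m+k}$; AM-GM then yields \eqref{convex}.

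For \eqref{average}, the sum defining $A_{m-k,m+k}$ has exactly $k+1$ even-index terms, which arrange as the central term $f_m$ together with $k/2$ symmetric pairs $\{f_{m-i},f_{m+i}\}$ for $i=2,4,\dots,k$. Applying \eqref{convex} with $k$ replaced by each such $i$ gives $f_{m-i}+f_{m+i}\geq 2 f_m$, so summing produces $\sum_j f_j \geq (k+1)f_m$ and hence $A_{m-k,m+k}\geq f_m$.

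The main point that needs care is the Chapman--Kolmogorov splitting: the exponents $(n\pm k)/2$ must be nonnegative integers, which is precisely what the parity hypothesis ($k$ and $m$ both even, $0<k<m$) is there to guarantee. The boundary case $k=m-2$ is handled by $[M^0]_{1,1}=1$, consistent with $f_2=q_1$; no other obstacle arises, the only nontrivial ingredient being that reversibility is an exact symmetry that turns the spectral-type Cauchy--Schwarz bound into an equality-free inequality without needing any information about the spectral measure itself.
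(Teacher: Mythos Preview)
Your proof is correct. Both you and the paper first establish the log-convexity $f_m^2 \leq f_{m-k}f_{m+k}$ and then deduce \eqref{convex} via AM-GM and \eqref{average} by the obvious symmetric pairing. The route to log-convexity, however, is genuinely different. The paper argues combinatorially: take two independent excursions of length $m$, one started at time $0$ and one at time $k$; since $k$ is even they must meet at some space-time point before the earlier one ends, and switching the two trajectories after the first meeting time yields an excursion of length $m+k$ and one of length $m-k$, giving an injection that proves $f_m^2 \leq f_{m+k}f_{m-k}$. Your argument is the algebraic/spectral one: write $f_m = q_1[M^{m-2}]_{1,1}$ for the killed kernel $M$, split via Chapman--Kolmogorov, and apply Cauchy--Schwarz with reversibility as the inner product. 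Your approach is cleaner and more standard for this particular statement, and makes transparent that only reversibility (not the nearest-neighbor structure) is needed. The paper's path-switching, on the other hand, is the seed for the more delicate Lemma~\ref{latticepath}, which compares $P_l(\tau_0\in\cdot)$ and $P_k(\tau_0\in\cdot)$ for different starting heights $k\neq l$; that comparison does not reduce to a single diagonal matrix entry and is not obviously accessible by your Cauchy--Schwarz method, so the combinatorial argument earns its keep later in the paper.
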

\begin{proof}
Let $\bx = \{x_0,\dots,x_m\}$ be the trajectory of an excursion of length $m$ starting at time 0, and $\bx' = \{x_k',\dots,x_{m+k}'\}$ the trajectory of an excursion of length $m$ starting at time $k$.  (Necessarily, then, $x_0 = x_m = x_k' = x_{m+k}' = 0$ and all other $x_j$ and $x_j'$ are positive.)  Since $k$ is even, there must be an $s \in (k,m)$ with $x_s = x_s'$; let $T = T(\bx,\bx')$ denote the least such $s$ and $D_t = \{(\bx,\bx'): T(\bx,\bx') = t\}$.  For $\bx,\bx' \in D_t$, by switching the two trajectories after time $t$, we obtain an excursion $\by = \by(\bx,\bx') = \{x_0,\dots,x_t,x_{t+1}',\dots,x_{m+k}'\}$ of length $m+k$ and an excursion $\by' = \by'(\bx,\bx') = \{x_k',\dots,x_t',x_{t+1},\dots,x_m\}$ of length $m-k$.  The map $(\bx,\bx') \mapsto (\by,\by')$ is one to one and satisfies
\[
  P_0(\bx) P(\bx' \mid X_k = 0) = P_0(\by) P(\by' \mid X_k = 0).
  \]
It follows that
\begin{align*}
  f_m^2 &= \sum_{t:k<t<m}\ \sum_{(\bx,\bx') \in D_t} P_0(\bx) P(\bx' \mid X_k = 0) \\
  &= \sum_{t:k<t<m}\ \sum_{(\bx,\bx') \in D_t} P_0(\by(\bx,\bx')) P(\by'(\bx,\bx') \mid X_k = 0) \\
  &\leq \left( \sum_{\by} P_0(\by) \right) \left( \sum_{\by'} P(\by' \mid X_k=0) \right) \\
  &= f_{m+k} f_{m-k} \\
  &\leq \left( \frac{f_{m+k} + f_{m-k}}{2} \right)^2.
  \end{align*}
Equation \eqref{average} is an immediate consequence of \eqref{convex}.
\end{proof}

Let $\theta>0$.  Provided $\eta$ is sufficiently small (depending on $\theta$), we have from \eqref{exctail2}, \eqref{psidef} and Lemma \ref{convexity} that for $n$ large and even and  $k = 2\lfloor \eta n/2 \rfloor$,
\begin{align} \label{pointupper}
  P_0(\tau_0 = n) &= f_n \\
  &\leq A_{n-k,n+k} \notag \\
  &= \frac{1}{k+1} P_0\left( (1-\eta)n \leq \tau_0 \leq (1+\eta)n \right) \notag \\
  &\leq (1+\theta) \frac{2^{2-\kappa}\kappa}{K_0 \Gamma(\kappa)} n^{-(\kappa+1)}L(\sqrt{n}). \notag
  \end{align}
In the reverse direction, suppose $f_n < (1-\theta) A_{n-k,n-2}$ for some $0<k<n/2$, with $k,n$ even.  By Lemma \ref{convexity} we have
\begin{align}
  \sum_{j=1}^{k/2} f_{n-2j} &\leq \frac{k}{4} f_n + \half \sum_{j=1}^{k/2} f_{n-4j} \\
  &\leq \frac{1-\theta}{2} \sum_{j=1}^{k/2} f_{n-2j} + \frac{1}{4} \sum_{j=1}^{k/2} f_{n-4j} 
    + \frac{1}{4} \sum_{j=1}^{k/2} \frac{f_{n-4j+2} + f_{n-4j-2}}{2} \notag \\
  &= \frac{1-\theta}{2} \sum_{j=1}^{k/2} f_{n-2j} + \frac{1}{4} \sum_{j=1}^{k/2} f_{n-4j} 
    + \frac{1}{8} \sum_{j=1}^{k/2} f_{n-4j+2} + \frac{1}{8} \sum_{j=2}^{(k+2)/2} f_{n-4j+2} \notag \\
  &= \frac{1-\theta}{2} \sum_{j=1}^{k/2} f_{n-2j} + \frac{1}{4} \sum_{m=2}^k f_{n-2m} + \frac{1}{8} f_{n-2} 
    + \frac{1}{8} f_{n-2k-2}, \notag
  \end{align}
and therefore
\begin{align}
  \frac{1+\theta}{2} \sum_{j=1}^{k/2} f_{n-2j} \leq \frac{1}{4} \sum_{j=1}^k f_{n-2j} + \frac{1}{8} f_{n-2k-2},
  \end{align}
which in the case $k = 2\lfloor \eta n/2 \rfloor$ gives
\begin{align} \label{contra}
  (1+\theta)&P_0\left( (1-\eta)n \leq \tau_0 \leq n-2 \right) \\
  &\leq \half P_0\left( (1 - 2\eta)n \leq \tau_0 \leq n-2 \right)
    + \frac{1}{4} P_0\left(\tau_0 = n - 4\left\lfloor \frac{\eta n}{2} \right\rfloor - 2 \right). \notag
\end{align}
For small $\eta$ and large $n$, this contradicts \eqref{exctail}, showing that we cannot have $f_n < (1-\theta) A_{n-k,n-2}$.  Therefore for large $n$, using \eqref{exctail} we have
\begin{align}
  P_0(\tau_0 = n) &= f_n \\
  &\geq (1-\theta)A_{n-k,n-2} \notag \\
  &= \frac{2(1-\theta)}{k} P_0\left( (1 - \eta)n \leq \tau_0 \leq n-2 \right) \notag \\
  &\geq (1-2\theta) \frac{2^{2-\kappa}\kappa}{K_0 \Gamma(\kappa)} n^{-(\kappa+1)}L(\sqrt{n}). \notag
\end{align}
This and \eqref{pointupper} prove \eqref{excpoint}.

We now prove \eqref{exctailsv}. Let $\tP$ denote the distribution of the Bessel-like RW dual to $P$, that is, the walk with transition probabilities $\tilde{p}_x = q_x, \tilde{q}_x = p_x$ for $x \geq 1$.  In \cite{DFPS97} it is proved that for $n$ even,
\begin{equation} \label{duality}
  P(\tau_0 > n) = \tP(X_n = 0).
  \end{equation}
For $\delta=-1$, the dual walk has drift parameter $\tilde{\delta}=1$, so \eqref{exctailsv} follows by applying \eqref{excpoint} and \eqref{c2case3} to the dual walk.

\section{Proof of Theorem \ref{hittime}}

We want to use \eqref{reverse} so we need to approximate 
\[
  f_n^{(k)} = P_k(\tau_0 = n) \quad \text{ and} \quad P_k(X_n=0).
  \]
We sometimes omit the superscript $(k)$ when it is equal to 0.  We start with the following relative of Lemma \ref{convexity}.  

\begin{lemma} \label{latticepath}
Let $0 \leq p \leq q \leq r \leq s \leq \infty$ and $0 \leq k < l$.  Then for $l-k$ even,
\begin{equation} \label{intervals}
  P_l(\tau_0 \in [p,q]) P_k(\tau_0 \in [r,s]) \leq P_l(\tau_0 \in [r,s]) P_k(\tau_0 \in [p,q]),
\end{equation}
and for $l-k$ odd,
\begin{equation} \label{intervals2}
  P_l(\tau_0 \in [p,q]) P_k(\tau_0 \in [r,s]) \leq P_l(\tau_0 \in [r+1,s+1]) P_k(\tau_0 \in [p-1,q-1]).
\end{equation}
\end{lemma}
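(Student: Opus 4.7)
The plan is to use a path-switching argument in the style of Lemma~\ref{convexity}. Let $S$ denote the set of ordered pairs $(\bx,\by)$ of excursions with $\bx$ running from $l$ to $0$ in time $\tau_0(\bx)\in[p,q]$ and $\by$ running from $k$ to $0$ in time $\tau_0(\by)\in[r,s]$. I will construct a map $\Phi$ from $S$ into the analogous set with the two time intervals interchanged (up to a $\pm 1$ shift in the odd case) that is injective and preserves the product $P_l(\bx)P_k(\by)$ of path probabilities; summing over $S$ then yields the stated inequality.

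\textbf{Even case.} When $l-k$ is even, $x_t-y_t$ has constant even parity. Since $x_0-y_0=l-k>0$ while $x_{\tau_0(\bx)}-y_{\tau_0(\bx)}\leq 0$ (using $\tau_0(\bx)\leq q\leq r\leq\tau_0(\by)$), there is a first time $T$ with $x_T=y_T$. Set
\[
  \bx'=(x_0,\ldots,x_T,y_{T+1},\ldots,y_{\tau_0(\by)}),\qquad \by'=(y_0,\ldots,y_T,x_{T+1},\ldots,x_{\tau_0(\bx)}).
\]
Then $\bx'$ is an excursion from $l$ with $\tau_0(\bx')=\tau_0(\by)\in[r,s]$, $\by'$ is an excursion from $k$ with $\tau_0(\by')=\tau_0(\bx)\in[p,q]$, and since $x_T=y_T$ the product of transition probabilities along the two paths is merely reassigned: $P_l(\bx)P_k(\by)=P_l(\bx')P_k(\by')$. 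Injectivity: if $(\bx_i,\by_i)$ with meeting times $T_i$ both map to $(\bx',\by')$, then on $[0,T_1\wedge T_2]$ the two pairs agree, so the pair with larger meeting time already meets at the smaller $T$, forcing $T_1=T_2$, and the inverse formulas then give $(\bx_1,\by_1)=(\bx_2,\by_2)$. Summing over $S$ yields \eqref{intervals}.

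\textbf{Odd case.} When $l-k$ is odd, $x_t-y_t$ is always odd, so the paths never coincide. Because $\tau_0(\bx)<\tau_0(\by)$ strictly (the parities differ), the gap must transition from $+1$ to $-1$; let $T$ be the first time this occurs, i.e.\ $x_T=y_T+1$, $x_{T+1}=y_T$ and $y_{T+1}=x_T$. The time-shifted swap
\[
  \bx'_t=\begin{cases}x_t,&0\leq t\leq T,\\ y_{t-1},&T+1\leq t\leq\tau_0(\by)+1,\end{cases}\qquad \by'_t=\begin{cases}y_t,&0\leq t\leq T,\\ x_{t+1},&T\leq t\leq\tau_0(\bx)-1,\end{cases}
\]
gives an excursion $\bx'$ from $l$ with $\tau_0(\bx')=\tau_0(\by)+1\in[r+1,s+1]$ and an excursion $\by'$ from $k$ with $\tau_0(\by')=\tau_0(\bx)-1\in[p-1,q-1]$. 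A direct calculation shows $P_l(\bx)P_k(\by)=P_l(\bx')P_k(\by')$: the down-step factor $q_{x_T}$ from $P_l(\bx)$ reappears as the step $\bx'_T\to\bx'_{T+1}$ in $P_l(\bx')$, the up-step factor $p_{y_T}$ from $P_k(\by)$ reappears as the step $\bx'_{T+1}\to\bx'_{T+2}=x_T$ in $P_l(\bx')$, and the remaining transition factors pair up segment-wise between $\bx\leftrightarrow\by'$ and $\by\leftrightarrow\bx'$. Injectivity follows by recovering $T$ from $(\bx',\by')$ via the identifying conditions $\bx'_T-\by'_T=1$, $\bx'_{T+1}=\by'_T$ and $\bx'_{T+2}=\bx'_T$; a short gap-pattern analysis shows that two preimages with distinct crossing times would impose incompatible next-step constraints on the earlier one. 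Summing over $S$ yields \eqref{intervals2}.

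The main obstacle is the odd case: one must discover the correct time-shifted swap (offset so that $\bx'$ gains a step and $\by'$ loses one), then patiently verify both that the transition-probability factors cancel after the shift and that the recovery of $T$ from the image is unique.
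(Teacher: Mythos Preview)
Your even-case argument is exactly the paper's. For the odd case, the paper takes a cleaner route: rather than building a bespoke swap at the $+1\to-1$ crossing, it simply starts the $k$-path at time $1$ instead of time $0$ (``shifted one unit to the right''). This makes the height difference between the two paths even at every common time, so the even-case switching applies verbatim; after the switch, un-shifting the new $k$-path produces the $\pm 1$ offsets in the hitting-time intervals. Probability preservation and injectivity are then inherited for free from the even case, with no additional checking. Your direct construction is equivalent after unwinding the shift, but it forces you to redo the transition-factor bookkeeping and to argue injectivity from scratch---and your injectivity sketch (``a short gap-pattern analysis'') is not actually a proof: the three identifying conditions you list can hold at times other than the true $T$, so one needs a genuine first-meeting argument as in the even case, which the time-shift gives automatically.

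There is also a small boundary case your construction misses: when $k=0$, $l=1$ and $\bx=(1,0)$, you get $T=0$ and then $\bx'_1=y_0=0$, so $\bx'$ hits $0$ at time $1$ rather than $\tau_0(\by)+1$, and the image pair is not in the target set. This is easily patched (e.g.\ reduce $k=0$ to $k=1$ via $P_0(\tau_0=m)=P_1(\tau_0=m-1)$, since $p_0=1$), and the paper's one-line treatment glosses over the same point.
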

\begin{proof}
Suppose first that $l-k$ is even.  Consider a lattice path $\bx$ starting at $(0,k)$ in space-time which first hits the horizontal axis at a time in $[r,s]$, and a lattice path $\bx'$ starting at $(0,l)$ which first hits the axis at a time in $[p,q]$.  Since $l-k$ is even, there must be a $t \in (0,q]$ with $x_t = x_t'$.  Switching the two trajectories after the first such $t$ and proceeding as in Lemma \ref{convexity} we obtain \eqref{intervals}.

For $l-k$ odd we repeat this argument but with the path $\bx$ shifted one unit to the right, that is, started from $(1,k)$.
\end{proof}

Here are some special cases of interest for Lemma \ref{latticepath}, particularly when comparing point versus interval probabilities for $\tau_0$.

\begin{corollary} \label{latticepath2}
(i) For all $0 \leq k<l \leq n$ and $j>0$ with $n-l$ and $n+j-k$ even,
\begin{equation} \label{lattice1}
  \frac{f_{n+j}^{(k)}}{f_n^{(k)}} \leq \frac{f_{n+j}^{(l)}}{f_n^{(l)}} \quad \text{if $l-k$ is even},
  \end{equation}
and
\begin{equation} \label{lattice2}
  \frac{f_{n+j}^{(k)}}{f_{n-1}^{(k)}} \leq \frac{f_{n+j+1}^{(l)}}{f_n^{(l)}} \quad \text{if $l-k$ is odd}.
  \end{equation}
(ii) For all $0 \leq l \leq m$,
\begin{equation} \label{lattice3}
  P_l(\tau_0 = m) \leq M_l P_0(\tau_0 = m) \quad \text{if $l$ is even},
\end{equation}
and
\begin{equation} \label{lattice4}
  P_l(\tau_0 = m) \leq M_l P_0(\tau_0 = m-1) \quad \text{if $l$ is odd}.
\end{equation}
(iii) For all $l>0$ and $0 \leq p < q < m$,
\begin{equation} \label{lattice5}
  P_l(\tau_0 = m) \geq M_l P_0(\tau_0 = m) \frac{ P_0(\tau_0 - \tau_l \in [p,q]) }{ P_0(\tau_0 \in [p,q]) } \quad \text{if $l$ is even},
\end{equation}
and
\begin{equation} \label{lattice5a}
  P_l(\tau_0 = m) \geq M_l P_0(\tau_0 = m-1) \frac{ P_0(\tau_0 - \tau_l \in [p,q]) }{ P_0(\tau_0 \in [p-1,q-1]) } \quad \text{if $l$ is odd}.
\end{equation}

\end{corollary}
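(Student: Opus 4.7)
The plan is to derive all six inequalities from Lemma~\ref{latticepath} combined with the strong Markov identity
\begin{equation} \label{smidentity}
  P_l(\tau_0 \in [p,q]) = M_l\, P_0(\tau_0 - \tau_l \in [p,q],\ \tau_l < \tau_0),
\end{equation}
which is immediate from $P_0(\tau_l < \tau_0) = 1/M_l$ (see \eqref{height}) and the strong Markov property at $\tau_l$. Specializing \eqref{smidentity} to $[p,q] = [r,\infty)$ and using that $\{\tau_l < \tau_0,\ \tau_0 - \tau_l \geq r\} \subseteq \{\tau_0 \geq r\}$ yields the auxiliary tail bound
\begin{equation} \label{tailaux}
  P_l(\tau_0 \geq r) \leq M_l\, P_0(\tau_0 \geq r),
\end{equation}
valid for every $r \geq 0$.

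Part~(i) follows at once by taking both intervals in Lemma~\ref{latticepath} to be singletons, $[p,q] = \{n\}$ and $[r,s] = \{n+j\}$. The even case \eqref{intervals} reads $f_n^{(l)} f_{n+j}^{(k)} \leq f_{n+j}^{(l)} f_n^{(k)}$, which rearranges to \eqref{lattice1}; the odd case \eqref{intervals2} reads $f_n^{(l)} f_{n+j}^{(k)} \leq f_{n+j+1}^{(l)} f_{n-1}^{(k)}$, which rearranges to \eqref{lattice2}.

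For part~(ii), apply the lemma with $k=0$ and $[p,q]=\{m\}$, taking $[r,s]$ to be a tail interval strictly above $m$. In the even case, $[r,s] = [m+2,\infty)$ in \eqref{intervals} gives
\[
  P_l(\tau_0 = m)\, P_0(\tau_0 \geq m+2) \leq P_l(\tau_0 \geq m+2)\, P_0(\tau_0 = m),
\]
and bounding the first factor on the right via \eqref{tailaux} produces \eqref{lattice3}. For $l$ odd, the same recipe with $[r,s] = [m+1,\infty)$ in \eqref{intervals2} yields
\[
  P_l(\tau_0 = m)\, P_0(\tau_0 \geq m+1) \leq P_l(\tau_0 \geq m+2)\, P_0(\tau_0 = m-1);
\]
since $P_0(\tau_0 = m+1) = 0$, we have $P_0(\tau_0 \geq m+1) = P_0(\tau_0 \geq m+2)$, and \eqref{tailaux} then gives \eqref{lattice4}.

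For part~(iii), we reverse the roles of the intervals: keep $k=0$ and let $[p,q]$ be the interval from the statement, arranging $[r,s]$ so that (after the odd-case shift) the factor $\{m\}$ appears on the $P_l$ side. In the even case, $[r,s] = \{m\}$ in \eqref{intervals} gives
\[
  P_l(\tau_0 \in [p,q])\, P_0(\tau_0 = m) \leq P_l(\tau_0 = m)\, P_0(\tau_0 \in [p,q]);
\]
substituting \eqref{smidentity} for $P_l(\tau_0 \in [p,q])$ and rearranging produces \eqref{lattice5}. In the odd case, $[r,s] = \{m-1\}$ in \eqref{intervals2} gives $[r+1,s+1] = \{m\}$ and
\[
  P_l(\tau_0 \in [p,q])\, P_0(\tau_0 = m-1) \leq P_l(\tau_0 = m)\, P_0(\tau_0 \in [p-1,q-1]);
\]
the same substitution yields \eqref{lattice5a}. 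The only delicate point throughout is to keep the parities of $m$, $l$, and the chosen interval endpoints compatible so that every probability appearing is a priori nonzero; beyond this bookkeeping, no further input is required once \eqref{smidentity} and \eqref{tailaux} are in hand.
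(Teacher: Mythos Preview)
Your argument is essentially the paper's: part~(i) is identical, part~(iii) is identical (your \eqref{smidentity} is exactly the paper's \eqref{1overMl}), and part~(ii) combines Lemma~\ref{latticepath} with the tail bound \eqref{tailaux} just as the paper does via \eqref{excdecomp2}, the only cosmetic difference being your choice $[r,s]=[m+2,\infty)$ versus the paper's $[m,\infty)$.

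There is one slip in the odd case of~(ii). When $l$ is odd, the nontrivial situation is $m$ odd (otherwise $P_l(\tau_0=m)=0$), so $m+1$ is even and $P_0(\tau_0=m+1)$ need \emph{not} vanish; your justification for $P_0(\tau_0\geq m+1)=P_0(\tau_0\geq m+2)$ therefore fails. The repair is immediate: from \eqref{tailaux} you get $P_l(\tau_0\geq m+2)\leq M_l\,P_0(\tau_0\geq m+2)\leq M_l\,P_0(\tau_0\geq m+1)$, and cancelling $P_0(\tau_0\geq m+1)>0$ gives \eqref{lattice4}. (This is exactly how the paper handles it, using $P_l(\tau_0\geq m+1)\leq P_l(\tau_0\geq m)$ on the other side.)
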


By Corollary \ref{latticepath2}, to show that $P_l(\tau_0 = m)$ can be well approximated by $M_l P_0(\tau_0 = m)$ (or $M_l P_0(\tau_0 = m-1)$, depending on parity), it is sufficient to find, given $m$, values $p<q \leq m$ for which the fraction in \eqref{lattice5} or \eqref{lattice5a} is almost 1.  We will see that this can be done for $m \gg l^2$.

\begin{proof}[Proof of Corollary \ref{latticepath2}]
(i) Take $p=q=n$ and $r=s=n+j$ in Lemma \ref{latticepath} to get $f_n^{(l)} f_{n+j}^{(k)} \leq f_{n+j}^{(l)} f_n^{(k)}$ in the case of even $l-k$, and similarly for odd $l-k$.

(ii) Consider even $l$.  We may assume $m$ is also even, for otherwise the left side of \eqref{lattice3} is 0.  Applying Lemma \ref{latticepath} with $k=0$, $p=q=r=m$ and $s=\infty$ we get
\begin{equation} \label{lattice6}
  P_l(\tau_0 = m) \leq \frac{ P_l(\tau_0 \geq m) }{ P_0(\tau_0 \geq m) } P_0(\tau_0 = m),
\end{equation}
while by \eqref{excdecomp2},
\begin{align}
  \frac{1}{M_l} P_l(\tau_0 \geq m) \leq P_0(\tau_0 \geq m).
  \end{align}
Together these prove \eqref{lattice3}.  For odd $l$ we may assume $m$ is odd, and in place of \eqref{lattice6} we get 
\begin{equation} \label{lattice7}
  P_l(\tau_0 = m) \leq \frac{ P_l(\tau_0 \geq m+1) }{ P_0(\tau_0 \geq m) } P_0(\tau_0 = m-1),
\end{equation}
and the rest of the proof is essentially unchanged, since $P_l(\tau_0 \geq m+1) \leq P_l(\tau_0 \geq m)$.

(iii) Consider even $l$.  We may assume $m$ is even, for otherwise the right side of \eqref{lattice5} is 0.  Applying Lemma \ref{latticepath} with $k=0$, $[r,s] = \{m\}$ we obtain
\begin{equation} \label{lattice8}
  P_l(\tau_0 = m) \geq \frac{ P_l(\tau_0 \in [p,q]) }{ P_0(\tau_0 \in [p,q]) } P_0(\tau_0 = m),
\end{equation}
while by \eqref{Hdef},
\begin{equation} \label{1overMl}
  \frac{1}{M_l} P_l(\tau_0 \in [p,q]) = P_0(\tau_l < \tau_0) P_l(\tau_0 \in [p,q]) = P_0(\tau_0 - \tau_l \in [p,q]),
\end{equation}
and together these prove \eqref{lattice5}.  For odd $l$ we may again assume $m$ is odd and take $[r,s] = \{m-1\}$, so that in place of \eqref{lattice8}, using \eqref{1overMl}  we get 
\begin{align} \label{lattice9}
  P_l(\tau_0 = m) &\geq \frac{ P_l(\tau_0 \in [p,q]) }{ P_0(\tau_0 \in [p-1,q-1]) } P_0(\tau_0 = m-1) \\
  &= M_l  \frac{ P_0(\tau_0 - \tau_l \in [p,q]) }{ P_0(\tau_0 \in [p-1,q-1]) } P_0(\tau_0 = m-1). \notag
\end{align}
\end{proof}

Note that if we take $k=0$ and $j \ll n$ in \eqref{lattice1}, we see from \eqref{excpoint} that the left side of \eqref{lattice1} is close to 1, so the right side cannot be much less than 1 for any $l>0$.

For the Bessel process we have by \eqref{GammaRV} that for $0<a<b$, recalling $\kappa = (1+\delta)/2$,
\begin{align} \label{Besselcase}
  P_k^{\Be}(\tau_0 \in [a,b]) &= \int_{k^2/2b}^{k^2/2a} \frac{1}{\Gamma(\kappa)}u^{\kappa-1} e^{-u}\ du.
  \end{align}
As a step toward approximating $P_k(X_n=0)$ we have the following ``interval'' version of Theorem \ref{hittime}, for midrange starting heights ($k$ of order $\sqrt{m}$); for these we apparently cannot get sharp results from Corollary \ref{latticepath2}(ii) and (iii).

\begin{proposition} \label{intervalmid}
Let $\theta>0,\chi>0$, $0 < \Dm < \DM$ and $0<a<b$.  Provided $\chi$ is sufficiently small (depending on $\theta$), $\DM$ is sufficiently small (depending on $\theta,\chi$), 
\begin{equation} \label{ab}
  \frac{b-a}{b} \in [\Delta_{\min},\Delta_{\max}],
  \end{equation}
the starting height $k$ is midrange, that is,
\begin{equation} \label{mk2}
  \sqrt{a\chi} \leq k \leq \sqrt{a/\chi},
  \end{equation}
and $a$ is sufficiently large (depending on $\theta,\chi,\Dm,\DM$), we have
\begin{align} \label{sandwich7}
  (1-\theta) &P_k^{\Be}(\tau_0 \in [a,b]) \leq  P_k(\tau_0 \in [a,b]) 
  \leq (1+\theta) P_k^{\Be}(\tau_0 \in [a,b]) 
  \end{align}
and
\begin{align} \label{sandwich6}
  \frac{1-\theta}{\Gamma(\kappa)} \frac{b-a}{b} \left( \frac{k^2}{2a} \right)^\kappa e^{-k^2/2a}
    &\leq P_k(\tau_0 \in [a,b]) \\
  &\leq \frac{1+\theta}{\Gamma(\kappa)} \frac{b-a}{b} \left( \frac{k^2}{2a} \right)^\kappa e^{-k^2/2a}. \notag
  \end{align}
  \end{proposition}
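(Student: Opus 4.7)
The plan is to parallel the proof of \eqref{exctail}: compare $\{X_n\}$ to the imbedded random walk $\{X_n^{\BI}\}$ from Section \ref{Coupling}, and then compare $\{X_n^{\BI}\}$ to the underlying Bessel process $Y_t$ via the time-change argument of Claim 2, now with both walks starting at a midrange height $k$ rather than at $0$. Once \eqref{sandwich7} is established, \eqref{sandwich6} follows from \eqref{Besselcase}: the integrand $\tfrac{1}{\Gamma(\kappa)} u^{\kappa-1} e^{-u}$ is continuous and bounded away from $0$ and $\infty$ on the compact interval $[\chi/2, 1/(2\chi)]$, and the midrange assumption \eqref{mk2} places $k^2/2a$ in this interval; for $\DM$ small, $k^2/2b$ is close to $k^2/2a$, so the integrand is nearly constant on the range of integration, and a direct computation gives \eqref{sandwich6} to within error $\theta$.

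For \eqref{sandwich7}, the idea is to introduce an auxiliary threshold $h_0 = \ep_1 \sqrt{a}$ with $\ep_1 \ll \sqrt{\chi}$ and replace the event $\{\tau_0 \in [a,b]\}$ by $\{\tau_{h_0} \in [a,b]\}$ for both the Bessel-like walk and the Bessel process. By Lemma \ref{strip} (applied to the reversed path from $\tau_0$ back to $\tau_{h_0}$), the descent time $\tau_0 - \tau_{h_0}$ is at most $O(h_0^2) = O(\ep_1^2 a)$ except on an event of exponentially small probability, and the analogous fact holds for the Bessel process (using \eqref{GammaRV} from $h_0$). This discrepancy is negligible compared with $\Dm a$ once $\ep_1$ is small, so it suffices to compare $P_k(\tau_{h_0} \in [a,b])$ with $P_k^{\Be}(\tau_{h_0} \in [a,b])$.

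Above height $h_0$ the coupling of $\{X_n\}$ to $\{X_n^{\BI}\}$ is effective: the alarm rate per step is $o(1/h_0)$ and by \eqref{discrep} discrepancies are controlled by the current gap times $1/h_0^2$. Repeating the geometric-sum argument of \eqref{Nbound}--\eqref{Sbound} (with $h_{1-}$ replaced by $h_0$) shows that the total number of missteps up to $\tau_{h_0}^{\BI} \wedge \tau_{h_0}$ is at most $\eta\sqrt{a}$ except on an event of probability $e^{-K \sqrt{a}}$, so under the coupling $|\tau_{h_0} - \tau_{h_0}^{\BI}| \le \eta\sqrt{a}$ with the same probability. The time-change estimate \eqref{taugap}, applied at threshold $h_0$ in place of $h_{1-}$, then compares $\tau_{h_0}^{\BI}$ to $\tau_{h_0}^{\Be}$ up to relative error $\rho$ with exponentially small failure probability.

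The main obstacle is ensuring that the accumulated time errors $O(\ep_1^2 a) + O(\eta \sqrt{a}) + O(\rho \cdot a)$ are all small relative to the minimum interval length $\Dm a$. By differentiating \eqref{Besselcase} in $t$, the Bessel hitting-time density at time $t \in [a,b]$ is of order $k^{2\kappa} t^{-\kappa-1} e^{-k^2/2t}$, which under \eqref{mk2} is comparable to $1/a$; consequently a time perturbation of size $\epsilon a$ changes $P_k^{\Be}(\tau_0 \in [a,b])$ by $O(\epsilon / \Dm)$ relative to its value. Choosing $\rho, \ep_1, \eta$ each sufficiently small in terms of $\theta, \chi, \Dm$ (and then $a$ sufficiently large) delivers \eqref{sandwich7}, and hence also \eqref{sandwich6}.
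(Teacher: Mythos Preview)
Your overall strategy is the same as the paper's: introduce a low auxiliary threshold, couple $\{X_n\}$ to $\{X_n^{\BI}\}$ above it, and compare $\{X_n^{\BI}\}$ to the Bessel process via the time-change estimate from Claim~2. The paper's sandwich \eqref{toprove2} carries this out with threshold $\zeta k$ (and shifted versions $(\zeta\pm 2\beta)k$) in place of your $h_0=\ep_1\sqrt{a}$; since $k$ is midrange these are of the same order, so either choice is fine.

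However, one step in your sketch does not follow. From ``the total number of missteps up to $\tau_{h_0}^{\BI}\wedge\tau_{h_0}$ is at most $\eta\sqrt{a}$'' you conclude ``$|\tau_{h_0}-\tau_{h_0}^{\BI}|\le\eta\sqrt{a}$.'' A misstep bound controls the \emph{spatial} gap $|X_n-X_n^{\BI}|\le 2N(n)$, not the \emph{temporal} gap between the two hitting times. Knowing that at time $\tau_{h_0}^{\BI}$ the walk $\{X_n\}$ sits within $2\eta\sqrt{a}$ of $h_0$ tells you nothing directly about how much longer (or less) it takes $\{X_n\}$ to reach $h_0$; that residual time is of order $a$, not $\sqrt{a}$. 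The paper repairs exactly this point by comparing $\tau_{\zeta k}^{\BI}$ not with $\tau_{\zeta k}$ but with $\tau_{(\zeta\pm 2\beta)k}$: if the spatial gap is below $\beta k$ at time $\tau_{\zeta k}^{\BI}$, then $\{X_n\}$ has not yet hit $(\zeta-2\beta)k$ and lies in $((\zeta-2\beta)k,(\zeta+2\beta)k)$, after which a separate restart estimate (the analogue of \eqref{finish}, using the already-proved tail bound \eqref{exctail}) shows the remaining descent to $(\zeta-2\beta)k$ takes at most $\rho b$ with high probability. You need this two-level device (or an equivalent) to convert the spatial coupling error into a controlled time shift; as written, your middle paragraph asserts the conclusion without supplying it.
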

  
\begin{proof}
Let $0<\rho<\Delta_{\min}/8$ and $\zeta>2\beta>0$.  We always select our constants in the following manner:  $\theta$ is given; we choose $\chi$ then $\Delta_{\max}$, and then $\Delta_{\min} < \Delta_{\max}$ is arbitrary, then we choose $\rho$ and then $\zeta$ and $\beta$ (which appear in \eqref{toprove2} below.)  Finally we choose $[a,b]$ as specified.  Each choice may depend only on the preceding choices, and when we say a parameter is ``sufficiently large'' (or small), the required size may depend on the previous choices.

The general outline is similar to the proof of \eqref{exctail}.  Analogously to \eqref{toprove}, we will establish the following sequence of ten inequalities:
\begin{align} \label{toprove2}
  (1-&6\theta)P_k^{\Be}(\tau_0^{\Be} \in [a,b]) \\
  &\leq (1-5\theta)P_k^{\Be}\big(\tau_0^{\Be} \in [(1+2\rho)a,(1-3\rho)b] \big) \notag \\
  &\leq (1-4\theta)P_k^{\Be}\big(\tau_{\zeta k}^{\Be} \in [(1+\rho)a,(1-3\rho)b] \big) \notag \\
  &\leq (1-3\theta) P_k^{\BI}\big( \tau_{\zeta k}^{\BI} \in [a,(1-2\rho)b] \big) \notag\\
  &\leq (1-\theta) P_k\big( \tau_{(\zeta-2\beta)k} \in [a,(1-\rho)b] \big) \notag \\
  &\leq P_k(\tau_0 \in [a,b]) \notag \\
  &\leq (1+\theta) P_k\big( \tau_{(\zeta+2\beta)k} \in [(1-\rho)a,b] \big) \notag \\
  &\leq (1+4\theta)P_k^{\BI}\big( \tau_{\zeta k}^{\BI} \in [(1-\rho)a,(1+\rho)b] \big) \notag \\
  &\leq (1+5\theta)P_k^{\Be}(\tau_{\zeta k}^{\Be} \in [(1-2\rho)a,(1+2\rho)b]) \notag \\
  &\leq (1+7\theta)P_k^{\Be}(\tau_0^{\Be} \in [(1-2\rho)a,(1+3\rho)b]) \notag \\
  &\leq (1+8\theta)P_k^{\Be}(\tau_0^{\Be} \in [a,b]). \notag
  \end{align}
As with \eqref{toprove}, this should be viewed as five ``sandwich'' bounds on $P_k(\tau_0 \in [a,b])$, with the outermost sandwich yielding the desired result.

Provided $\Delta_{\max}/\theta\chi$ is sufficiently small and the second inequality in \eqref{mk2} holds, the gamma density
\[
  f_\kappa(u) =  \frac{1}{\Gamma(\kappa)}u^{\kappa-1} e^{-u}, \quad u \geq 0,
  \]
satisfies
\[
  (1-\theta) f_\kappa\left( \frac{k^2}{2a} \right) \leq f_\kappa(u) \leq (1+\theta) f_\kappa\left( \frac{k^2}{2a} \right) \quad \text{for all } u \in    
    \left[ \frac{k^2}{2b},\frac{k^2}{2a} \right].
  \]
Then by \eqref{Besselcase}, 
\begin{equation} \label{Bessellower}
  P_k^{\Be}(\tau_0 \in [a,b]) \geq \frac{1-\theta}{\Gamma(\kappa)} \frac{b-a}{b} \left( \frac{k^2}{2a} \right)^\kappa e^{-k^2/2a} 
    \end{equation}
and, using also the second inequality in \eqref{mk2},
\begin{equation} \label{Besselupper}
  P_k^{\Be}(\tau_0 \in [a,b]) \leq \frac{1+\theta}{\Gamma(\kappa)} \frac{b-a}{b} \left( \frac{k^2}{2a} \right)^\kappa e^{-k^2/2a} .
  \end{equation}
Therefore \eqref{sandwich6} follows from \eqref{sandwich7}.  The inequalities \eqref{Bessellower} and \eqref{Besselupper}, with minor modifications made to $\theta,a$ and $b$, also prove the first and last inequalities in \eqref{toprove2}, provided $\rho$ is suficiently small (depending on $\theta,\Dm,\chi$.)  

Turning to the 2nd and 9th inequalities in \eqref{toprove2}, provided $\zeta^2/\rho \chi$ is sufficiently small (depending on $\theta$), using \eqref{Besselcase} we have
\begin{align} \label{zetakvs0}
  P_k^{\Be}(\tau_0 \in [(1-2\rho)a,(1+3\rho)b]) &\geq P_k^{\Be}\big(\tau_{\zeta k} \in [(1-2\rho)a,(1+2\rho)b] \big) 
    P_{\zeta k}^{\Be}\big( \tau_0 \leq \rho b \big) \\
  &= P_k^{\Be}\big(\tau_{\zeta k} \in [(1-2\rho)a,(1+2\rho)b] \big) \int_{\zeta^2k^2/2\rho b}^\infty 
    \frac{1}{\Gamma(\kappa)}u^{\kappa-1} e^{-u}\ du \notag\\
  &\geq (1-\theta) P_k^{\Be}\big(\tau_{\zeta k} \in [(1-2\rho)a,(1+2\rho)b] \big) . \notag
  \end{align}
This proves the 9th inequality in \eqref{toprove2}.
In the other direction, 
\begin{align} \label{zetakvs0b}
  P_k^{\Be}(\tau_0 \in [(1+2\rho)a,(1-3\rho)b]) &\leq P_k^{\Be}\big(\tau_{\zeta k} \in [(1+\rho)a,(1-3\rho)b] \big) 
    + P_{\zeta k}^{\Be}\big( \tau_0 > \rho a \big).
  \end{align}
From \eqref{ab}, \eqref{mk2} and \eqref{Bessellower}, 
\begin{equation} \label{Bessellower2}
  P_k^{\Be}(\tau_0 \in [(1+2\rho)a,(1-3\rho)b]) 
    \geq \frac{(1-\theta)\Dm}{\Gamma(\kappa)} \left( \frac{k^2}{2a} \right)^\kappa e^{-1/2\chi},
    \end{equation}
and hence by \eqref{GammaRV}, provided $\zeta^2/\rho$ is sufficiently small (depending on $\theta,\Dm,\chi$),
\begin{align} \label{Besselupper2}
  P_{\zeta k}^{\Be}\big( \tau_0 > \rho a \big) &= \int_0^{\zeta^2 k^2/2\rho a} \frac{1}{\Gamma(\kappa)}u^{\kappa-1} e^{-u}\ du \\
  &\leq \frac{1}{\kappa \Gamma(\kappa)}  \left( \frac{\zeta^2 k^2}{2\rho a} \right)^\kappa \notag \\
  &\leq \theta P_k^{\Be}(\tau_0 \in [(1+2\rho)a,(1-3\rho)b]). \notag
  \end{align}
With \eqref{zetakvs0b} this shows that
\begin{equation} \label{zetakvs0c}
  P_k^{\Be}(\tau_0 \in [(1+2\rho)a,(1-3\rho)b]) \leq \frac{1}{1-\theta} P_k^{\Be}\big(\tau_{\zeta k} \in [(1+\rho)a,(1-3\rho)b] \big),
  \end{equation}
which proves the 2nd inequality in \eqref{toprove2}.

Next we consider the 3rd and 8th inequalities in \eqref{toprove2}, in which Bessel-process probabilities are compared to similar probabilities for the imbedded RW.  First, for the 8th inequality, analogously to \eqref{BesselvsBI3} we have for some $K_i = K_i(\rho)$ that
\begin{align} \label{BIvsBe1}
  P_k^{\BI}&\big(\tau_{\zeta k}^{\BI} \in [(1-\rho)a,(1+\rho)b] \big) \\
  &\leq P_k^{\Be}\big(\tau_{\zeta k}^{\Be} \in [(1-2\rho)a,(1+2\rho)b] \big) \notag \\
  &\qquad + P_k^{\Be}\big( \tau_{\zeta k}^{\BI} \in [(1-\rho)a,(1+\rho)b],
    |\tau_{\zeta k}^{\BI} - \tau_{\zeta k}^{\Be}| > \rho a \big) \notag \\
  &\leq P_k^{\Be}\big(\tau_{\zeta k}^{\Be} \in [(1-2\rho)a,(1+2\rho)b] \big) + K_{18}e^{-K_{19}a}. \notag
  \end{align}
By \eqref{mk2}, \eqref{zetakvs0c} and \eqref{Bessellower2}, there exist $K_{20} = K_{20}(\rho,\Dm,\chi)$ and $K_{21} = K_{21}(\rho,\Dm,\chi,\theta)$ 
such that for $a \geq K_{21}$,
\begin{equation} \label{expsmall}
  P_k^{\Be}\big(\tau_{\zeta k}^{\Be} \in [(1-2\rho)a,(1+2\rho)b] \big) \geq K_{20} a^{-\kappa} \geq \frac{1}{\theta}e^{-K_{19}a},
  \end{equation}
which with \eqref{BIvsBe1} and \eqref{zetakvs0} shows that 
\begin{align} \label{BIvsBe2}
  P_k^{\BI}\big(\tau_{\zeta k}^{\BI} \in [(1-\rho)a,(1+\rho)b] \big) &\leq (1+\theta)P_k^{\Be}\big(\tau_{\zeta k}^{\Be} \in [(1-2\rho)a,(1+2\rho)b] \big),
  \end{align}
so the 8th inequality in \eqref{toprove2} is proved.
For the 3rd inequality, similarly to \eqref{BIvsBe1} and \eqref{BIvsBe2} we get
\begin{equation}\label{BIvsBe3}
  P_k^{\Be}\big(\tau_{\zeta k}^{\Be} \in [(1+\rho)a,(1-3\rho)b] \big) \leq P_k^{\BI}\big( \tau_{\zeta k}^{\BI} \in [a,(1-2\rho)b] \big)
    + e^{-K_{19}a},
    \end{equation}
which together with a slight modification of \eqref{expsmall} gives
\begin{equation} \label{BIvsBe4}
  (1-\theta)P_k^{\Be}\big(\tau_{\zeta k}^{\Be} \in [(1+\rho)a,(1-3\rho)b] \big) \leq P_k^{\BI}\big( \tau_{\zeta k}^{\BI} \in [a,(1-2\rho)b] \big),
  \end{equation}
yielding the desired result.
  
Now we consider the 4th through 7th inequalities in \eqref{toprove2}, comparing probabilities for the imbedded RW to similar probabilities for the original RW, and comparing the hitting times of $(\zeta-2\beta)k$ and 0; for this we use the coupling of $\{X_n\}$ and $\{X_n^{\BI}\}$.  First, for the 4th inequality, observe that for walks starting at $k$, if $\tau_{\zeta k}^{\BI} \in [a,(1-2\rho)b]$ and the number of missteps by time $\tau_{\zeta k}^{\BI}$ is less than $\beta k$, then at time $\tau_{\zeta k}^{\BI}$, the stopping time $\tau_{(\zeta-2\beta)k}$ for the RW $\{X_n\}$ has not yet occurred and this RW is located in $((\zeta-2\beta)k,(\zeta+2\beta)k)$.  Therefore
\begin{align} \label{BIvsRW1}
  P_k^{\BI}&\big( \tau_{\zeta k}^{\BI} \in [a,(1-2\rho)b] \big) \\
  &\leq P_k^*\big( \tau_{(\zeta-2\beta)k} < \tau_{\zeta k}^{\BI} \wedge (1-2\rho)b, N(\tau_{(\zeta-2\beta)k}) \geq \beta k \big) \notag \\
  &\quad + P_k^*\big( \tau_{\zeta k}^{\BI} \in [a,(1-2\rho)b], \tau_{(\zeta-2\beta)k} > \tau_{\zeta k}^{\BI}, 
  X_{\tau_{\zeta k}^{\BI}} \in ((\zeta-2\beta)k,(\zeta+2\beta)k) \big). \notag
\end{align}
Let 
\[
  D = \{ \tau_{\zeta k}^{\BI} \in [a,(1-2\rho)b], \tau_{(\zeta-2\beta)k} > \tau_{\zeta k}^{\BI}, 
    X_{\tau_{\zeta k}^{\BI}} \in ((\zeta-2\beta)k,(\zeta+2\beta)k) \}
  \]
denote the last event in \eqref{BIvsRW1}.  When $D$ occurs, the RW $\{X_n^{\BI}\}$ reaches height $\zeta k$ at some time $l$, and when it does, the RW $\{X_n\}$ is at some height $j$ close to $\zeta k$, so $\{X_n\}$ has a high probability to reach height $(\zeta - 2\beta)k$ within an additional time $\rho b$.  More precisely, for $j \in ((\zeta-2\beta)k,(\zeta+2\beta)k)$ and $l \in [a,(1-2\rho)b]$, provided $\zeta^2/\rho\chi$ is sufficiently small (depending on $\theta$), using \eqref{Mxapprox}, \eqref{exctail}, \eqref{excdecomp2} and our assumption $a \geq \chi k^2$ we have 
\begin{align} \label{finish}
  P_k^*&\big( \tau_{(\zeta-2\beta)k} \in [a,(1-\rho)b] \mid D \cap \{\tau_{\zeta k}^{\BI} = l,X_l= j\} \big) \\
  &= P_j\big( \tau_{(\zeta-2\beta)k} \leq (1-\rho)b - l \big) \notag \\
  &\geq P_j(\tau_{(\zeta-2\beta)k} \leq \rho b \big) \notag \\
  &\geq 1 - M_j P_0(\tau_0 \geq \rho b) \notag \\
  &\geq 1-\theta. \notag
  \end{align}
Since $l,j$ are arbitrary, the same bound holds if we just condition on $D$.  From this and \eqref{BIvsRW1} we get
\begin{align} \label{BIvsRW2}
  P_k^{\BI}&\big( \tau_{\zeta k}^{\BI} \in [a,(1-2\rho)b] \big) \\
  &\leq P_k^*\big( \tau_{(\zeta-2\beta)k} < \tau_{\zeta k}^{\BI} \wedge (1-2\rho)b, N(\tau_{(\zeta-2\beta)k}) \geq \beta k \big) \notag \\
  &\quad + \frac{1}{1-\theta} P_k\big( \tau_{(\zeta-2\beta)k} \in [a,(1-\rho)b] \big). \notag
  \end{align}
Reasoning similarly to \eqref{Sbound} using \eqref{mk2}, and then using \eqref{Bessellower} and \eqref{BIvsBe4}, we get that for some $K_{22}(\zeta,\beta)$ and $K_{23}(\zeta,\beta,\theta,\Dm,\chi)$, for $a \geq K_{23}$,
\begin{align} \label{missteps}
  P_k^*&\big( \tau_{(\zeta-2\beta)k} < \tau_{\zeta k}^{\BI} \wedge (1-2\rho)b, N(\tau_{(\zeta-2\beta)k}) \geq \beta k \big) \\
  &\leq e^{-K_{22}\sqrt{a}} \notag \\
  &\leq \theta P_k^{\BI}\big( \tau_{\zeta k}^{\BI} \in [a,(1-2\rho)b] \big). \notag
  \end{align}
With \eqref{BIvsRW2} this shows that
\begin{align} \label{BevsRW1} 
    &(1-\theta)^2 P_k^{\BI}\big( \tau_{\zeta k}^{\BI} \in [a,(1-2\rho)b] \big)
      \leq P_k\big( \tau_{(\zeta-2\beta)k} \in [a,(1-\rho)b] \big),
  \end{align}
which yields the 4th inequality in \eqref{toprove2}.

For the 5th inequality in \eqref{toprove2}, from \eqref{Mxapprox}, \eqref{exctail}, \eqref{excdecomp2} and \eqref{mk2}, provided $\zeta^2/\rho\chi$ is sufficiently small (depending on $\theta$), we have
\[
  P_{(\zeta-2\beta)k}\big( \tau_0 > \rho b \big) \leq M_{(\zeta-2\beta)k}P_0(\tau_0 \geq \rho b) \leq \theta.
  \]
Hence
\begin{align} \label{0vshigher}
  (1-\theta) &P_k\big( \tau_{(\zeta-2\beta)k} \in [a,(1-\rho)b] \big) \\
  &\leq P_k\big( \tau_{(\zeta-2\beta)k} \in [a,(1-\rho)b] \big) P_{(\zeta-2\beta)k}\big( \tau_0 \leq \rho b \big) \notag \\
  &\leq P_k\big( \tau_0 \in [a,b] \big), \notag
\end{align}
which proves the 5th inequality.

Next, to prove the 7th inequality in \eqref{toprove2}, we can repeat \eqref{BIvsRW1}---\eqref{BevsRW1} with $\{X_n\}$ and $\{X_n^{\BI}\}$ interchanged, and with $\zeta k,(\zeta-2\beta)k$ replaced by $(\zeta+2\beta)k,\zeta k$, respectively, to obtain first the following analog of \eqref{BIvsRW2} and \eqref{missteps}:
\begin{align} \label{BIvsRW3}
  P_k&\big( \tau_{(\zeta+2\beta)k} \in [(1-\rho)a,b] \big) \\
  &\leq P_k^*\big( \tau_{\zeta k}^{\BI} < \tau_{(\zeta+2\beta)k} \wedge b, N(\tau_{\zeta k}^{\BI}) \geq \beta k \big) 
    + \frac{1}{1-\theta} P_k\big( (\tau_{\zeta k}^{\BI} \in [(1-\rho)a,(1+\rho)b] \big) \notag \\
  &\leq \theta P_k\big( \tau_{(\zeta+2\beta)k} \in [(1-\rho)a,b] \big) + \frac{1}{1-\theta} P_k\big( (\tau_{\zeta k}^{\BI} \in [(1-\rho)a,(1+\rho)b] \big), \notag
\end{align}
and from this the analog of \eqref{BevsRW1}:
\begin{align} \label{BevsRW3}
  P_k\big( \tau_{(\zeta+2\beta)k} \in [(1-\rho)a,b] \big)
    &\leq (1+3\theta)P_k^{\BI}\big( \tau_{\zeta k}^{\BI} \in [(1-\rho)a,(1+\rho)b] \big),
  \end{align}
so the 7th inequality is proved.  Here for the second inequality in \eqref{BIvsRW3}, analogously to \eqref{missteps}, we require a lower bound for $P_k\big( \tau_{(\zeta+2\beta)k} \in [(1-\rho)a,b] \big)$, and this follows from \eqref{Bessellower} and the inequality
\[
  P_k\big( \tau_{(\zeta-2\beta)k} \in [a,(1-\rho)b] \big) \geq \frac{1-6\theta}{1-\theta} P_k^{\Be}(\tau_0 \in [a,b])
  \]
which is contained in the first four inequalities of \eqref{toprove2}, with trivial modification to replace $\zeta-2\beta$ with $\zeta+2\beta$ and $[a,(1-\rho)b]$ with $[(1-\rho)a,b]$.

For the 6th inequality, we have
\begin{align} \label{Bevshigher2}
  P_k\big( \tau_0 \in [a,b] \big) \leq P_k\big( &\tau_{(\zeta+2\beta)k} \in [(1-\rho)a,b] \big) \\
  &+ P_k\big( \tau_{(\zeta+2\beta)k} < (1-\rho)a, \tau_0 \in [a,b] \big). \notag
\end{align}
Let us show that the last probability in \eqref{Bevshigher2} is much smaller than the first one.  The Markov property at $\tau_{(\zeta+2\beta)k}$, together with \eqref{Mxapprox}, \eqref{exctail}, \eqref{excdecomp2} and \eqref{mk2}, yields that for some $K_{24}$, provided $a$ is sufficiently large,
\begin{align} \label{smallbit}
   P_k\big( \tau_{(\zeta+2\beta)k} < (1-\rho)a, \tau_0 \in [a,b] \big) &\leq P_{(\zeta+2\beta)k}\big( \tau_0 \geq \rho a \big) \\
   &\leq M_{(\zeta+2\beta)k}P_0(\tau_0 \geq \rho a) \notag \\
   &\leq K_{24}\left( \frac{ \zeta^2k^2 }{ \rho a } \right)^\kappa. \notag
   \end{align}
From \eqref{mk2}, \eqref{Bessellower} and the first half of \eqref{toprove2} we have that for some $K_{25}$,
\[
  P_k\big( \tau_0 \in [a,b] \big) \geq (1-6\theta)P_k^{\Be}(\tau_0 \in [a,b]) \geq K_{25} \Dm 
    e^{-1/2\chi} \left( \frac{k^2}{a} \right)^\kappa.
  \]
From this and \eqref{smallbit} we obtain that provided $\zeta^2/\rho$ is sufficiently small (depending on $\Dm,\theta,\chi$), the ratio of the last to the first probability in \eqref{Bevshigher2} is at most $\theta$, which with \eqref{Bevshigher2} shows that 
\[
  (1-\theta)P_k\big( \tau_0 \in [a,b] \big) \leq P_k\big( \tau_{(\zeta+2\beta)k} \in [(1-\rho)a,b] \big),
  \]
proving the 6th inequality in \eqref{toprove2}, which completes the full proof of \eqref{toprove2}.  Statement \eqref{sandwich7} is then immediate, and then, as we have noted, \eqref{sandwich6} follows.
\end{proof}

Let us now prove Theorem \ref{hittime} for low starting heights---suppose that 
\[
  1 \leq k < \sqrt{\chi m}.
  \]
Let $a \in [m/2,m)$.
We will use Corollary \ref{latticepath2}(ii) and (iii), with $[p,q] = [a/2,a]$, together with \eqref{excpoint}.  By \eqref{excpoint}, provided $\rho$ is sufficiently small (depending on $\theta$) and then $a$ is sufficiently large, we have
\begin{align} \label{latticelower}
  P_0&\left( \tau_0 - \tau_k \in \left[ \frac{a}{2},a \right] \right)) \\
  &\geq P_0\left( \tau_k \leq \rho a, \tau_0 \in \left[ \left( \half + \rho \right)a,a \right] \right) \notag \\
  &= P_0\left( \tau_0 \in \left[ \left( \half + \rho \right)a,a \right] \right) 
    \left[ 1 - P_0\left( \tau_k > \rho a\ \bigg|\ \tau_0 \in \left[ \left( \half + \rho \right)a,a \right] \right) \right] \notag \\
  &\geq (1-\theta) P_0\left( \tau_0 \in \left[ \frac{a}{2},a \right] \right) 
    \left[ 1 - P_0\left( \tau_k > \rho a\ \bigg|\ \tau_0 \in \left[ \left( \half + \rho \right)a,a \right] \right) \right]. \notag
  \end{align}
We need an upper bound for the conditional probability on the right side of \eqref{latticelower}.  For some $K_{26},K_{27}$ we have from \eqref{Mxapprox}, \eqref{exctail}, \eqref{mk2} and Lemma \ref{lowheightlemma} that provided $\chi$ is sufficiently small (depending on $\theta,\rho$),
\begin{align} \label{slowclimb2}
  P_0&\left( \tau_k > \rho a\ \bigg|\ \tau_0 \in \left[ \left( \half + \rho \right)a,a \right] \right) \\
  &\leq \frac{ P_0( \tau_k \wedge \tau_0 > \rho a) }
    { P_0\left( \tau_0 \in \left[ \left( \half + \rho \right)a,a \right] \right) } \notag \\
  &\leq K_{26} \frac{ e^{-K_{4} \rho a/k^2} }{ M_k a^{-\kappa}L(\sqrt{a}) } \notag \\
  &\leq K_{27} \frac{L(k)}{L(\sqrt{a})} \left( \frac{a}{k^2} \right)^\kappa e^{-K_{4} \rho a/k^2} \notag \\
  &\leq \theta. \notag
  \end{align}

Now \eqref{latticelower}, \eqref{slowclimb2}, \eqref{exctail} and \eqref{excpoint} show that
\begin{align} \label{latticelower2}
  P_0\left( \tau_0 - \tau_k \in \left[ \frac{a}{2},a \right] \right)) &\geq (1-2\theta) P_0\left( \tau_0 \in \left[ \frac{a}{2},a \right] \right) \\
  &\geq (1-3\theta) P_0\left( \tau_0 \in \left[ \frac{a}{2}-1,a-1 \right] \right) \notag
  \end{align}
which with Corollary \ref{latticepath2}(ii), (iii) shows that 
\begin{equation} \label{corollaryuse}
  (1-3\theta) M_k P_0(\tau_0 = m) \leq P_k(\tau_0 = m) \leq M_k P_0(\tau_0 = m), \quad m \text{ even},
  \end{equation}
\begin{equation} \label{corollaryuse2}
  (1-3\theta) M_k P_0(\tau_0 = m-1) \leq P_k(\tau_0 = m) \leq M_k P_0(\tau_0 = m-1), \quad m \text{ odd}.
  \end{equation}
This and \eqref{excpoint} prove \eqref{hitapprox}.

Next we prove Theorem \ref{hittime} for midrange starting heights--suppose
\[
  \sqrt{m\chi} \leq k \leq \sqrt{ \frac{m}{\chi} }.
  \]
Let $\theta>0$ and let $0 < \Dm < \DM$ be as in Proposition \ref{intervalmid}.  For the first inequality in \eqref{hitapprox2} we use Corollary \ref{latticepath2}(i) and Proposition \ref{intervalmid}.  From Corollary \ref{latticepath2}(i) and Theorem \ref{tau0tail}, for $m$ large with $m-k$ even, and $0 \leq j < \Dm m$ with $m-j$ even, provided $\Dm$ is small enough (depending on $\theta$), we have
\begin{equation} \label{comparedowne}
  f_m^{(k)} \geq \frac{ f_m^{(0)} }{ f_{m-j}^{(0)} } f_{m-j}^{(k)} \geq (1-\theta) f_{m-j}^{(k)} \quad \text{if $k$ is even},
  \end{equation}
\begin{equation} \label{comparedowno}
  f_m^{(k)} \geq \frac{ f_{m-1}^{(0)} }{ f_{m-j-2}^{(0)} } f_{m-j-1}^{(k)} \geq (1-\theta) f_{m-j-1}^{(k)} \quad \text{if $k$ is odd},
  \end{equation}
so that, averaging over $j$ and applying Proposition \ref{intervalmid}, provided $\Dm$ is small enough (depending on $\chi,\theta$),
\begin{align} \label{averaging}
  P_k(\tau_0 = m) &\geq (1-2\theta) \frac{2}{\Dm m} P_k( (1-\Dm)m < \tau_0 < m) \\
  &\geq (1-3\theta) \frac{2}{\Gamma(k)m} \left( \frac{k^2}{2m} \right)^\kappa e^{-k^2/2m}. \notag
  \end{align}
For the second inequality in \eqref{hitapprox2} the proof is similar:  in place of \eqref{comparedowne} and \eqref{comparedowno} we have that for $m+j$ even,
\begin{equation} \label{compareupe}
  f_m^{(k)} \leq \frac{f_m^{(0)}}{f_{m+j}^{(0)}} f_{m+j}^{(k)} \leq (1+\theta) f_{m+j}^{(k)} \quad \text{if $k$ is even},
  \end{equation}
\begin{equation} \label{compareupo}
    f_m^{(k)} \leq \frac{f_{m-1}^{(0)}}{f_{m+j}^{(0)}} f_{m+j+1}^{(k)} \leq (1+\theta) f_{m+j+1}^{(k)} \quad \text{if $k$ is odd}.
\end{equation}
and then as with \eqref{averaging},
\begin{align} \label{averaging2}
  P_k(\tau_0=m) &\leq  (1+\theta) \frac{2}{\Dm m} P_k\big( \tau_0 \in [m,(1+\Dm)m] \big) \notag \\
  &\leq (1+3\theta)\frac{2}{\Gamma(\kappa)m} \left( \frac{k^2}{2m} \right)^\kappa e^{-k^2/2m},
\end{align}
completing the proof of \eqref{hitapprox2}.

Last, we prove Theorem \ref{hittime} for high starting heights.  We may assume $k \leq m$.  From the first inequalities in \eqref{compareupe} and \eqref{compareupo} and from Theorem \ref{tau0tail}, averaging over $j \in [0,m/8]$ we obtain that for $m$ large and $0<h<k/3$ we have
\begin{align} \label{averaging3}
  P_k(\tau_0=m) &\leq  \left( \frac{9}{8} \right)^\kappa \frac{32}{m} 
    P_k\left( \tau_0 \in \left[m,\frac{9}{8}m\right] \right) \notag \\
  &\leq \left( \frac{9}{8} \right)^\kappa \frac{32}{m} P_k\left( \tau_h \leq \frac{9}{8}m \right).
\end{align}
To bound the last probability we couple our Bessel-like RW to a symmetric simple RW.  Recall that $N(t)$ denotes the number of alarms by time $t$, and let 
\[
  N^* = \left| \left\{i \leq \frac{9}{8}m:  X_i \geq h, X_i^{\sym} \geq h, \text{and an alarm occurs at time } i \right\} \right|.
  \]
Analogously to \eqref{deviation} we have
\begin{align} \label{twoways}
  P_k\left( \tau_h \leq \frac{9}{8}m \right) &= P_k^*\left( \tau_{3h}^{\sym} > \tau_h, \tau_h \leq \frac{9}{8}m \right) 
    + P_k^*\left( \tau_{3h}^{\sym} \leq \tau_h \leq \frac{9}{8}m \right) \notag \\
  &\leq P_k^*\left( N^* > h \right) + P_k^{\sym}\left( \tau_{3h}^{\sym} \leq \frac{9}{8}m \right).
\end{align}
We now take $h= k/8$; we assume for convenience that $h$ is an integer.   If $m_0$ (and hence $k$) is large enough, then $\sup_{x \geq k/8} |p_x - \half| \leq 2(1+|\delta|)/k$.  Then $N^*$ is stochastically smaller than a Binomial($9m/8,2(1+|\delta|)/k$) random variable.  We apply Bennett's Inequality (see Hoeffding \cite{Ho63}), which states that for a Binomial($n,p$) random variable $Y$ and $\lambda > np$,
\[
  P(Y \geq \lambda) \leq e^{-\lambda\psi(np(1-p)/\lambda)},
  \]
where $\psi$ is the decreasing function
\[
  \psi(x) = (1+x)\log\left( 1 + \frac{1}{x} \right) - 1.
  \]
For $x \leq 1/4$ we have $\psi(x) \geq 1$ and hence $\psi(x) \geq \half (1+x)\log\left( 1 + \frac{1}{x} \right)\geq \half \log\frac{1}{x}$.  Therefore for $\lambda \geq 4np$, 
\[
  P(Y \geq \lambda) \leq e^{-(\lambda/2) \log(\lambda/np)},
  \]
and in particular, provided $\chi$ is sufficiently small we have
\begin{equation} \label{binombound}
  P_k^*\left( N^* > \frac{k}{8} \right) \leq e^{-(k/16) \log( k^2/18(1+|\delta|) m)} \leq e^{-k/4} \leq e^{-k^2/4m}.
  \end{equation}
Also, again provided $\chi$ is small, by Hoeffding's inequality \cite{Ho63},
\[
  P_k^{\sym}\left( \tau_{k/4}^{\sym} \leq \frac{9}{8}m \right) \leq 2P_0^{\sym}\left(X_{9m/8}^{\sym} > \frac{3}{4}k \right)
    \leq e^{-k^2/4m},
  \]
which with \eqref{averaging3}, \eqref{twoways} and \eqref{binombound} yields
\begin{equation} \label{highconcl}
  P_k(\tau_0=m) \leq  \left( \frac{9}{8} \right)^\kappa \frac{64}{m} e^{-k^2/4m} \leq \frac{1}{m} e^{-k^2/8m},
  \end{equation}
completing the proof of \eqref{hitapprox3}.\\[10 pt]

\section{Proof of Theorems \ref{distrib} and \ref{location}}

Theorem \ref{location} is a straightforward consequence of Theorem \ref{distrib}, \eqref{reverse} and \eqref{lambda}, so we prove Theorem \ref{distrib}.
We use \eqref{renewal}, applying Theorem \ref{hittime} and \eqref{finitemean3}---\eqref{c2case3} to approximate the products on the right side.  

We consider first part (i), for low starting heights, i.e. $1 \leq k < \sqrt{\chi n}$.  By \eqref{exctail}, \eqref{excpoint} and \eqref{renewal} and Theorem \ref{hittime}, given $\theta>0$, taking $\rho$ and then $\chi$ sufficiently small, for $n$ large, we have the following sandwich bound for $P_k(X_n = 0)$:
\begin{align} \label{lowstart}
  (1&-2\theta) P_0(X_{\tilde{n}} = 0) \\
  &\leq (1-\theta) \min_{(1-\rho)n \leq j \leq n} P_0(X_{\tilde{j}} = 0) \notag \\
  &\leq P_k(\tau_0 \leq \rho n) \min_{(1-\rho)n \leq j \leq n} P_0(X_{\tilde{j}} = 0) \notag \\
  &\leq P_k(X_n = 0) \notag \\
  &\leq \max_{(1-\rho)n \leq j \leq n} P_0(X_{\tilde{j}} = 0) + \sum_{0 \leq j < (1-\rho)n} P_k(\tau_0 = n-j) P_0(X_j = 0) \notag \\
  &\leq (1+\theta) P_0(X_{\tilde{n}} = 0) + \max_{0 \leq i < (1-\rho)n} P_k(\tau_0 = n-i)  \sum_{0 \leq j < (1-\rho)n} P_0(X_j = 0) \notag \\
  &\leq (1+\theta) P_0(X_{\tilde{n}} = 0) + K_{28} (\rho n)^{-(\kappa+1)}L(\sqrt{n}) M_{\sqrt{\chi n}}  \sum_{0 \leq j < (1-\rho)n} P_0(X_j = 0). \notag 
  \end{align}
We need to show that the second term on the right side of \eqref{lowstart} is small compared to the first term on the right side.  From \eqref{finitemean3}---\eqref{c2case3} we see that for some $K_{29}$, in all three cases, the sum in that second term is bounded by $K_{29}n P_0(X_{\tilde{n}} = 0)$.  Therefore, using \eqref{Mxapprox} and \eqref{excpoint}, if $\chi$ is sufficiently small (depending on $\theta,\rho$) then for large $n$, the second term is bounded above by
\[
  2K_{28}K_{29} \frac{ \chi^\kappa }{ \rho^{\kappa + 1} } P_0(X_{\tilde{n}} = 0) \leq \theta P_0(X_{\tilde{n}} = 0).
  \]
With \eqref{lowstart} this gives
\begin{equation} \label{lowstart1}
  (1-2\theta) P_0(X_{\tilde{n}} = 0) \leq P_k(X_n = 0) \leq (1+2\theta) P_0(X_{\tilde{n}} = 0),
  \end{equation}
as desired.

Next we consider part (ii), for $E_0(\tau_0)<\infty$ and midrange starting heights, $\sqrt{n\chi} \leq k \leq \sqrt{n/\chi}$.  By \eqref{finitemean3} there exists $n_1$ such that 
\[
  \frac{2-\theta}{E_0(\tau_0)} \leq P_0(X_n = 0) \leq \frac{2+\theta}{E_0(\tau_0)} \quad \text{for all even } n \geq n_1.
  \]
Let $0 < \tc < \chi$.  Then using \eqref{renewal} and Theorem \ref{hittime} (with $\tc$ in place of $\chi$), provided $\tc$ is sufficiently small, and then $n$ (and hence $k$) is sufficiently large,
\begin{align} \label{midstart1}
  P_k(X_n=0) &\geq \sum_{j=n_1}^{n-\tc k^2} P_k(\tau_0 = n-j) P_0(X_j = 0) \\
  &\geq \frac{2-\theta}{E_0(\tau_0)} \sum_{m:\tc k^2 \leq m \leq n-n_1,m-k\text{ even}} P_k(\tau_0=m) \notag \\
  &\geq \frac{2-3\theta}{E_0(\tau_0)} \sum_{m:\tc k^2 \leq m \leq n-n_1,m-k\text{ even}}
    \frac{2}{\Gamma(\kappa)m} \left( \frac{k^2}{2m} \right)^\kappa e^{-k^2/2m} \notag \\
  &\geq \frac{2-4\theta}{E_0(\tau_0)} \int_{\tc k^2}^{n-n_1} \frac{1}{\Gamma(\kappa)x} \left( \frac{k^2}{2x} \right)^\kappa e^{-k^2/2x}\ dx \notag \\
  &= \frac{2-4\theta}{E_0(\tau_0)} \int_{k^2/2(n-n_1)}^{1/2\tc} \frac{1}{\Gamma(\kappa)} u^{\kappa-1} e^{-u}\ du \notag \\
  &\geq \frac{2-5\theta}{E_0(\tau_0)} \int_{k^2/2n}^\infty \frac{1}{\Gamma(\kappa)} u^{\kappa-1} e^{-u}\ du. \notag
  \end{align}
In the other direction, we have similarly
\begin{equation} \label{midstart2}
  \sum_{j=n_1}^{n-\tc k^2} P_k(\tau_0 = n-j) P_0(X_j = 0)
     \leq \frac{2+5\theta}{E_0(\tau_0)} \int_{k^2/2n}^\infty \frac{1}{\Gamma(\kappa)} u^{\kappa-1} e^{-u}\ du.
  \end{equation}
Also similarly to \eqref{midstart1}, given $\alpha>0$ we have for sufficiently small $\tc$ that provided $n$ is large,
\begin{equation} \label{mostprob}
  P_k(\tc k^2 \leq \tau_0 \leq k^2/\tc) \geq (1-\alpha) \int_{\tc/2}^{1/2\tc} \frac{1}{\Gamma(\kappa)} u^{\kappa-1} e^{-u}\ du \geq 1-2\alpha,
  \end{equation}
so in particular, for small $\tc$,
\[
  P_k(\tau_0 < \tc k^2) \leq \theta \int_{1/2\chi}^\infty \frac{1}{\Gamma(\kappa)} u^{\kappa-1} e^{-u}\ du.
  \]
With \eqref{finitemean3}, \eqref{renewal}, \eqref{midstart2} and Theorem \ref{hittime} this gives
\begin{align} \label{midstart3}
  P_k(X_n=0) &\leq \frac{2+5\theta}{E_0(\tau_0)} \int_{k^2/2n}^\infty \frac{1}{\Gamma(\kappa)} u^{\kappa-1} e^{-u}\ du \\
  &\qquad + P_k(n-n_1 < \tau_0 \leq n) + P_k(\tau_0 < \tc k^2) \max_{n-\tc k^2/2 < j \leq n} P_0(X_j = 0) \notag \\
  &\leq \frac{2+5\theta}{E_0(\tau_0)} \int_{k^2/2n}^\infty \frac{1}{\Gamma(\kappa)} u^{\kappa-1} e^{-u}\ du \notag \\
  &\qquad + \frac{K_{30}n_1}{n} \left( \frac{k^2}{2n} \right)^\kappa e^{-k^2/2n} 
    + \frac{3\theta}{E_0(\tau_0)} \int_{1/2\chi}^\infty \frac{1}{\Gamma(\kappa)} u^{\kappa-1} e^{-u}\ du \notag \\
  &\leq \frac{2+9\theta}{E_0(\tau_0)} \int_{k^2/2n}^\infty \frac{1}{\Gamma(\kappa)} u^{\kappa-1} e^{-u}\ du, \notag
  \end{align}
which with \eqref{midstart1} proves Theorem \ref{distrib}(ii) for midrange starting heights.

Now consider part (ii) for high starting heights, $k > \sqrt{n/\chi}$.  We may assume $\theta<1$.  Analogously to \eqref{midstart2} we have using \eqref{renewal} and Theorem \ref{hittime} that
\begin{align} \label{highstart1}
  \sum_{j=n_1+1}^{n-k} P_k(\tau_0 = n-j) P_0(X_j = 0) 
    &\leq \frac{3}{E_0(\tau_0)} \sum_{m:k \leq m \leq n-n_1-1,m-k\text{ even}}
    \frac{1}{m} e^{-k^2/8m} \notag \\
  &\leq \frac{3}{E_0(\tau_0)} \frac{k+1}{k} \int_k^{n-n_1} \frac{1}{x} e^{-k^2/8x}\ dx \notag \\
  &= \frac{3}{E_0(\tau_0)} \frac{k+1}{k} \int_{k^2/8(n-n_1)}^{k/8} e^{-u}\ du \notag \\
  &\leq  \frac{4}{E_0(\tau_0)} e^{-k^2/8n}.
  \end{align}
Further, as in \eqref{lowstart}, using Theorem \ref{hittime},
\begin{align} \label{highstart2}
  \sum_{j=0}^{n_1} P_k(\tau_0 = n-j) P_0(X_j = 0) 
    &\leq \left( \max_{j \leq n_1} P_k(\tau_0 = n-j) \right) \sum_{j=0}^{n_1-1} P_0(X_j = 0) \notag \\
  &\leq \frac{n_1+1}{n} e^{-k^2/8n}.
  \end{align}
Now \eqref{renewal}, \eqref{highstart1} and \eqref{highstart2} prove \eqref{finitemean4}.

We turn now to part (iii), for $-1<\delta<1$ and midrange starting heights $\sqrt{n\chi} \leq k \leq \sqrt{n/\chi}$.  
We use the fact that
\begin{equation} \label{Psik}
  \int_0^1 \frac{1}{(1-u)^{1+\kappa} u^{1-\kappa}} e^{-a/(1-u)}\ du = \Gamma(\kappa) a^{-\kappa} e^{-a} \quad
     \text{for all } a, \kappa > 0,
  \end{equation}
as can easily be seen via the change of variable $v = (1-u)^{-1}$.
By \eqref{infinitemean3} there exists $n_2=n_2(\theta)$ such that
\begin{equation} \label{returnzero}
  \frac{(1-\theta)2^\kappa K_0}{\Gamma(1-\kappa)} n^{-(1-\kappa)} L(\sqrt{n})^{-1} \leq P_0(X_n = 0) 
    \leq \frac{(1+\theta)2^\kappa K_0}{\Gamma(1-\kappa)} n^{-(1-\kappa)} L(\sqrt{n})^{-1}  
  \end{equation}
for all even $n \geq n_2$.  Analogously to \eqref{midstart1}, provided $\tilde{\chi}/\chi$ is sufficiently small, using \eqref{renewal}, \eqref{Psik} and \eqref{returnzero} we then obtain that for large $n$,
\begin{align} \label{midstart4}
  P_k&(X_n=0) \\
  &\geq \sum_{j=n_2}^{n-\tc k^2} P_k(\tau_0 = n-j) P_0(X_j = 0) \notag \\
  &\geq \frac{(2-\theta)2^\kappa K_0}{\Gamma(\kappa)\Gamma(1-\kappa)} \sum_{n_2 \leq j \leq n-\tc k^2 \atop j\text{ even}} \frac{1}{n-j}
    \left( \frac{k^2}{2(n-j)} \right)^\kappa e^{-k^2/2(n-j)} j^{-(1-\kappa)}L(\sqrt{j})^{-1} \notag \\
  &\geq \frac{(1-\theta)2^\kappa K_0}{\Gamma(\kappa)\Gamma(1-\kappa)} L(\sqrt{n})^{-1}
    \int_{n_2}^{n-\tc k^2} \frac{1}{n-x} \left( \frac{k^2}{2(n-x)} \right)^\kappa e^{-k^2/2(n-x)} \frac{1}{x^{1-\kappa}}\ dx \notag \\
  &=  \frac{(1-\theta)2^\kappa K_0}{\Gamma(\kappa)\Gamma(1-\kappa)} n^{-(1-\kappa)} L(\sqrt{n})^{-1} \left( \frac{k^2}{2n} \right)^\kappa
    \int_{n_2/n}^{1-\tc k^2/n} \frac{1}{(1-u)^{1+\kappa} u^{1-\kappa}} e^{-k^2/2n(1-u)}\ du \notag \\
  &\geq \frac{(1-2\theta)2^\kappa K_0}{\Gamma(1-\kappa)} n^{-(1-\kappa)} L(\sqrt{n})^{-1} e^{-k^2/2n}. \notag
  \end{align}
In the other direction, analogously to \eqref{midstart2}, from a calculation similar to \eqref{midstart4} we get
\begin{equation} \label{midstart5}
  \sum_{j=n_2}^{n-\tc k^2} P_k(\tau_0 = n-j) P_0(X_j = 0) \leq 
    \frac{(1+2\theta)2^\kappa K_0}{\Gamma(1-\kappa)} n^{-(1-\kappa)} L(\sqrt{n})^{-1} e^{-k^2/2n}.
  \end{equation}
With \eqref{renewal}, \eqref{infinitemean3} and Theorem \ref{hittime} this gives the analog of \eqref{midstart3}:  provided $\tc$ is taken sufficiently small and then $n$ sufficiently large,
\begin{align} \label{midstart6}
  P_k&(X_n=0) \\
  &\leq \frac{(1+2\theta)2^\kappa K_0}{\Gamma(1-\kappa)} n^{-(1-\kappa)} L(\sqrt{n})^{-1} e^{-k^2/2n} \notag \\
  &\qquad + n_2 \max_{0 \leq j < n_2} P_k(\tau_0 = n-j) + P_k(\tau_0 < \tc k^2) \max_{n-\tc k^2 < j \leq n} P_0(X_j=0) \notag \\
  &\leq \frac{(1+2\theta)2^\kappa K_0}{\Gamma(1-\kappa)} n^{-(1-\kappa)} L(\sqrt{n})^{-1} e^{-k^2/2n} \notag \\
  &\qquad + K_{31}(\chi)  n^{-1} 
    + \theta  \frac{(1+2\theta)2^\kappa K_0}{\Gamma(1-\kappa)} n^{-(1-\kappa)} L(\sqrt{n})^{-1} \notag \\
   &\leq \frac{(1+4\theta)2^\kappa K_0}{\Gamma(1-\kappa)} n^{-(1-\kappa)} L(\sqrt{n})^{-1} e^{-k^2/2n}. \notag
\end{align}
Here the second inequality uses the fact that by \eqref{mostprob}, we can make $P_k(\tau_0 < \tc k^2)$ as small as desired by taking $\tc$ small.  Together \eqref{midstart4} and \eqref{midstart6} prove Theorem \ref{distrib}(iii) for midrange starting heights.

We turn next to part (iii) for high starting heights, $k > \sqrt{n/\chi}$.  There exists $K_{32}$ such that for $0<\alpha \leq K_{32}^2$,
\begin{equation} \label{decaysum}
  \sum_{j=1}^\infty e^{-\alpha j} \frac{1}{ j^{1-\kappa}L(\sqrt{j}) } \leq \frac{2}{\kappa} \alpha^{-\kappa} L\left( \frac{1}{\sqrt{\alpha}} \right)^{-1}.
  \end{equation}
Then analogously to \eqref{midstart1} and \eqref{midstart4}, when $k \leq K_{32}n$, using \eqref{renewal}, \eqref{returnzero} and Theorem \ref{hittime} we have for large $n$,
\begin{align} \label{highstart3}
  \sum_{j=n_2}^{n-k} &P_k(\tau_0 = n-j) P_0(X_j = 0) \notag \\
  &\leq \frac{2^{1+\kappa} K_0}{\Gamma(1-\kappa)} \sum_{j=n_2}^{n-k} 
    \frac{1}{n-j} e^{-k^2/8(n-j)} \frac{1}{ j^{1-\kappa}L(\sqrt{j}) } \notag \\
  &\leq \frac{2^{2+\kappa} K_0}{\Gamma(1-\kappa)} \bigg[ \frac{2}{n} e^{-k^2/8n} \sum_{n_2\leq j\leq n/2} e^{-k^2j/8n^2}
    \frac{1}{ j^{1-\kappa}L(\sqrt{j}) } \notag \\
  &\qquad \qquad \qquad +  \sum_{n/2<j\leq n-k} 
    \frac{1}{n-j} e^{-k^2/8(n-j)} \frac{1}{ j^{1-\kappa}L(\sqrt{j}) }\bigg] \notag \\
  &\leq \frac{2^{2+\kappa} K_0}{\Gamma(1-\kappa)} \bigg[ \frac{4\cdot 4^\kappa}{\kappa n} e^{-k^2/8n} \left( \frac{n}{k} \right)^{2\kappa}
    L\left( \frac{n}{k} \right)^{-1} +  \frac{2}{n} e^{-k^2/4n} \sum_{j=1}^n 
    \frac{1}{ j^{1-\kappa}L(\sqrt{j}) }\bigg] \notag \\
  &\leq \frac{2^{2+\kappa} K_0}{\Gamma(1-\kappa)} \bigg[ \frac{4\cdot 4^\kappa}{\kappa n} e^{-k^2/8n} \left( \frac{n}{k} \right)^{2\kappa}
    L\left( \frac{n}{k} \right)^{-1} +  \frac{4}{\kappa n} e^{-k^2/4n} n^\kappa L(\sqrt{n})^{-1} \bigg].
\end{align}
Here in the third inequality we used the fact that $(n-j)^{-1} e^{-k^2/8(n-j)}$ is a decreasing function of $j$, and the fact that $L$ is slowly varying.  Provided $\chi$ is sufficiently small, the second term inside the brackets on the right side of \eqref{highstart3} is smaller than the first term; using this, \eqref{renewal} and Theorem \ref{hittime} we obtain that for some $K_{33}(\kappa)$, provided $\chi$ is small enough,
\begin{align} \label{highstart4}
  P_k(X_n = 0) &\leq \frac{2^{4+3\kappa} K_0K_{32}}{\kappa\Gamma(1-\kappa)n} e^{-k^2/8n} \left( \frac{n}{k} \right)^{2\kappa}
    L\left( \frac{n}{k} \right)^{-1} + \sum_{0 \leq j < n_2} P_k(\tau_0 = n-j) \notag \\
  &\leq \frac{2^{4+3\kappa} K_0K_{32}}{\kappa\Gamma(1-\kappa)n} e^{-k^2/8n} \left( \frac{n}{k} \right)^{2\kappa}
    L\left( \frac{n}{k} \right)^{-1} + \frac{2n_2}{n} e^{-k^2/8n} \notag \\
  &\leq K_{33} e^{-k^2/8n} n^{-(1-\kappa)} L(\sqrt{n})^{-1}.
\end{align}
Here $n_2=n_2(1)$.  This proves \eqref{infinitemean3a} when $k \leq K_{32}n$.  If $K_{32}n<k\leq n$, in place of \eqref{highstart3} and \eqref{highstart4} we have using \eqref{hitapprox3} that
\begin{align} \label{highstart7}
  P_k(X_n = 0) &= \sum_{j=0}^{n-k} P_k(\tau_0 = n-j) P_0(X_j = 0) \notag \\
  &\leq \sum_{j=0}^{n-k} \frac{1}{n-j} e^{-k^2/8n} \notag \\
  &\leq K_{34} e^{-k^2/8n},
\end{align}
from which \eqref{infinitemean3a} follows.

Next we consider part (iv), in which $\delta=1$, $E_0(\tau_0) = \infty$, in the case of midrange starting heights $\sqrt{n\chi} \leq k \leq \sqrt{n/\chi}$.  In this case $\mu_0$ is slowly varying, and by \eqref{c2case3} there exists $n_3=n_3(\theta)$ such that 
\begin{equation} \label{returnzero2}
  \frac{2-\theta}{\mu_0(n)} \leq P_0(X_n = 0) \leq \frac{2+\theta}{\mu_0(n)} \quad \text{for all even } n \geq n_3.
  \end{equation}
Then analogously to \eqref{midstart1}, using Theorem \ref{hittime}, \eqref{renewal} and \eqref{returnzero2}, for large $n$,
\begin{align} \label{midstart7}
  P_k(X_n = 0)  &\geq \sum_{j=n_3}^{n-\tc k^2} P_k(\tau_0 = n-j) P_0(X_j = 0) \\
  &\geq \frac{4-3\theta}{\Gamma(\kappa)} \sum_{\tc k^2 \leq m \leq n-n_3 \atop n-m\text{ even}} \frac{1}{m}
    \left( \frac{k^2}{2m} \right)^\kappa e^{-k^2/2m} \frac{1}{\mu_0(n-m)} \notag \\
  &\geq \frac{2-2\theta}{\mu_0(n)} \int_{k^2/2n}^\infty \frac{1}{\Gamma(\kappa)} u^{\kappa-1} e^{-u}\ du, \notag
\end{align}
and similarly
\begin{equation} \label{midstart8}
  \sum_{j=n_3}^{n-\tc k^2} P_k(\tau_0 = n-j) P_0(X_j = 0) 
    \leq \frac{2+2\theta}{\mu_0(n)} \int_{k^2/2n}^\infty \frac{1}{\Gamma(\kappa)} u^{\kappa-1} e^{-u}\ du.
  \end{equation}
Using \eqref{renewal}, \eqref{returnzero2}, \eqref{midstart8} and Theorem \ref{hittime}, and taking $\tilde{\chi}$ sufficiently small, we obtain the analog of \eqref{midstart6}:
\begin{align} \label{midstart9}
  P_k&(X_n = 0) \\
  &\leq \frac{2+2\theta}{\mu_0(n)} \int_{k^2/2n}^\infty \frac{1}{\Gamma(\kappa)} u^{\kappa-1} e^{-u}\ du \notag \\
  &\qquad + n_3 \max_{0 \leq j < n_3} P_k(\tau_0 = n-j) + P_k(\tau_0 < \tc k^2) \max_{n-\tc k^2 < j \leq n} P_0(X_j=0) \notag \\
  &\leq \frac{2+2\theta}{\mu_0(n)} \int_{k^2/2n}^\infty \frac{1}{\Gamma(\kappa)} u^{\kappa-1} e^{-u}\ du \notag \\
  &\qquad + K_{35}(\chi) n^{-1} 
    + \frac{\theta}{\mu_0(n)} \int_{1/2\chi}^\infty \frac{1}{\Gamma(\kappa)} u^{\kappa-1} e^{-u}\ du \notag \\
  &\leq \frac{2+4\theta}{\mu_0(n)} \int_{k^2/2n}^\infty \frac{1}{\Gamma(\kappa)} u^{\kappa-1} e^{-u}\ du. \notag
\end{align}
Together \eqref{midstart7} and \eqref{midstart9} prove Theorem \ref{distrib}(iv) for midrange starting heights.

Last we consider part (iv) for high starting heights, $k > \sqrt{n/\chi}$. We may assume $\theta<1$.  When $k \leq K_{32}n$ and $k$ is sufficiently large, we have analogously to \eqref{highstart3}, using \eqref{decaysum} and Theorem \ref{hittime},
\begin{align} \label{highstart5}
  \sum_{j=n_3}^{n-k} &P_k(\tau_0 = n-j) P_0(X_j = 0) \notag \\
  &\leq \sum_{j=n_3}^{n-k} \frac{1}{n-j} e^{-k^2/8(n-j)}\frac{3}{\mu_0(j)} \notag \\
  &\leq \frac{6}{n} e^{-k^2/8n} \sum_{n_3\leq j\leq n/2} e^{-k^2j/8n^2}
    \frac{1}{ \mu_0(j) } + \frac{6}{n} e^{-k^2/4n}  \sum_{n/2<j\leq n-k} \frac{1}{ \mu_0(j) } \notag \\
  &\leq \frac{96}{n} e^{-k^2/8n}  \frac{n^2}{k^2} \mu_0\left( \frac{n^2}{k^2} \right)^{-1} + e^{-k^2/4n} \frac{6}{\mu_0(n)} \notag \\
  &\leq e^{-k^2/8n} \frac{7}{\mu_0(n)}.
\end{align}
In the last inequality we have bounded $(n^2/k^2)\mu_0(n^2/k^2)^{-1}$ by $n/96\mu_0(n)$, valid for $\chi$ sufficiently small because $n^2/k^2 \leq \chi^2 n$ and $\mu_0$ is slowly varying.  Then using \eqref{renewal} and \eqref{hitapprox3},
\begin{align} \label{highstart6}
  P_k(X_n = 0) &\leq e^{-k^2/8n} \frac{7}{\mu_0(n)} + \sum_{1 \leq j < n_3} P_k(\tau_0 = n-j) \notag \\
  &\leq e^{-k^2/8n} \frac{7}{\mu_0(n)} + \frac{n_3}{n} e^{-k^2/8n} \notag \\
  &\leq e^{-k^2/8n} \frac{8}{\mu_0(n)}.
  \end{align}
If instead $K_{32}n<k\leq n$, then \eqref{highstart7} is valid.  In fact, a look at \eqref{highconcl} shows that, by reducing $\chi$ if necessary, we can replace 8 on the right side of \eqref{highstart7} with any constant greater than 4.  Therefore in place of \eqref{highstart6} we have for large $n$ that
\begin{align} \label{highstart8}
  P_k(X_n = 0) \leq K_{34} e^{-k^2/6n} \leq \frac{1}{\mu_0(n)} e^{-k^2/8n}.
\end{align}
Thus \eqref{borderline3} holds in both cases.

\section{Acknowledgements}

The author thanks P. Baxendale, F. Dunlop, T. Huillet, J. Pitman and A. Wade for helpful discussions, and thanks a referee for multiple references and suggestions.

\end{document}